\documentclass{article}

\usepackage{arxiv}

\usepackage[utf8]{inputenc} 
\usepackage[T1]{fontenc}    
\usepackage{hyperref}       
\usepackage{url}            
\usepackage{booktabs}       
\usepackage{amsfonts}       
\usepackage{amsthm}         
\usepackage{nicefrac}       
\usepackage{microtype}      
\usepackage{lipsum}		
\usepackage{graphicx}
\usepackage{natbib}
\usepackage{doi}
\usepackage{lipsum}
\usepackage{amsfonts}
\usepackage{graphicx}
\usepackage{epstopdf,mathtools}
\usepackage{algorithmic}
\newcommand{\prox}{\mathrm{prox}}
\newcommand{\dist}{\mathrm{dist}}
\newcommand{\proj}{\mathrm{proj}}
\newcommand{\diag}{\mathrm{diag}}
\usepackage{algorithm}
\counterwithin{algorithm}{section}

\newtheorem{theorem}{Theorem}[section]
\newtheorem{lemma}[theorem]{Lemma}
\newtheorem{proposition}[theorem]{Proposition}

\newtheorem{remark}[theorem]{Remark} 
\newtheorem{definition}[theorem]{Definition}
\newtheorem{assumption}[theorem]{Assumption}

\usepackage{subcaption} 
\usepackage{amssymb} 

\title{ON THE CONVERGENCE OF DOUBLY STOCHASTIC PRIMAL-DUAL HYBRID GRADIENT METHOD}

\author{ \href{https://orcid.org/0000-0000-0000-0000}{\includegraphics[scale=0.06]{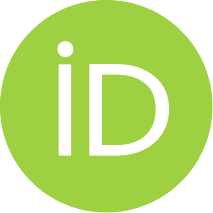}\hspace{1mm}Yiheng Xiao} \\
	Antai College of Economics and Management\\
	Shanghai Jiao Tong University\\
	\texttt{xyh051014@sjtu.edu.cn} \\
	\And
	\href{https://orcid.org/0000-0000-0000-0000}{\includegraphics[scale=0.06]{orcid.pdf}\hspace{1mm}Huikang Liu} \\
	Antai College of Economics and Management\\
	Shanghai Jiao Tong University\\
\texttt{hkl1u@sjtu.edu.cn} \\
}

\renewcommand{\shorttitle}{On the Convergence of Doubly Stochastic PDHG}

\hypersetup{
pdftitle={A template for the arxiv style},
pdfsubject={q-bio.NC, q-bio.QM},
pdfauthor={David S.~Hippocampus, Elias D.~Striatum},
pdfkeywords={First keyword, Second keyword, More},
}

\begin{document}
\maketitle

\begin{abstract}
We study a block-structured class of convex-concave saddle-point problems in which
both the primal and dual variables admit natural separable decompositions. Motivated
by large-scale applications where a full update on either side can be computationally
expensive, we propose a doubly stochastic primal--dual hybrid gradient method
(DSPDHG) that performs randomized block updates on both primal and dual variables.
The method extends classical PDHG and stochastic PDHG (SPDHG) schemes in a unified manner:
it reduces to deterministic PDHG when all blocks are selected and to one-sided
stochastic variants when only one side is randomized. For the general convex setting, we establish an $\mathcal{O}(1/K)$ ergodic convergence
rate for the expected restricted primal--dual gap under suitable blockwise step-size
conditions. We further analyze a restarted variant of DSPDHG under a  quadratic growth condition in terms of the smoothed primal-dual gap. Under this regularity
assumption, we prove linear convergence of the restarted outer iterates. Numerical evidence is provided to show that restarted DSPDHG with standard step sizes demonstrates competitive practical performance compared with PDHG, SPDHG, and their restarted variants.
\end{abstract}


\keywords{Convex optimization, primal-dual hybrid gradient, stochastic block-coordinate methods, saddle-point problems, restart, linear convergence}


\section{Introduction}

Primal--dual first-order methods have become a standard tool for solving large-scale
convex optimization problems with composite or constrained structure, particularly when accelerated with GPUs \cite{applegate2021practical,lu2023cupdlp,chen2024hpr,lu2025practical,huang2024restarted,chen2025hpr,han2024low,han2024accelerating,ding2025new,aguirre2025cuhallar,lin2025pdcs,lu2024pdot,zhang2025hot,zhang2025solving}. Among them,
the primal--dual hybrid gradient (PDHG) method \cite{chambolle2011first,chambolle2016ergodic} is particularly attractive because
each iteration only requires matrix--vector multiplications and proximal mappings,
and thus scales favorably to problems for which projections or subproblem solves are
expensive. In many applications, however, even a single full primal--dual update may
still be computationally demanding when the underlying operator is complex and the number of variables is large. 

When the primal or dual variables admit a natural block structure, a successful approach to reduce the per-iteration complexity of PDHG is  exploiting
randomization. In particular, stochastic PDHG (SPDHG) \cite{doi:10.1137/17M1134834,alacaoglu2022convergence} updates only a randomly
selected subset of dual blocks at each iteration while retaining the simplicity of PDHG, and
similar convergence guarantees under
arbitrary samplings can still be established. Existing SPDHG-type
methods, however, primarily exploit separability on the dual side. For many modern
problems, the primal variable is also block structured, and a full primal update can be
as costly as a full dual update. This naturally raises the following question: can one
randomize \emph{both} the primal and dual updates while preserving the favorable
theoretical properties of PDHG and SPDHG?

In this paper, we answer this question affirmatively. We consider the block-structured
convex optimization problem
\begin{align}\label{eq:intro-primal}
\min_{x=(x_1,\dots,x_m)}
\Biggl\{
\sum_{i=1}^n
f_i\!\left(\sum_{j=1}^m A_{ij}x_j\right)
+
\sum_{j=1}^m g_j(x_j)
\Biggr\},
\end{align}
where both the primal variable $x$ and the dual variable induced by Fenchel duality
are separable. We propose a \emph{doubly stochastic primal--dual hybrid gradient}
(DSPDHG) method, which at every iteration samples a subset of primal blocks and a
subset of dual blocks, and updates only the selected coordinates. The resulting method
can be viewed as a genuine two-sided stochastic generalization of PDHG: it reduces to
deterministic PDHG when all blocks are selected, and to singly stochastic variants when only one side is randomized.

The main technical challenge is that two independent random samplings interact through
the bilinear coupling operator. In contrast to the classical SPDHG setting, one must
control not only the stochastic dual correction but also the stochastic primal correction,
as well as the cross terms generated by the simultaneous extrapolation of both variables.
To address this difficulty, we introduce weighted Lyapunov functions adapted to the two
sampling distributions and derive one-step inequalities that connect the virtual full
updates with the actual random block updates. This allows us to establish both sublinear
and linear convergence guarantees within a unified framework.

Our main contributions are summarized as follows.

\paragraph{General convex problems}
For proper, lower semicontinuous, convex objectives and suitable blockwise step sizes,
we establish an $\mathcal{O}(1/K)$ ergodic convergence rate for the expected primal--dual
gap of DSPDHG. The result applies to general fixed-cardinality samplings on both the
primal and dual sides and extends the usual singly stochastic PDHG analysis to the
doubly stochastic regime.

\paragraph{Linear convergence under regularity}
We further show that, under a smoothed quadratic growth condition \cite{fercoq2022quadratic, lu2025practical,lu2024restarted,liu2025pdhcgscalablefirstordermethod} on a smoothed
primal--dual gap functional, a restarted version of DSPDHG converges linearly.
This condition is weaker and more flexible than global strong convexity--strong concavity,
and is tailored to the primal--dual geometry of the method.

\paragraph{A unified two-sided stochastic framework}
Our analysis makes explicit how the primal sampling probabilities, the dual sampling
probabilities, and the block interaction structure of the operator jointly determine the
admissible step sizes and convergence behavior. In particular, the deterministic PDHG
and one-sided stochastic PDHG schemes arise as special cases of our framework.

\smallskip
\textbf{Related work.}
A closely related work is the doubly stochastic primal-dual coordinate method proposed in \cite{yu2015doubly} for bilinear saddle-point problems, where linear convergence was established under strong convexity--concavity. However, the extrapolation parameter in that method depends explicitly on the strong convexity parameters. Another related method, proposed in \cite{alacaoglu2020random} for problems with a sparse coupling operator $A$, is also formulated as a primal--dual coordinate descent scheme. Its randomness, however, is induced by a single sampling mechanism and is therefore closer in spirit to a singly stochastic method. In contrast, our approach is developed for general convex saddle-point problems, employs genuinely separate samplings on the primal and dual sides, and yields both an $\mathcal{O}(1/K)$ ergodic convergence guarantee and linear convergence of the restarted scheme under a smoothed quadratic growth condition.

The rest of the paper is organized as follows. In section~\ref{sec:prelim}, we introduce
notation, the saddle-point formulation, and the general assumption used throughout
the paper. Section~\ref{sec:algorithm} presents the DSPDHG algorithm and the basic
one-step descent estimate. Section~\ref{sec:general-convex} proves the ergodic
$\mathcal{O}(1/K)$ convergence rate in the general convex setting. Section~\ref{sec:linear}
establishes linear convergence of the restarted scheme under the smoothed quadratic
growth condition. Numerical experiments are reported in section~\ref{sec:numerics}.

\section{Preliminaries}\label{sec:prelim}

\subsection{Problem formulation and saddle-point representation}

Let $\mathcal{X}_j$ $(j\in[m])$ and $\mathcal{Y}_i$ $(i\in[n])$ be finite-dimensional
Euclidean spaces, and define
\[
\mathcal{X}:=\prod_{j=1}^m \mathcal{X}_j,
\qquad
\mathcal{Y}:=\prod_{i=1}^n \mathcal{Y}_i,
\qquad
\mathcal{Z}:=\mathcal{X}\times\mathcal{Y}.
\]
For $x\in\mathcal{X}$ and $y\in\mathcal{Y}$, we write
\[
x=(x_1,\dots,x_m),\qquad y=(y_1,\dots,y_n),
\]
with $x_j\in\mathcal{X}_j$ and $y_i\in\mathcal{Y}_i$. Throughout the paper, we equip
all product spaces with the canonical inner products and their induced norms.

We consider the structured convex optimization problem
\begin{align}\label{eq:primal-problem}
\min_{x\in\mathcal{X}}
\left\{
\sum_{i=1}^n
f_i\!\left(\sum_{j=1}^m A_{ij}x_j\right)
+
\sum_{j=1}^m g_j(x_j)
\right\},
\end{align}
where each $f_i:\mathcal{Y}_i\to\mathbb{R}\cup\{+\infty\}$ and
$g_j:\mathcal{X}_j\to\mathbb{R}\cup\{+\infty\}$ is proper, lower semicontinuous, and
convex, and each $A_{ij}:\mathcal{X}_j\to\mathcal{Y}_i$ is linear. For each $i \in [n]$, let $f_i^*: \mathcal{Y}_i\to\mathbb{R}\cup\{+\infty\}$ denote the Fenchel conjugate of $f_i$. For convenience, define
\[
g(x):=\sum_{j=1}^m g_j(x_j),\qquad
f^*(y):=\sum_{i=1}^n f_i^*(y_i),
\]
and let $A:\mathcal{X}\to\mathcal{Y}$ denote the block operator
\[
(Ax)_i=\sum_{j=1}^m A_{ij}x_j,\qquad i\in[n].
\]
We also use the row and column block operators
\[
A_i:=(A_{i1},\dots,A_{im}):\mathcal{X}\to\mathcal{Y}_i,
\qquad
A^j :=
\begin{bmatrix}
A_{1j}\\ \vdots\\ A_{nj}
\end{bmatrix}
:\mathcal{X}_j\to\mathcal{Y}.
\]

By Fenchel duality, \eqref{eq:primal-problem} admits the saddle-point reformulation
\begin{align}\label{eq:saddle-problem}
\min_{x\in\mathcal{X}} \max_{y\in\mathcal{Y}}
\Phi(x,y)
:=
g(x)
+\sum_{i=1}^n \langle A_i x,y_i\rangle
-f^*(y).
\end{align}
We impose the following standard assumptions throughout the paper.

\begin{assumption}\label{ass:basic}
The functions $\{f_i\}_{i=1}^n$ and $\{g_j\}_{j=1}^m$ are proper, lower semicontinuous,
and convex. The saddle-point set $\mathcal{Z}^\star$ of \eqref{eq:saddle-problem} is
nonempty.
\end{assumption}
A pair $z^\star=(x^\star,y^\star)\in\mathcal{Z}$ is a saddle point of
\eqref{eq:saddle-problem} if and only if it satisfies the KKT system
\begin{align}\label{eq:kkt}
0\in
\begin{bmatrix}
\partial g(x^\star)+A^\top y^\star\\[1mm]
\partial f^*(y^\star)-Ax^\star
\end{bmatrix}.
\end{align}
Equivalently,
\[
-(A^{j})^\top y^\star \in \partial g_j(x_j^\star), \quad j\in [m]
\qquad
A_i x^\star \in \partial f_i^*(y_i^\star),\quad i\in[n].
\]

\subsection{Proximal mappings and block samplings}

For a proper, lower semicontinuous, convex function $h$ on a Euclidean space
$\mathcal{U}$ and a positive definite self-adjoint operator $M$, we define the proximal
mapping
\[
\prox_{M h}(u)
:=
\arg\min_{v\in\mathcal{U}}
\left\{
h(v)+\frac12\|v-u\|_{M^{-1}}^2
\right\},
\]
where $\|w\|_M^2:=\langle Mw,w\rangle$. Let $\tau_j>0$ for $j\in[m]$ and $\sigma_i>0$ for $i\in[n]$, and define the block-diagonal
operators
\[
\tau:=\diag(\tau_1 I,\dots,\tau_m I),\qquad
\sigma:=\diag(\sigma_1 I,\dots,\sigma_n I).
\]
The doubly stochastic algorithm uses two random block samplings at iteration $k+1$:
a primal sampling $S^{k+1}\subseteq[m]$ and a dual sampling $T^{k+1}\subseteq[n]$.
We write
\[
p_j:=\mathbb{P}(j\in S^{k+1}),\qquad
q_i:=\mathbb{P}(i\in T^{k+1}),
\]
and assume that these probabilities are strictly positive. Associated with the sampling
probabilities, we define
\[
P:=\diag(p_1 I,\dots,p_m I),
\qquad
Q:=\diag(q_1 I,\dots,q_n I).
\]

Given the extrapolated iterates $\bar y^k$ and $\bar x^{k+1}$, it is convenient to
introduce the corresponding \emph{virtual full updates}
\begin{align}\label{eq:virtual-full-updates}
\hat x_j^{k+1}
=
\prox_{\tau_j g_j}\!\bigl(x_j^k-\tau_j (A^j)^\top \bar y^k\bigr),
\qquad
\hat y_i^{k+1}
=
\prox_{\sigma_i f_i^*}\!\bigl(y_i^k+\sigma_i A_i\bar x^{k+1}\bigr),
\end{align}
for all $j\in[m]$ and $i\in[n]$. 
The actual algorithm updates only the coordinates in $S^{k+1}$ and $T^{k+1}$; the
operators $P^{-1}$ and $Q^{-1}$ then appear naturally in the extrapolation formulas
and in the unbiased representation of the stochastic iterates.

\subsection{Weighted metrics and primal--dual gap functionals}

The analysis is carried out in the weighted norm
\begin{align}\label{eq:Vnorm}
\|z\|_V^2
:=
\|x\|_{\tau^{-1}P^{-1}}^2+\|y\|_{\sigma^{-1}Q^{-1}}^2,
\qquad z=(x,y)\in\mathcal{Z}.
\end{align}
We also use the primal--dual gap kernel
\begin{align}\label{eq:H-def}
H(\bar x,\bar y,x, y)=H(\bar z,z)
:=
\Phi(\bar x,y)-\Phi(x,\bar y),
\end{align}
which is nonnegative when $z$ is a saddle point. For a set
$B\subseteq\mathcal{Z}$, the restricted primal--dual gap is defined by
\begin{align}\label{eq:gap-B}
G_B(\bar x,\bar y)
:=
\sup_{z=(x,y)\in B} H(\bar x,\bar y,x, y).
\end{align}
This quantity will be used to state the ergodic $\mathcal{O}(1/K)$ convergence result
in the general convex setting.

To capture the linear convergence regime, we further define the smoothed gap
functional
\begin{align}\label{eq:smoothed-gap}
G_\mu(\bar z,\dot z)
:=
\sup_{z\in\mathcal{Z}}
\left\{
H(\bar x,\bar y,x, y)-\frac{\mu}{2}\|z-\dot z\|_V^2
\right\},
\qquad
\bar z=(\bar x,\bar y),\ \dot z\in\mathcal{Z},
\end{align}
where $\mu>0$ is a smoothing parameter.

\section{The DSPDHG Algorithm}\label{sec:algorithm}

We now present the doubly stochastic primal--dual hybrid gradient method. In contrast
to classical SPDHG, which randomizes only the dual update, the proposed method
randomizes \emph{both} the primal and dual block updates. This is particularly useful
when both sides of the saddle-point formulation admit a block decomposition and full
updates on either side are expensive.

The method maintains both the current iterates $(x^k,y^k)$ and the extrapolated
variables $(\bar x^{k},\bar y^{k})$. Given $(x^k, \bar y^k)$, the primal update is first
performed on the sampled primal coordinates, followed by an extrapolation in the primal
variable. Then the dual update is carried out on the sampled dual coordinates, followed
by an extrapolation in the dual variable. 

\begin{algorithm}[h]
\caption{Doubly stochastic primal--dual hybrid gradient (DSPDHG)}
\label{alg:DSPDHG}
\begin{algorithmic}[1]
\STATE \textbf{Input:} stepsizes $\{\tau_j\}_{j=1}^m$, $\{\sigma_i\}_{i=1}^n$, initial points
$x^0\in\mathcal X$, $y^0\in\mathcal Y$, and set $\bar y^0=y^0$
\FOR{$k=0,1,2,\dots$}
    \STATE Draw a primal block set $S^{k+1}\subseteq[m]$
    \STATE
    Update  $x_j^{k+1}=
        \begin{cases}
        \prox_{\tau_j g_j}\!\bigl(x_j^k-\tau_j (A^j)^\top \bar y^k\bigr), & j\in S^{k+1},\\
        x_j^k, & j\notin S^{k+1}
        \end{cases}$
    \STATE
    Set $\bar x^{k+1}=x^{k+1}+ (P^{-1} - I)(x^{k+1}-x^k)$
    \STATE Draw a dual block set $T^{k+1}\subseteq[n]$
\STATE
        Update $y_i^{k+1}=
        \begin{cases}
        \prox_{\sigma_i f_i^*}\!\bigl(y_i^k+\sigma_i A_i\bar x^{k+1}\bigr), & i\in T^{k+1},\\
        y_i^k, & i\notin T^{k+1}
        \end{cases}$
    \STATE
    Set $\bar y^{k+1}=y^{k+1}+Q^{-1}(y^{k+1}-y^k)$
    \STATE
\ENDFOR
\end{algorithmic}
\end{algorithm}

The intuition behind these two extrapolations is as follows.
The full updates $\hat x^{k+1}$ and $\hat y^{k+1}$ given by \eqref{eq:virtual-full-updates} are not computed in practice, but it is easy to see that
\[
\begin{aligned}
    \hat x^{k+1} & =P^{-1}\mathbb E[x^{k+1}\mid \mathcal{F}_k]+(I-P^{-1})x^k, \\
    \hat y^{k+1} &=Q^{-1}\mathbb E[y^{k+1} \mid \mathcal{F}_{k+\frac{1}{2}}]+(I-Q^{-1})y^k.
\end{aligned}
\]
where $\mathcal F_k$ and $\mathcal F_{k+\frac{1}{2}}$ are the sigma-fields generated by $\{S^1,\cdots, S^k,T^1,\cdots,T^k\}$ and  $\{S^1,\cdots, S^{k+1},T^1,\cdots,T^k\}$, respectively. 
Therefore, the actual stochastic updates can be
represented as unbiased blockwise realizations of \eqref{eq:virtual-full-updates} after
rescaling by $P^{-1}$ and $Q^{-1}$, i.e.,
\begin{align}
    \mathbb{E}[\bar x^{k+1} \mid \mathcal{F}_k] = \hat x^{k+1}, \quad \text{and} \quad \mathbb{E}[\bar y^{k+1}\mid \mathcal{F}_{k+\frac{1}{2}}] = \hat y^{k+1} + \mathbb{E}[y^{k+1} - y^k\mid\mathcal{F}_{k+\frac{1}{2}}]
    \label{eq 3.2}
\end{align}
where the last term $\mathbb{E}[y^{k+1} - y^k\mid\mathcal{F}_{k+\frac{1}{2}}]$ works as the extrapolation in the dual variables of the standard PDHG method.

Our convergence analysis follows the standard PDHG philosophy
of deriving a one-step estimate from the proximal optimality conditions, but the doubly
stochastic setting requires additional correction terms to account for the mismatch
between virtual full updates and realized random block updates.

\subsection{A one-step estimate}

Recall the primal--dual gap kernel
\[
H(\bar x,\bar y,x,y):=\Phi(\bar x,y)-\Phi(x,\bar y).
\]
The next lemma is a one-step estimate for the virtual full updates. It is obtained by
applying the proximal optimality condition separately to the primal and dual virtual
updates.

\begin{lemma}\label{lem:virtual-onestep}
Suppose Assumption~\ref{ass:basic} holds. For any $k\ge 0$ and any
$(x,y)\in\mathcal Z$, the virtual full updates satisfy
\begin{align}
    \begin{aligned}
        H(\hat x^{k+1},\hat y^{k+1},x,y)
        \le\;&
\langle A\hat x^{k+1},y\rangle
-\langle Ax,\hat y^{k+1}\rangle\\
&+\langle A(x-\hat x^{k+1}),\bar y^k\rangle
+\langle A\bar x^{k+1},\hat y^{k+1}-y\rangle\\
&+\frac12\|x^k-x\|_{\tau^{-1}}^2
-\frac12\|\hat x^{k+1}-x^k\|_{\tau^{-1}}^2
-\frac12\|\hat x^{k+1}-x\|_{\tau^{-1}}^2\\
&+\frac12\|y^k-y\|_{\sigma^{-1}}^2
-\frac12\|\hat y^{k+1}-y^k\|_{\sigma^{-1}}^2
-\frac12\|\hat y^{k+1}-y\|_{\sigma^{-1}}^2 .
    \end{aligned}
    \label{eq:virtual-one-step}
\end{align}
\end{lemma}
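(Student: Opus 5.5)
The plan is to derive one inequality for the primal prox step and one for the dual prox step, then add them. The only inputs are the prox optimality conditions, convexity, and the three-point identity.

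\emph{Primal step.} Since $\hat x_j^{k+1}=\prox_{\tau_j g_j}(x_j^k-\tau_j (A^j)^\top\bar y^k)$, optimality gives
\[
\tau_j^{-1}(x_j^k-\hat x_j^{k+1})-(A^j)^\top\bar y^k\in\partial g_j(\hat x_j^{k+1}).
\]
Applying the subgradient inequality at an arbitrary $x_j$ and summing over $j\in[m]$ yields
\[
g(\hat x^{k+1})-g(x)\le \langle \tau^{-1}(x^k-\hat x^{k+1}),\hat x^{k+1}-x\rangle+\langle A(x-\hat x^{k+1}),\bar y^k\rangle .
\]
Here $\sum_j\langle (A^j)^\top\bar y^k,x_j-\hat x_j^{k+1}\rangle=\langle A(x-\hat x^{k+1}),\bar y^k\rangle$ is used. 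Expanding the first inner product with the three-point identity
\[
2\langle a-b,b-c\rangle_M=\|a-c\|_M^2-\|a-b\|_M^2-\|b-c\|_M^2,
\]
taking $a=x^k$, $b=\hat x^{k+1}$, $c=x$ and $M=\tau^{-1}$, produces exactly the three $\tau^{-1}$-norm terms in \eqref{eq:virtual-one-step}.

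\emph{Dual step.} In the same way, $\hat y_i^{k+1}=\prox_{\sigma_i f_i^*}(y_i^k+\sigma_iA_i\bar x^{k+1})$ gives
\[
\sigma_i^{-1}(y_i^k-\hat y_i^{k+1})+A_i\bar x^{k+1}\in\partial f_i^*(\hat y_i^{k+1}).
\]
Summing over $i$ yields
\[
f^*(\hat y^{k+1})-f^*(y)\le \langle \sigma^{-1}(y^k-\hat y^{k+1}),\hat y^{k+1}-y\rangle+\langle A\bar x^{k+1},\hat y^{k+1}-y\rangle .
\]
The same three-point identity, now with $M=\sigma^{-1}$, turns the first term into the three $\sigma^{-1}$-norm terms.

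\emph{Combining.} Expanding the definition of $H$ gives
\[
H(\hat x^{k+1},\hat y^{k+1},x,y)=g(\hat x^{k+1})-g(x)+f^*(\hat y^{k+1})-f^*(y)+\langle A\hat x^{k+1},y\rangle-\langle Ax,\hat y^{k+1}\rangle .
\]
Adding the primal and dual inequalities and then adding the two bilinear terms to both sides gives \eqref{eq:virtual-one-step} exactly.

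No real obstacle is expected. The only care needed is sign bookkeeping for the dual prox, which concerns $+f^*$ in the minimization defining the prox, and checking that the blockwise sums reassemble into $\langle A(x-\hat x^{k+1}),\bar y^k\rangle$ and $\langle A\bar x^{k+1},\hat y^{k+1}-y\rangle$ via the row and column block operators. Assumption~\ref{ass:basic} is used only to guarantee that the proxes are well defined and that the subdifferential characterization holds. The inequality is trivial if $g(x)$ or $f^*(y)$ is $+\infty$; in that case $H=-\infty$ or the right-hand side is interpreted accordingly.
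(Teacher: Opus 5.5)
Your proposal is correct and follows essentially the same route as the paper: the paper's ``standard proximal inequality'' is exactly your subgradient inequality from the prox optimality condition. Like you, it then applies a three-point identity blockwise on the primal and dual sides, sums over blocks, and adds the bilinear terms coming from the definition of $H$.
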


\begin{proof}
We use the standard proximal inequality: if
\(\hat u=\prox_{\mu h}(u)\), then
\[
h(v)-h(\hat u)
\ge
\langle \mu^{-1}(u-\hat u),v-\hat u\rangle ,
\qquad \forall v .
\]
Applying this inequality to
\[
\hat x_j^{k+1}
=
\prox_{\tau_j g_j}
\bigl(x_j^k-\tau_j(A^j)^\top \bar y^k\bigr),
\]
and using the identity
\[
2\langle a-b,c-b\rangle
=
\|a-b\|^2+\|c-b\|^2-\|a-c\|^2,
\]
we obtain, for each \(j\in[m]\),
\begin{align*}
g_j(\hat x_j^{k+1})-g_j(x_j)
\le\;&
\langle A^j(x_j-\hat x_j^{k+1}),\bar y^k\rangle  \\
&+\frac{1}{2\tau_j}\|x_j^k-x_j\|^2
-\frac{1}{2\tau_j}\|\hat x_j^{k+1}-x_j^k\|^2
-\frac{1}{2\tau_j}\|\hat x_j^{k+1}-x_j\|^2 .
\end{align*}
Summing over \(j\) gives
\begin{align}
g(\hat x^{k+1})-g(x)
\le\;&
\langle A(x-\hat x^{k+1}),\bar y^k\rangle
+\frac12\|x^k-x\|_{\tau^{-1}}^2
\nonumber\\
&-\frac12\|\hat x^{k+1}-x^k\|_{\tau^{-1}}^2
-\frac12\|\hat x^{k+1}-x\|_{\tau^{-1}}^2 .
\label{eq:x-prox-bound}
\end{align}

Similarly, applying the same proximal inequality to
\[
\hat y_i^{k+1}
=
\prox_{\sigma_i f_i^*}
\bigl(y_i^k+\sigma_i A_i\bar x^{k+1}\bigr),
\]
we obtain, after summing over \(i\in[n]\),
\begin{align}
f^*(\hat y^{k+1})-f^*(y)
\le\;&
\langle A\bar x^{k+1},\hat y^{k+1}-y\rangle
+\frac12\|y^k-y\|_{\sigma^{-1}}^2
\nonumber\\
&-\frac12\|\hat y^{k+1}-y^k\|_{\sigma^{-1}}^2
-\frac12\|\hat y^{k+1}-y\|_{\sigma^{-1}}^2 .
\label{eq:y-prox-bound}
\end{align}

Finally, by the definition of the gap kernel, we have
\[
H(\hat x^{k+1},\hat y^{k+1},x,y)
=
\langle A\hat x^{k+1},y\rangle
-\langle Ax,\hat y^{k+1}\rangle
+g(\hat x^{k+1})-g(x)
+f^*(\hat y^{k+1})-f^*(y).
\]
Substituting \eqref{eq:x-prox-bound} and \eqref{eq:y-prox-bound} into the above
identity yields \eqref{eq:virtual-one-step}.
\end{proof}

The main difficulty is to convert \eqref{eq:virtual-one-step}, which is written in
terms of the virtual full updates, into an inequality involving the actual stochastic
iterates. For fixed $(x,y)\in\mathcal Z$, define
\begin{align}\label{eq:Gamma-k}
\begin{aligned}
\Gamma_k
:=\;&
\langle A(\bar x^{k+1}-x),Q^{-1}(y^{k+1}+y^{k-1}-2y^k)\rangle\\
&+\frac12\|x^k-x\|_{\tau^{-1}P^{-1}}^2
-\frac12\|x^{k+1}-x^k\|_{\tau^{-1}P^{-1}}^2
-\frac12\|x^{k+1}-x\|_{\tau^{-1}P^{-1}}^2\\
&+\frac12\|y^k-y\|_{\sigma^{-1}Q^{-1}}^2
-\frac12\|y^{k+1}-y^k\|_{\sigma^{-1}Q^{-1}}^2
-\frac12\|y^{k+1}-y\|_{\sigma^{-1}Q^{-1}}^2 .
\end{aligned}
\end{align}
Here and below we use the convention $y^{-1}=y^0$.

\begin{lemma}\label{lem:Gamma-bound}
Suppose Assumption~\ref{ass:basic} holds. For all $k\ge 0$ and all
$(x,y)\in\mathcal Z$,
\begin{align}\label{eq:Gamma-bound}
\mathbb E\!\left[
H(\hat x^{k+1},\hat y^{k+1},x,y)
\right]
\le
\mathbb E[\Gamma_k].
\end{align}
\end{lemma}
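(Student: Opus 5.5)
Starting from Lemma~\ref{lem:virtual-onestep}, we convert each term on its right-hand side into an expression in the realized iterates by conditional expectation. The key identities are the standard sampling identities. Since $x_j^{k+1}=\hat x_j^{k+1}$ for $j\in S^{k+1}$ and $x_j^{k+1}=x_j^k$ otherwise, for any $\mathcal F_k$-measurable $u$ we have
\[
\mathbb E\bigl[\|x^{k+1}-u\|_{\tau^{-1}P^{-1}}^2\mid\mathcal F_k\bigr]
=\|\hat x^{k+1}-u\|_{\tau^{-1}}^2+\|x^k-u\|_{\tau^{-1}(P^{-1}-I)}^2 .
\]
Taking $u=x$ and $u=x^k$ gives
\[
-\tfrac12\|\hat x^{k+1}-x^k\|_{\tau^{-1}}^2-\tfrac12\|\hat x^{k+1}-x\|_{\tau^{-1}}^2+\tfrac12\|x^k-x\|_{\tau^{-1}}^2
=\mathbb E\bigl[\tfrac12\|x^k-x\|_{\tau^{-1}P^{-1}}^2-\tfrac12\|x^{k+1}-x^k\|_{\tau^{-1}P^{-1}}^2-\tfrac12\|x^{k+1}-x\|_{\tau^{-1}P^{-1}}^2\mid\mathcal F_k\bigr].
\]
The same computation with $T^{k+1}$ and conditioning on $\mathcal F_{k+\frac12}$ (note $\hat y^{k+1}$, $y^k$ and $\bar x^{k+1}$ are $\mathcal F_{k+\frac12}$-measurable) turns the $y$-norm terms into the $\sigma^{-1}Q^{-1}$ terms of $\Gamma_k$. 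Since $y$ is fixed, no measurability issue arises for the comparison point.

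Next we handle the bilinear terms. Collecting them, the right-hand side of \eqref{eq:virtual-one-step} contains
\[
\langle A\hat x^{k+1},y\rangle-\langle Ax,\hat y^{k+1}\rangle+\langle A(x-\hat x^{k+1}),\bar y^k\rangle+\langle A\bar x^{k+1},\hat y^{k+1}-y\rangle .
\]
We replace $\hat x^{k+1}$ by $\mathbb E[\bar x^{k+1}\mid\mathcal F_k]$, using \eqref{eq 3.2}. Because $\bar y^k$, $x$ and $y$ are $\mathcal F_k$-measurable, this term equals $\mathbb E[\langle A\bar x^{k+1},y-\bar y^k\rangle+\langle Ax,\bar y^k\rangle\mid\mathcal F_k]$. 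Combining with the remaining terms, the total becomes
\[
\langle A(\bar x^{k+1}-x),\hat y^{k+1}-\bar y^k\rangle,
\]
after taking expectations. Indeed, summing gives $\langle A\bar x^{k+1},\hat y^{k+1}-\bar y^k\rangle-\langle Ax,\hat y^{k+1}-\bar y^k\rangle$. We then write $\hat y^{k+1}-y^k=Q^{-1}\mathbb E[y^{k+1}-y^k\mid\mathcal F_{k+\frac12}]$, and use $\bar y^k-y^k=Q^{-1}(y^k-y^{k-1})$, which holds for $k\ge1$ by the algorithm and for $k=0$ by the convention $y^{-1}=y^0$. This gives
\[
\hat y^{k+1}-\bar y^k=\mathbb E\bigl[Q^{-1}(y^{k+1}-2y^k+y^{k-1})\mid\mathcal F_{k+\frac12}\bigr].
\]
Since $A(\bar x^{k+1}-x)$ is $\mathcal F_{k+\frac12}$-measurable, we may move it inside this conditional expectation. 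This produces exactly the first term of $\Gamma_k$.

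Finally, we take total expectations and use the tower property to combine all the pieces, which yields \eqref{eq:Gamma-bound}; in fact we obtain equality on the right-hand side bound. The main obstacle is bookkeeping of measurability. The primal replacement must be done under $\mathcal F_k$, whereas the dual replacement requires $\mathcal F_{k+\frac12}$. The cross term $\langle A\bar x^{k+1},\cdot\rangle$ involves the random $\bar x^{k+1}$, so it cannot be treated under $\mathcal F_k$, and must be rewritten only after conditioning on $\mathcal F_{k+\frac12}$. We will also verify that $\langle A\hat x^{k+1},\cdot\rangle$ terms are paired only with $\mathcal F_k$-measurable vectors, namely $y$ and $\bar y^k$, before substituting $\bar x^{k+1}$. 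This check is what makes the cancellation to $\langle A(\bar x^{k+1}-x),\hat y^{k+1}-\bar y^k\rangle$ legitimate.
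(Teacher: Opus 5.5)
Your proposal is correct and follows essentially the same route as the paper: you match the quadratic terms through the blockwise sampling identity (conditioning on $\mathcal F_k$ for the primal part and on $\mathcal F_{k+\frac12}$ for the dual part), and you use $\mathbb E[\bar x^{k+1}\mid\mathcal F_k]=\hat x^{k+1}$ to collapse the bilinear terms to $\langle A(\bar x^{k+1}-x),\hat y^{k+1}-\bar y^k\rangle$, which the $\mathcal F_{k+\frac12}$-measurability of $\bar x^{k+1}$ turns into the $Q^{-1}(y^{k+1}+y^{k-1}-2y^k)$ term of $\Gamma_k$. As in the paper, the argument implicitly needs $S^{k+1}$ and $T^{k+1}$ to be independent of the past, so that $p_j$ and $q_i$ are also the conditional inclusion probabilities; this independence is only stated in Assumption~\ref{ass:stepsizes-general}.
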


\begin{proof}
Recall that $\mathcal F_k$ and $\mathcal F_{k+\frac{1}{2}}$ are the sigma-fields generated by $S^1,\cdots, S^k,$ $T^1,\cdots,T^k$ and  $S^1,\cdots, S^{k+1},T^1,\cdots,T^k$, respectively. By the definitions of the stochastic updates and extrapolations,
\[
\bar x^{k+1}=x^k+P^{-1}(x^{k+1}-x^k),
\qquad
\bar y^k=y^k+Q^{-1}(y^k-y^{k-1}),
\]
and hence
\begin{align}\label{eq:unbiased-x}
\mathbb E[\bar x^{k+1}\mid \mathcal F_k]=\hat x^{k+1}.
\end{align}
Similarly, since
\[
\mathbb E[y^{k+1}-y^k\mid \mathcal F_{k+\frac{1}{2}}]
=
Q(\hat y^{k+1}-y^k),
\]
we have
\begin{align}\label{eq:unbiased-y}
\hat y^{k+1}
=
y^k+Q^{-1}
\mathbb E[y^{k+1}-y^k\mid \mathcal F_{k+\frac{1}{2}}].
\end{align}

We first compare the quadratic terms. Define $\hat{\mathcal{Q}}_k$ and $\mathcal{Q}_k$ to be the quadratic terms in \eqref{eq:virtual-one-step} and \eqref{eq:Gamma-k} respectively.
\begin{align}
    \begin{aligned}
        \hat{\mathcal{Q}}_k: 
        =& \frac12\|x^k-x\|_{\tau^{-1}}^2
-\frac12\|\hat x^{k+1}-x^k\|_{\tau^{-1}}^2
-\frac12\|\hat x^{k+1}-x\|_{\tau^{-1}}^2\\
&+\frac12\|y^k-y\|_{\sigma^{-1}}^2
-\frac12\|\hat y^{k+1}-y^k\|_{\sigma^{-1}}^2
-\frac12\|\hat y^{k+1}-y\|_{\sigma^{-1}}^2,\\
\mathcal{Q}_k:
=&\frac12\|x^k-x\|_{\tau^{-1}P^{-1}}^2
-\frac12\|x^{k+1}-x^k\|_{\tau^{-1}P^{-1}}^2
-\frac12\|x^{k+1}-x\|_{\tau^{-1}P^{-1}}^2\\
&+\frac12\|y^k-y\|_{\sigma^{-1}Q^{-1}}^2
-\frac12\|y^{k+1}-y^k\|_{\sigma^{-1}Q^{-1}}^2
-\frac12\|y^{k+1}-y\|_{\sigma^{-1}Q^{-1}}^2
    \end{aligned}
    \label{Q_k_def}
\end{align}
For the primal part, the blockwise update rule
implies
\begin{align}
&\mathbb E\!\left[
\frac12\|x^k-x\|_{\tau^{-1}P^{-1}}^2
-\frac12\|x^{k+1}-x^k\|_{\tau^{-1}P^{-1}}^2
-\frac12\|x^{k+1}-x\|_{\tau^{-1}P^{-1}}^2
\,\middle|\,\mathcal F_k
\right]
\nonumber\\
&\qquad =
\frac12\|x^k-x\|_{\tau^{-1}}^2
-\frac12\|\hat x^{k+1}-x^k\|_{\tau^{-1}}^2
-\frac12\|\hat x^{k+1}-x\|_{\tau^{-1}}^2 .
\label{eq:primal-quad-unbiased}
\end{align}
Indeed, this is because for any $\mathcal F_k$ measurable random vector $x$,
\begin{align*}
    \forall j \in [m],\quad \mathbb E\left[\|x_j^{k+1}-x_j\|^2\mid \mathcal{F}_k\right]=p_j\|\hat x_j^{k+1}-x_j\|^2+(1-p_j)\|x_j^k-x_j\|^2.
\end{align*}
The same argument applied conditionally on $\mathcal F_{k+\frac{1}{2}}$ gives
\begin{align}
&\mathbb E\!\left[
\frac12\|y^k-y\|_{\sigma^{-1}Q^{-1}}^2
-\frac12\|y^{k+1}-y^k\|_{\sigma^{-1}Q^{-1}}^2
-\frac12\|y^{k+1}-y\|_{\sigma^{-1}Q^{-1}}^2
\,\middle|\,\mathcal F_{k+\frac{1}{2}}
\right]
\nonumber\\
&\qquad =
\frac12\|y^k-y\|_{\sigma^{-1}}^2
-\frac12\|\hat y^{k+1}-y^k\|_{\sigma^{-1}}^2
-\frac12\|\hat y^{k+1}-y\|_{\sigma^{-1}}^2 .
\label{eq:dual-quad-unbiased}
\end{align}
Combining \eqref{eq:primal-quad-unbiased} and \eqref{eq:dual-quad-unbiased}, we have
\begin{align}
    \mathbb E[\hat{\mathcal{Q}}_k]=\mathbb E[\mathcal{Q}_k]
    \label{eq q_unbiased}
\end{align}

It remains to rewrite the bilinear terms in \eqref{eq:virtual-one-step}. Denote
\[
B_k :=
\langle A\hat x^{k+1},y\rangle
-\langle Ax,\hat y^{k+1}\rangle
+\langle A(x-\hat x^{k+1}),\bar y^k\rangle
+\langle A\bar x^{k+1},\hat y^{k+1}-y\rangle .
\]
Using \eqref{eq:unbiased-x} and the fact that $y$ and $\bar y^k$ are
$\mathcal F_k$-measurable, we obtain
\begin{align*}
\mathbb E[B_k]
&=
\mathbb E\!\left[
\langle A\bar x^{k+1},y\rangle
-\langle Ax,\hat y^{k+1}\rangle
+\langle A(x-\bar x^{k+1}),\bar y^k\rangle
+\langle A\bar x^{k+1},\hat y^{k+1}-y\rangle
\right]\\
&=
\mathbb E\!\left[
\langle A(\bar x^{k+1}-x),\hat y^{k+1}-\bar y^k\rangle
\right].
\end{align*}
By \eqref{eq:unbiased-y} and the definition of $\bar y^k$,
\[
\hat y^{k+1}-\bar y^k
=
Q^{-1}\mathbb E[y^{k+1}-y^k\mid \mathcal F_{k+\frac{1}{2}}]
-
Q^{-1}(y^k-y^{k-1}).
\]
Since $\bar x^{k+1}$ is $\mathcal F_{k+\frac{1}{2}}$-measurable, it follows that
\begin{align}
\mathbb E[B_k]
&=
\mathbb E\!\left[
\langle A(\bar x^{k+1}-x),
Q^{-1}(y^{k+1}+y^{k-1}-2y^k)\rangle
\right].
\label{eq:bilinear-unbiased}
\end{align}

Taking expectations in \eqref{eq:virtual-one-step} and substituting
\eqref{eq q_unbiased}, \eqref{eq:bilinear-unbiased}, we obtain exactly
\[
\mathbb E\!\left[
H(\hat x^{k+1},\hat y^{k+1},x,y)
\right]
\le
\mathbb E[\Gamma_k],
\]
which proves the claim.
\end{proof}

\subsection{Step-size conditions}

The doubly stochastic setting generates coupling terms involving both the sampled
primal and dual coordinates. To control these terms, we introduce the block operator
constants
\begin{align}
\Lambda_{r,s}
&\coloneqq
\max\bigl\{\|A_{IJ}\|:\ I\subseteq[n],\ J\subseteq[m],\ |I|=r,\ |J|=s\bigr\},\label{eq:Lambda_rs}\\
\Lambda_r
&\coloneqq
\max\bigl\{\|A_I\|:\ I\subseteq[n],\ |I|=r\bigr\},
\end{align}
where \(A_{IJ}\) denotes the restriction of \(A\) to the row block set \(I\) and
the column block set \(J\), and \(A_I\) denotes the restriction of \(A\) to the row
block set \(I\). The norms above are the induced operator norms under the canonical
Euclidean product norms.

We focus on fixed-cardinality samplings satisfying
\[
|S^{k+1}|=s,\qquad |T^{k+1}|=r,\qquad \forall k\ge 0.
\]
For uniform fixed-cardinality samplings, the corresponding marginal probabilities are
\[
p_j=p=\frac{s}{m},\qquad q_i=q=\frac{r}{n}.
\]

\begin{assumption}\label{ass:stepsizes-general}
We use uniform fixed-cardinality samplings and equal step sizes, namely
\[
p_j=p,\qquad q_i=q,\qquad \tau_j=\tau,\qquad \sigma_i=\sigma .
\]
Besides, we assume that the sampling sets $S^k, T^k, k \ge 1$ are all independent. Moreover, the step sizes and sampling probabilities satisfy
\begin{align}
    \Lambda_{r,s}\,
p^{-1/2}q^{-1/2}\tau^{1/2}\sigma^{1/2}
& =
\gamma_1^2  
<
\frac12,  \label{eq:gamma1}\\
\Lambda_r\,
p^{1/2}q^{-1/2}\tau^{1/2}\sigma^{1/2}
&=
\gamma_2^2
\le
\frac12 . \label{eq:gamma2}
\end{align}
\end{assumption}

\begin{remark}\label{rem:stepsizes-general}
The assumptions of uniform sampling and equal step sizes are imposed only to simplify
the notation and the proof of Lemma~\ref{lem:V-positivity}. The analysis can be
extended to nonuniform samplings and block-dependent step sizes by using an expected
separable overapproximation (ESO) inequality, as in
\cite{doi:10.1137/17M1134834}. The remaining parts of the argument do not rely on
uniform sampling or equal step sizes.
\end{remark}


\begin{remark}
The constant $\frac{1}{2}$ in our step-size condition is more conservative than the constant $1$ typically used in deterministic PDHG. This additional restriction arises from the stochastic updates on both the primal and dual sides. In our numerical experiments, however, we observed that the larger constant $1$ also works well in practice.
\end{remark}

Assumption~\ref{ass:stepsizes-general} is a two-sided analogue of the standard
SPDHG step-size restriction. The first condition controls the interaction between
the stochastic primal and dual increments, while the second ensures positivity of
the time-dependent Lyapunov functional used below.

\section{General Convex Convergence}\label{sec:general-convex}

We now establish the basic convergence guarantees of DSPDHG in the general convex
setting. Our goal is to prove an $\mathcal O(1/K)$ ergodic convergence rate for the
expected restricted primal--dual gap.

\subsection{Lyapunov functions}

To telescope the one-step estimate, we introduce two weighted quadratic functionals.
For $z=(x,y)$, define
\begin{align}
V(z)
=
\frac14\|x\|_{\tau^{-1}P^{-1}}^2
+\frac14\|y\|_{\sigma^{-1}Q^{-1}}^2
+\langle AP^{-1}x,Q^{-1}y\rangle, \label{eq:V-def}\\
V_k(z)
=
\frac12\|x\|_{\tau^{-1}P^{-1}}^2
+\frac12\|y\|_{\sigma^{-1}Q^{-1}}^2
+\frac14\|y^k-y^{k-1}\|_{\sigma^{-1}Q^{-1}}^2
-\langle Ax,Q^{-1}(y^k-y^{k-1})\rangle . \label{eq:Vk-def}
\end{align}
Here and below, we use the convention $y^{-1}=y^0$. Under
Assumption~\ref{ass:stepsizes-general}, these functionals satisfy the coercivity and
descent properties needed for the telescoping argument.

\begin{lemma}\label{lem:V-positivity}
Suppose Assumption~\ref{ass:stepsizes-general} holds. Then for all $k\ge 0$:
\begin{align}
&\hspace{1.5em}V(x^{k+1}-x^k,y^k-y^{k-1})
\ge
\frac{1-2\gamma_1^2}{4}
\left(
\|x^{k+1}-x^k\|_{\tau^{-1}P^{-1}}^2
+
\|y^k-y^{k-1}\|_{\sigma^{-1}Q^{-1}}^2
\right), \label{eq:V-coercive}
\\
&\hspace{1.5em}V_k(z)
\ge
\frac14\|x\|_{\tau^{-1}P^{-1}}^2+\frac12\|y\|_{\sigma^{-1}Q^{-1}}^2,
\label{eq:Vk-positive}
\\
&\hspace{1.5em}V_k(x^k-x,y^k-y)-V_{k+1}(x^{k+1}-x,y^{k+1}-y)
\ge
V(x^{k+1}-x^k,y^k-y^{k-1})+\Gamma_k.
\label{eq:Vk-descent}
\end{align}
\end{lemma}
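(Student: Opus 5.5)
The plan is to prove the three inequalities separately. Inequalities \eqref{eq:V-coercive} and \eqref{eq:Vk-positive} follow from a blockwise Cauchy--Schwarz bound that exploits the support of the increments. Inequality \eqref{eq:Vk-descent} follows from an exact algebraic expansion plus one more application of the same bound.

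\textbf{Support bound.} Under uniform fixed-cardinality sampling, $x^{k+1}-x^k$ is supported on $S^{k+1}$ (with $|S^{k+1}|=s$) and $y^k-y^{k-1}$ is supported on $T^k$ (with $|T^k|=r$). For $k=0$ the convention $y^{-1}=y^0$ makes the increment zero. Since $P=pI$ and $Q=qI$ are scalar, for $\Delta x$ supported on a set of $s$ primal blocks and $\Delta y$ supported on a set of $r$ dual blocks,
\[
|\langle AP^{-1}\Delta x,Q^{-1}\Delta y\rangle|\le \Lambda_{r,s}p^{-1}q^{-1}\|\Delta x\|\|\Delta y\| .
\]
Write $a=\|\Delta x\|_{\tau^{-1}P^{-1}}=(\tau p)^{-1/2}\|\Delta x\|$ and $b=\|\Delta y\|_{\sigma^{-1}Q^{-1}}=(\sigma q)^{-1/2}\|\Delta y\|$. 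The bound then becomes $\Lambda_{r,s}p^{-1/2}q^{-1/2}\tau^{1/2}\sigma^{1/2}ab=\gamma_1^2ab\le\frac{\gamma_1^2}{2}(a^2+b^2)$. Inserting this into \eqref{eq:V-def} gives $V\ge(\frac14-\frac{\gamma_1^2}{2})(a^2+b^2)$, which is \eqref{eq:V-coercive}.

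\textbf{Positivity of $V_k$.} For \eqref{eq:Vk-positive}, $x$ is arbitrary, so only the row support $T^k$ of $y^k-y^{k-1}$ is available. Hence
\[
|\langle Ax,Q^{-1}(y^k-y^{k-1})\rangle|\le\Lambda_rq^{-1}\|x\|\|y^k-y^{k-1}\|=\gamma_2^2\,a\,b\le\tfrac12 ab\le\tfrac14a^2+\tfrac14b^2,
\]
now with $a=\|x\|_{\tau^{-1}P^{-1}}$ and $b=\|y^k-y^{k-1}\|_{\sigma^{-1}Q^{-1}}$, using \eqref{eq:gamma2}. The $\frac14b^2$ cancels the $\frac14\|y^k-y^{k-1}\|^2$ term in \eqref{eq:Vk-def}, and half of the $x$-term survives, giving \eqref{eq:Vk-positive}.

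\textbf{Descent identity.} For \eqref{eq:Vk-descent}, abbreviate $\Delta x=x^{k+1}-x^k$, $\Delta y^{\rm old}=y^k-y^{k-1}$ and $\Delta y^{\rm new}=y^{k+1}-y^k$, and expand $V_k(x^k-x,y^k-y)-V_{k+1}(x^{k+1}-x,y^{k+1}-y)-\Gamma_k$ exactly.

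The distance terms $\frac12\|x^k-x\|^2$, $\frac12\|x^{k+1}-x\|^2$ and their $y$-analogues cancel against $\Gamma_k$. What remains of the quadratic part is
\[
\tfrac12\|\Delta x\|^2_{\tau^{-1}P^{-1}}+\tfrac12\|\Delta y^{\rm new}\|^2_{\sigma^{-1}Q^{-1}}+\tfrac14\|\Delta y^{\rm old}\|^2_{\sigma^{-1}Q^{-1}}-\tfrac14\|\Delta y^{\rm new}\|^2_{\sigma^{-1}Q^{-1}}.
\]
For the bilinear terms, substitute $\bar x^{k+1}-x=(x^{k+1}-x)+(P^{-1}-I)\Delta x$. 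The terms involving $x^{k+1}-x$ and $x^k-x$ then telescope, and I expect the bilinear remainder to be
\[
\langle AP^{-1}\Delta x,Q^{-1}\Delta y^{\rm old}\rangle-\langle A(P^{-1}-I)\Delta x,Q^{-1}\Delta y^{\rm new}\rangle .
\]
The first term is exactly the cross term of $V(\Delta x,\Delta y^{\rm old})$. Subtracting $V(\Delta x,\Delta y^{\rm old})$, the claim reduces to
\[
\tfrac14\|\Delta x\|^2_{\tau^{-1}P^{-1}}+\tfrac14\|\Delta y^{\rm new}\|^2_{\sigma^{-1}Q^{-1}}-\langle A(P^{-1}-I)\Delta x,Q^{-1}\Delta y^{\rm new}\rangle\ge0 .
\]
Here $\Delta x$ is supported on $S^{k+1}$, $\Delta y^{\rm new}$ is supported on $T^{k+1}$, and $0\le p^{-1}-1\le p^{-1}$. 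So the support bound applies again, giving a bound of $\gamma_1^2ab\le\frac12ab\le\frac14(a^2+b^2)$, which proves the inequality.

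The main obstacle is the bookkeeping in this last expansion. The extrapolation factor $P^{-1}-I$ and the $\pm$ signs of the $\Delta y$ terms must combine to produce precisely the cross term of $V$ plus a remainder that is controllable by $\gamma_1$. I would verify this by writing every bilinear term with $x^{k+1}-x$ and $\Delta x$ as the only primal quantities before collecting.
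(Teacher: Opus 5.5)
Your proposal is correct and follows the paper's proof step for step. The same support-based Cauchy--Schwarz bounds via $\Lambda_{r,s}$ and $\Lambda_r$ give \eqref{eq:V-coercive} and \eqref{eq:Vk-positive}, and the bilinear bookkeeping you were unsure about checks out: it reduces \eqref{eq:Vk-descent} to exactly the paper's remainder $\frac14\|\Delta x\|^2_{\tau^{-1}P^{-1}}+\frac14\|\Delta y^{\rm new}\|^2_{\sigma^{-1}Q^{-1}}-\langle A(P^{-1}-I)\Delta x,Q^{-1}\Delta y^{\rm new}\rangle$, which is controlled by $p^{-1}-1\le p^{-1}$ (the paper's $\gamma_3^2\le\gamma_1^2$).
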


\begin{proof}
    First, we use \eqref{eq:gamma1} to prove \eqref{eq:V-coercive}. By definition, we have
    \begin{align}
        \begin{aligned}
             V(x^{k+1}-x^k,y^k-y^{k-1})=& \frac{1}{4}\|x^{k+1}-x^k\|_{\tau^{-1}P^{-1}}^2+\frac{1}{4}\|y^k-y^{k-1}\|_{\sigma^{-1}Q^{-1}}^2\\ &+\langle AP^{-1} (x^{k+1}-x^k), Q^{-1}(y^k-y^{k-1})\rangle.
        \end{aligned}
        \label{q 24}
    \end{align}
Let $J=S^{k+1},I=T^k$. Only for $j \in J$, $x^{k+1}_j-x_j^k$ is nonzero and only for $i \in I$, $y_i^k-y_i^{k-1}$ is nonzero. Therefore,
\begin{align}
    \begin{aligned}
        &\left|\langle AP^{-1} (x^{k+1}-x^k), Q^{-1}(y^k-y^{k-1})\rangle\right|=\left| \sum_{i=1}^n \sum_{j=1}^m \frac{1}{pq} \langle A_{ij}(x_j^{k+1}-x_j^k), y_i^k-y_i^{k-1} \rangle  \right|\\
         =&\left|\sum_{i \in I} \sum_{j \in J} \tau^\frac{1}{2}\sigma^{\frac{1}{2}}p^{-\frac{1}{2}}q^{-\frac{1}{2}} \langle A_{ij}(x_j^{k+1}-x_j^k)\tau^{-\frac{1}{2}}p^{-\frac{1}{2}}, (y_i^k-y_i^{k-1})\sigma^{-\frac{1}{2}}q^{-\frac{1}{2}} \rangle\right|\\
        =&\left(  p^{-\frac{1}{2}}q^{-\frac{1}{2}}\tau^{\frac{1}{2}}\sigma^{\frac{1}{2}}\right)\cdot \left| \langle A_{IJ}p^{-\frac{1}{2}}\tau^{-\frac{1}{2}}(x_J^{k+1}-x_J^k),\sigma^{-\frac{1}{2}}q^{-\frac{1}{2}}(y_I^{k}-y_I^{k-1}) \rangle   \right|\\
       \leq &\left( p^{-\frac{1}{2}}q^{-\frac{1}{2}}\tau^{\frac{1}{2}}\sigma^{\frac{1}{2}}\right)\cdot \|A_{IJ}\|\cdot\|p^{-\frac{1}{2}}\tau^{-\frac{1}{2}}(x_J^{k+1}-x_J^k)\| \cdot \|\sigma^{-\frac{1}{2}}q^{-\frac{1}{2}}(y_I^{k}-y_I^{k-1})\|\\
        \leq &\frac{\gamma_1^2}{2} \left(\|p^{-\frac{1}{2}}\tau^{-\frac{1}{2}}(x_J^{k+1}-x_J^k)\|^2+\|\sigma^{-\frac{1}{2}}q^{-\frac{1}{2}}(y_I^{k}-y_I^{k-1})\|^2 \right)\\
        = &\frac{\gamma_1^2}{2} \left(\|x^{k+1}-x^k\|_{\tau^{-1}P^{-1}}^2 +\|y^k-y^{k-1}\|_{\sigma^{-1}Q^{-1}}^2 \right).
    \end{aligned}
    \label{q 25}
\end{align}
Combining \eqref{q 24} and \eqref{q 25}, we get \eqref{eq:V-coercive}.

Second, we use \eqref{eq:gamma2} to prove \eqref{eq:Vk-positive}. It is obvious that we only need to show
\begin{align}
    \frac{1}{4}\|x\|_{\tau^{-1}P^{-1}}^2+\frac{1}{4}\|y^k-y^{k-1}\|_{\sigma^{-1}Q^{-1}}^2-\langle Ax,Q^{-1}(y^k-y^{k-1})\rangle \ge 0
    \label{q 26}
\end{align}
Similar to the previous computation, we have
\begin{align}
        \left|\langle Ax, Q^{-1}(y^k-y^{k-1})  \rangle\right| &= \left|\sum_{i \in I}\sum_{j=1}^m \sigma^{\frac{1}{2}}\tau^{\frac{1}{2}}p^\frac{1}{2}q^{-\frac{1}{2}} \langle A_{ij}x_j \tau^{-\frac{1}{2}}p^{-\frac{1}{2}}, (y_i^k-y_i^{k-1})\sigma^{-\frac{1}{2}}q^{-\frac{1}{2}} \rangle\right| \nonumber\\
        & = \left(p^{\frac{1}{2}}q^{-\frac{1}{2}}\tau^{\frac{1}{2}}\sigma^{\frac{1}{2}}\right)\cdot \left| \langle  p^{-\frac{1}{2}}\tau^{-\frac{1}{2}}A_Ix,\sigma^{-\frac{1}{2}} q^{-\frac{1}{2}}(y_I^k-y_I^{k-1} )\rangle   \right| \nonumber\\
        & \leq \frac{\gamma_2^2}{2} \left(\|p^{-\frac{1}{2}}\tau^{-\frac{1}{2}}x\|^2+\|\sigma^{-\frac{1}{2}} q^{-\frac{1}{2}}(y_I^k-y_I^{k-1} )\|^2 \right)\\
        &\leq \frac{1}{4}\|x\|_{\tau^{-1}P^{-1}}^2+\frac{1}{4}\|y^k-y^{k-1}\|_{\sigma^{-1}Q^{-1}}^2 \nonumber.
\end{align}
This inequality directly leads to \eqref{q 26}.

Finally, we use \eqref{eq:gamma1} to prove \eqref{eq:Vk-descent}. We can compute
\begin{align}
    \begin{aligned}
         V_k&(x^k-x,y^k-y)-V_{k+1}(x^{k+1}-x,y^{k+1}-y)-V(x^{k+1}-x^{k},y^k-y^{k-1})-\Gamma_k\\
         =&\frac{1}{4}\|x^{k+1}-x^k\|_{\tau^{-1}P^{-1}}^2+\frac{1}{4}\|y^{k+1}-y^{k}\|_{\sigma^{-1}Q^{-1}}^2\\
         &+\langle A(I-P^{-1})(x^{k+1}-x^k),Q^{-1}(y^{k+1}-y^k)\rangle.
    \end{aligned}
\end{align}
Following the  same procedure as in \eqref{q 25}, and letting $\gamma_3^2=\Lambda_{r,s} \cdot( p^{-\frac{1}{2}}-p^\frac{1}{2})q^{-\frac{1}{2}}\tau^{\frac{1}{2}}\sigma^{\frac{1}{2}}$, we have 
\begin{align}
    \begin{aligned}
        &\left|\langle A(I-P^{-1})(x^{k+1}-x^k),Q^{-1}(y^{k+1}-y^k)\rangle\right| \\
        &\leq \frac{\gamma_3^2}{2}\left(\|x^{k+1}-x^k\|^2_{\tau^{-1}P^{-1}}+\|y^{k+1}-y^k\|_{\sigma^{-1}Q^{-1}}^2\right)\\
        & \leq \frac{1}{4}\left(\|x^{k+1}-x^k\|^2_{\tau^{-1}P^{-1}}+\|y^{k+1}-y^k\|_{\sigma^{-1}Q^{-1}}^2\right),
    \end{aligned}
\end{align}
where the last inequality follows from $\gamma_3^2 \leq \gamma_1^2 \leq \frac{1}{2}$. This inequality directly leads to \eqref{eq:Vk-descent}.
\end{proof}

Combining Lemmas~\ref{lem:Gamma-bound} and \ref{lem:V-positivity} yields the
fundamental recursive estimate.

\begin{lemma}\label{lem:fundamental-recursion}
Under Assumption~\ref{ass:basic} and Assumption~\ref{ass:stepsizes-general}, for every $k\ge 0$ and $(x,y)\in\mathcal Z$,
\begin{align}
\mathbb E\!\left[H(\hat x^{k+1},\hat y^{k+1},x,y)\right]
\le\;&
\mathbb E\!\left[V_k(x^k-x,y^k-y)\right]
-
\mathbb E\!\left[V_{k+1}(x^{k+1}-x,y^{k+1}-y)\right]
\nonumber\\
&-
\mathbb E\!\left[V(x^{k+1}-x^k,y^k-y^{k-1})\right].
\label{f 30}
\end{align}
In particular, if $z^\star=(x^\star,y^\star)\in\mathcal Z^\star$ is a saddle point, then
\begin{align}
    \begin{aligned}
           &\mathbb{E}\left[V_k(x^k-x^\star,y^k-y^\star)\right] \leq  d_0, \quad \forall k \ge 0,\\
    &\sum_{k=0}^{\infty} \mathbb{E}\left[V(x^{k+1}-x^{k},y^k-y^{k-1})\right] \leq d_0,
    \end{aligned}
    \label{eq 26}
\end{align}
where $d_0=\mathbb{E}\left[V_0(x^0-x^\star,y^0-y^\star)\right]$. Consequently,
\begin{align}\label{f 32}
\mathbb E\|z^k-z^\star\|_V^2
\le
2\,\mathbb E\|z^0-z^\star\|_V^2,
\qquad
\forall k\ge 0,
\end{align}
and
\begin{align}\label{f 33}
\mathbb E\!\left[\sum_{k=0}^{\infty}\|z^{k+1}-z^k\|_V^2\right]
\le
\frac{2}{1-2\gamma_1^2}\,
\mathbb E\|z^0-z^\star\|_V^2.
\end{align}
\end{lemma}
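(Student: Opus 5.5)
The plan is to combine the expectation bound of Lemma~\ref{lem:Gamma-bound} with the pathwise descent inequality \eqref{eq:Vk-descent} of Lemma~\ref{lem:V-positivity}. Fix a deterministic $(x,y)\in\mathcal Z$. Inequality \eqref{eq:Vk-descent} holds for every realization of the samplings. Rearranged, it gives
\[
\Gamma_k\le V_k(x^k-x,y^k-y)-V_{k+1}(x^{k+1}-x,y^{k+1}-y)-V(x^{k+1}-x^k,y^k-y^{k-1}).
\]
Taking expectations and chaining with $\mathbb E[H(\hat x^{k+1},\hat y^{k+1},x,y)]\le \mathbb E[\Gamma_k]$ from \eqref{eq:Gamma-bound} yields \eqref{f 30} directly. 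I will check that all quantities are integrable, which is fine since the iterates are finite-dimensional and (inductively) square-integrable. I will also note that the functional $V_{k+1}$ implicitly uses $y^{k+1}-y^k$, consistent with its definition \eqref{eq:Vk-def}.

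For \eqref{eq 26}, I take $(x,y)=z^\star$. By the saddle-point property, $H(\hat x^{k+1},\hat y^{k+1},x^\star,y^\star)=\Phi(\hat x^{k+1},y^\star)-\Phi(x^\star,\hat y^{k+1})\ge 0$, so the left side of \eqref{f 30} is nonnegative. This gives
\[
\mathbb E[V_{k+1}]+\mathbb E[V(x^{k+1}-x^k,y^k-y^{k-1})]\le \mathbb E[V_k].
\]
Since $V(x^{k+1}-x^k,y^k-y^{k-1})\ge 0$ by \eqref{eq:V-coercive} (using $\gamma_1^2<\tfrac12$), the sequence $\mathbb E[V_k]$ is nonincreasing, hence bounded by $d_0$. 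Summing the inequality from $k=0$ to $K-1$ and dropping $\mathbb E[V_K]\ge 0$, which holds by \eqref{eq:Vk-positive}, bounds the partial sums of $\mathbb E[V(\cdot)]$ by $d_0$. Letting $K\to\infty$ then gives the series bound.

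For the consequences, I use the convention $y^{-1}=y^0$. It kills the $y^0-y^{-1}$ terms, so $V_0(z^0-z^\star)=\tfrac12\|z^0-z^\star\|_V^2$ and $d_0=\tfrac12\mathbb E\|z^0-z^\star\|_V^2$. By \eqref{eq:Vk-positive}, $V_k(z^k-z^\star)\ge \tfrac14\|z^k-z^\star\|_V^2$, so $\tfrac14\mathbb E\|z^k-z^\star\|_V^2\le d_0$, which is \eqref{f 32}. For \eqref{f 33}, I apply \eqref{eq:V-coercive} termwise. The key reindexing is that $\sum_{k\ge0}\|y^k-y^{k-1}\|^2_{\sigma^{-1}Q^{-1}}=\sum_{k\ge0}\|y^{k+1}-y^k\|^2_{\sigma^{-1}Q^{-1}}$, again because the $k=0$ term vanishes. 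Hence
\[
\tfrac{1-2\gamma_1^2}{4}\,\mathbb E\sum_{k\ge0}\|z^{k+1}-z^k\|_V^2\le d_0,
\]
where the expectation and the infinite sum are exchanged by monotone convergence. This gives the constant $\tfrac{2}{1-2\gamma_1^2}$. The only delicate points are the index shift in the dual differences and the role of the convention $y^{-1}=y^0$. I expect the main obstacle to be verifying that nonnegativity of $V_{K}$ and of $V(\cdot)$ is available pathwise so the telescoping is legitimate, and both are already supplied by Lemma~\ref{lem:V-positivity}.
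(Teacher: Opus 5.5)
Your proposal is correct and follows the paper's proof: you chain \eqref{eq:Gamma-bound} with the pathwise descent \eqref{eq:Vk-descent}, use $H(\hat x^{k+1},\hat y^{k+1},x^\star,y^\star)\ge 0$ at a saddle point, and then telescope using \eqref{eq:V-coercive} and \eqref{eq:Vk-positive}. You also make explicit two details the paper leaves implicit: the identity $d_0=\tfrac12\mathbb E\|z^0-z^\star\|_V^2$, which comes from the convention $y^{-1}=y^0$, and the index shift in the dual differences that yields the constant $\tfrac{2}{1-2\gamma_1^2}$.
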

\begin{proof}
     Using \eqref{eq:Gamma-bound},\eqref{eq:Vk-descent}, we derive \eqref{f 30}.  By the definition of saddle point, we have $H(\hat{x}^{k+1},\hat{y}^{k+1},x^\star,y^\star) \ge 0$. Therefore,
     \begin{align}
         \begin{aligned}
             0 \leq& \mathbb{E}\left[V_k(x^k-x^\star,y^k-y^\star)\right]-\mathbb{E}\left[V_{k+1}(x^{k+1}-x^\star,y^{k+1}-y^\star)\right]\\
             &-\mathbb{E}\left[V(x^{k+1}-x^{k},y^k-y^{k-1})\right]. 
         \end{aligned}
          \label{f 34}
     \end{align}
   The remaining claims follow by combining \eqref{f 34} and \eqref{eq:V-coercive}, \eqref{eq:Vk-positive}.
\end{proof}

\subsection{A realized one-step bound}

To derive a rate for the actual stochastic iterates rather than the virtual full
updates, we need one additional decomposition. Define
\[
g_{P^{-1}}(\bar x):=\sum_{j=1}^m \frac{1}{p_j}g_j(\bar x_j),
\qquad
f^*_{Q^{-1}}(\bar y):=\sum_{i=1}^n \frac{1}{q_i}f_i^*(\bar y_i),
\]
and
\[
g_{P^{-1}-I}(\bar x):=
\sum_{j=1}^m \left(\frac{1}{p_j}-1\right)g_j(\bar x_j),
\qquad
f^*_{Q^{-1}-I}(\bar y):=
\sum_{i=1}^n \left(\frac{1}{q_i}-1\right)f_i^*(\bar y_i).
\]
For \(z=(x,y)\), define
\begin{align}
D_g^{P^{-1}-I}(\bar x;z)
&:=
g_{P^{-1}-I}(\bar x)-g_{P^{-1}-I}(x)
+\langle A(P^{-1}-I)(\bar x-x),y\rangle,
\label{eq:Dg-def}\\
D_{f^*}^{Q^{-1}-I}(\bar y;z)
&:=
f^*_{Q^{-1}-I}(\bar y)-f^*_{Q^{-1}-I}(y)
-\langle Ax,(Q^{-1}-I)(\bar y-y)\rangle .
\label{eq:Df-def}
\end{align}
We also define the stochastic residuals
\begin{align}\label{eq:uv-def}
u^k:=\hat x^{k+1}-x^k-P^{-1}(x^{k+1}-x^k),
\qquad
v^k:=\hat y^{k+1}-y^k-Q^{-1}(y^{k+1}-y^k).
\end{align}

\begin{lemma}\label{lem:realized-one-step}
Suppose Assumption~\ref{ass:basic} and \ref{ass:stepsizes-general} hold. For all \(k\ge 0\) and
\((x,y)\in\mathcal Z\), we have
\begin{align}
H(x^{k+1},y^{k+1},x,y)
\le\;&
D_g^{P^{-1}-I}(x^k;z)-D_g^{P^{-1}-I}(x^{k+1};z)
\nonumber\\
&+
D_{f^*}^{Q^{-1}-I}(y^k;z)-D_{f^*}^{Q^{-1}-I}(y^{k+1};z)
\nonumber\\
&+
\langle x,u^k\rangle_{\tau^{-1}}
+
\langle y,v^k\rangle_{\sigma^{-1}}
+\varepsilon_k
\nonumber\\
&+
V_k(x^k-x,y^k-y)
-
V_{k+1}(x^{k+1}-x,y^{k+1}-y)
\nonumber\\
&-
V(x^{k+1}-x^k,y^k-y^{k-1}).
\label{eq:realized-one-step}
\end{align}
Moreover, \(\mathbb E[\varepsilon_k]=0\).
\end{lemma}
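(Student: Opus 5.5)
The plan is to start from the deterministic inequality of Lemma~\ref{lem:virtual-onestep} for the virtual full updates. We then replace every quantity involving $(\hat x^{k+1},\hat y^{k+1})$ by its realized stochastic counterpart, plus a correction. We split each correction into two parts:
\begin{itemize}
\item a part that is linear in the reference point $z=(x,y)$ with a random coefficient: these are collected into $\langle x,u^k\rangle_{\tau^{-1}}$, $\langle y,v^k\rangle_{\sigma^{-1}}$ and into the $D$-terms;
\item a part that does not depend on $z$ and is a martingale difference: this is collected into $\varepsilon_k$.
\end{itemize}
Thus $\varepsilon_k$ is \emph{defined} as the sum of all $z$-independent terms of the form $W-\mathbb E[W\mid\mathcal F_k]$ or $W-\mathbb E[W\mid \mathcal F_{k+\frac12}]$. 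This immediately gives $\mathbb E[\varepsilon_k]=0$ by the tower property.

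\textbf{Step 1 (nonsmooth terms).} By the blockwise update rule,
\[
\mathbb E\bigl[g_{P^{-1}}(x^{k+1})\mid\mathcal F_k\bigr]=g(\hat x^{k+1})+g_{P^{-1}-I}(x^k).
\]
Writing $g=g_{P^{-1}}-g_{P^{-1}-I}$, this yields
\[
g(x^{k+1})=g(\hat x^{k+1})+g_{P^{-1}-I}(x^k)-g_{P^{-1}-I}(x^{k+1})+\bigl(g_{P^{-1}}(x^{k+1})-\mathbb E[g_{P^{-1}}(x^{k+1})\mid\mathcal F_k]\bigr).
\]
The same identity holds for $f^*$, conditionally on $\mathcal F_{k+\frac12}$. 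The constants $g_{P^{-1}-I}(x)$ and $f^*_{Q^{-1}-I}(y)$ cancel in the differences $D(\cdot^k)-D(\cdot^{k+1})$, so they can be added for free.

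\textbf{Step 2 (bilinear terms in $H$).} We treat the bilinear part of $H(x^{k+1},y^{k+1},x,y)-H(\hat x^{k+1},\hat y^{k+1},x,y)$ analogously. We use $\hat x^{k+1}=x^k+P^{-1}(x^{k+1}-x^k)+u^k$. The pieces $\langle A(P^{-1}-I)(x^k-x^{k+1}),y\rangle$ and $-\langle Ax,(Q^{-1}-I)(y^k-y^{k+1})\rangle$ are exactly the inner-product parts of the $D$-differences. The remaining pieces are linear in $u^k,v^k$ and get absorbed after Step~3.

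\textbf{Step 3 (quadratic terms).} We expand the quadratic block $\hat{\mathcal Q}_k$ of Lemma~\ref{lem:virtual-onestep} and $\mathcal Q_k$ from \eqref{Q_k_def} in $x$ and $y$. The $z$-free parts differ by a martingale difference, as shown in the proof of Lemma~\ref{lem:Gamma-bound}; these go into $\varepsilon_k$. The parts linear in $x$ differ by $\langle x,\hat x^{k+1}-x^k-P^{-1}(x^{k+1}-x^k)\rangle_{\tau^{-1}}=\langle x,u^k\rangle_{\tau^{-1}}$, and similarly for $y$ with $v^k$. The quadratic-in-$z$ parts cancel.

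\textbf{Step 4 (coupling terms).} The bilinear coupling terms of Lemma~\ref{lem:virtual-onestep} are rewritten exactly as in \eqref{eq:bilinear-unbiased}. This gives $\langle A(\bar x^{k+1}-x),Q^{-1}(y^{k+1}+y^{k-1}-2y^k)\rangle$ plus the $z$-free fluctuations, together with $z$-linear fluctuations expressible through $u^k$ and $v^k$. Assembling Steps 1--4 gives
\[
H(x^{k+1},y^{k+1},x,y)\le \Gamma_k+(\text{$D$-differences})+\langle x,u^k\rangle_{\tau^{-1}}+\langle y,v^k\rangle_{\sigma^{-1}}+\varepsilon_k .
\]
Applying \eqref{eq:Vk-descent} of Lemma~\ref{lem:V-positivity} to bound $\Gamma_k$ by the $V_k$-difference minus $V(x^{k+1}-x^k,y^k-y^{k-1})$ then gives \eqref{eq:realized-one-step}.

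\textbf{Main obstacle.} The main difficulty is the bookkeeping in Steps~2 and~4. Every random term multiplied by $x$ or $y$ must be shown to combine into $\tau^{-1}u^k$ or $\sigma^{-1}v^k$, possibly after transferring $A$-weighted pieces into the $D$ functionals. Otherwise $\varepsilon_k$ would depend on $z$, and its zero mean would be lost once a supremum over $z$ is taken later.

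I would first verify the primal side alone, i.e.\ the case $Q=I$, where $v^k=0$ and the dual is deterministic given $\mathcal F_{k+\frac12}$. I would then symmetrize to the dual side.
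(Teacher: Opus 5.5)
Your plan is correct and is essentially the paper's proof. It uses the same split of $g(x^{k+1})-g(\hat x^{k+1})$ into $D$-differences plus the martingale term $R_g^k$ (and likewise $R_{f^*}^k$), the same decomposition $\hat{\mathcal Q}_k=\mathcal Q_k+\langle x,u^k\rangle_{\tau^{-1}}+\langle y,v^k\rangle_{\sigma^{-1}}+\delta_k$, the same rewriting of the coupling terms via $\bar x^{k+1}$ and $\bar y^k$, and closes with \eqref{eq:Vk-descent}.

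The bookkeeping you flag as the main obstacle resolves by exact cancellation, not by absorption into $\tau^{-1}u^k$ or the $D$ functionals. Substituting $\hat x^{k+1}=\bar x^{k+1}+u^k$ and $\hat y^{k+1}=y^k+Q^{-1}(y^{k+1}-y^k)+v^k$ into $\langle A\hat x^{k+1},y\rangle-\langle Ax,\hat y^{k+1}\rangle$ produces $\langle Au^k,y\rangle-\langle Ax,v^k\rangle$. This cancels your Step~2 leftover $-\langle Au^k,y\rangle+\langle Ax,v^k\rangle$, so only the $z$-free terms $\langle A\bar x^{k+1},v^k\rangle-\langle Au^k,\bar y^k\rangle$ remain for $\varepsilon_k$.
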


\begin{proof}
We first compare the virtual and realized gap kernels. By definition,
\begin{align}
&H(\hat x^{k+1},\hat y^{k+1},x,y)
-
H(x^{k+1},y^{k+1},x,y)
\nonumber\\
 = \;&
g(\hat x^{k+1})-g(x^{k+1})
+\langle A(\hat x^{k+1}-x^{k+1}),y\rangle
\nonumber\\
&
+
f^*(\hat y^{k+1})-f^*(y^{k+1})
+\langle Ax,y^{k+1}-\hat y^{k+1}\rangle .
\label{eq:gap-difference}
\end{align}
Substituting \eqref{eq:gap-difference} into the virtual one-step inequality
\eqref{eq:virtual-one-step}, and rearranging the separable terms, gives
\begin{align}
    \begin{aligned}
        H(x^{k+1},y^{k+1},x,y) &\leq g(x^{k+1})-g(\hat{x}^{k+1})+\langle A(x^{k+1}-\hat{x}^{k+1}),y\rangle\\
        & +f^*(y^{k+1})-f^*(\hat{y}^{k+1})+\langle Ax, \hat{y}^{k+1}-y^{k+1}\rangle\\
        & +\langle A\hat{x}^{k+1},y\rangle -\langle Ax, \hat{y}^{k+1}\rangle\\
        & + \langle A(x-\hat{x}^{k+1}),\bar{y}^k\rangle + \langle A\bar{x}^{k+1},\hat{y}^{k+1}-y\rangle\\
        & + \frac{1}{2}\|x^k-x\|_{\tau^{-1}}^2-\frac{1}{2}\|\hat{x}^{k+1}-x^k\|_{\tau^{-1}}^2-\frac{1}{2}\|\hat{x}^{k+1}-x\|_{\tau^{-1}}^2\\
        &+ \frac{1}{2}\|y^k-y\|_{\sigma^{-1}}^2-\frac{1}{2}\|\hat{y}^{k+1}-y^k\|_{\sigma^{-1}}^2-\frac{1}{2}\|\hat{y}^{k+1}-y\|_{\sigma^{-1}}^2.
    \end{aligned}
    \label{eq 32}
\end{align}

For the first two rows in \eqref{eq 32}, we have the following equalities from direct computation.
\begin{align}
    \begin{aligned}
        & g(x^{k+1})-g(\hat{x}^{k+1})+\langle A(x^{k+1}-\hat{x}^{k+1}),y\rangle\\
        =&D_g^{P^{-1}-I}(x^k;z)-D_g^{P^{-1}-I}(x^{k+1};z)+R_g^k+ \langle x^k-\hat{x}^{k+1}-P^{-1}(x^k-x^{k+1}), A^\top y \rangle,\\
        & f^*(y^{k+1})-f^*(\hat{y}^{k+1})+\langle Ax, \hat{y}^{k+1}-y^{k+1}\rangle\\
        =&D_{f^*}^{Q^{-1}-I}(y^k;z)-D_{f^*}^{Q^{-1}-I}(y^{k+1};z)+R_{f^*}^k+ \langle Ax, \hat{y}^{k+1}-y^k-Q^{-1}(y^{k+1}-y^k)\rangle,
    \end{aligned}
    \label{eq 33}
\end{align}
where
\begin{align*}
R_g^k
&:=
g(x^k)-g(\hat x^{k+1})
+
g_{P^{-1}}(x^{k+1})-g_{P^{-1}}(x^k),\\
R_{f^*}^k
&:=
f^*(y^k)-f^*(\hat y^{k+1})
+
f^*_{Q^{-1}}(y^{k+1})-f^*_{Q^{-1}}(y^k).
\end{align*}

For the quadratic terms in \eqref{eq 32}, we have
\begin{align}
    \hat{\mathcal{Q}}_k=\mathcal{Q}_k+\hat{\mathcal{Q}}_k-\mathcal{Q}_k
    \label{Q_decompose}
\end{align}
where $ \hat{\mathcal{Q}}_k,\mathcal{Q}_k$ are defined in \eqref{Q_k_def}. Letting $\Delta \mathcal{Q}_k=\hat{\mathcal{Q}}_k-\mathcal{Q}_k$, and using the identity
\begin{align*}
    \|a-c\|_M^2-\|b-c\|_M^2=\|a\|_{M}^2-\|b\|_{M}^2+ 2\langle c, b-a\rangle_M,
\end{align*}
we obtain
\begin{align}
    \Delta \mathcal{Q}_k= \langle x, u^k\rangle_{\tau^{-1}}+\langle y, v^k\rangle_{\sigma^{-1}}+\delta_k,
    \label{eq Delta_Q}
\end{align}
where the zero-mean quadratic residual is
\begin{align}
\delta_k
:=\;&
\frac12
\left(
\|x^{k+1}-x^k\|_{\tau^{-1}P^{-1}}^2
-
\|\hat x^{k+1}-x^k\|_{\tau^{-1}}^2
\right)
\nonumber\\
&+
\frac12
\left(
\|y^{k+1}-y^k\|_{\sigma^{-1}Q^{-1}}^2
-
\|\hat y^{k+1}-y^k\|_{\sigma^{-1}}^2
\right)
\nonumber\\
&+
\frac12
\left(
\|x^{k+1}\|_{\tau^{-1}P^{-1}}^2
-
\|x^k\|_{\tau^{-1}P^{-1}}^2
+
\|x^k\|_{\tau^{-1}}^2
-
\|\hat x^{k+1}\|_{\tau^{-1}}^2
\right)
\nonumber\\
&+
\frac12
\left(
\|y^{k+1}\|_{\sigma^{-1}Q^{-1}}^2
-
\|y^k\|_{\sigma^{-1}Q^{-1}}^2
+
\|y^k\|_{\sigma^{-1}}^2
-
\|\hat y^{k+1}\|_{\sigma^{-1}}^2
\right).
\label{eq:delta-k-def}
\end{align}

Combining \eqref{eq 32} \eqref{eq 33} \eqref{Q_decompose} and \eqref{eq Delta_Q}, we have
\begin{align}
    \begin{aligned}
        H(x^{k+1},y^{k+1},x,y)
\le\;&
D_g^{P^{-1}-I}(x^k;z)-D_g^{P^{-1}-I}(x^{k+1};z)\\
&+
D_{f^*}^{Q^{-1}-I}(y^k;z)-D_{f^*}^{Q^{-1}-I}(y^{k+1};z)\\
&+
R_g^k+R_{f^*}^k
+
\mathcal B_k\\
&+
\mathcal Q_k+\langle x, u^k\rangle_{\tau^{-1}}+\langle y, v^k\rangle_{\sigma^{-1}}+\delta_k,
    \end{aligned}
    \label{eq:realized-pre}
\end{align}
where \(\mathcal B_k\) denotes the remaining bilinear part, which is
\begin{align}
    \begin{aligned}
        \mathcal{B}_k=&\langle A\hat{x}^{k+1},y\rangle -\langle Ax, \hat{y}^{k+1}\rangle + \langle A(x-\hat{x}^{k+1}),\bar{y}^k\rangle + \langle A\bar{x}^{k+1},\hat{y}^{k+1}-y\rangle\\
     &+\langle x^k-\hat{x}^{k+1}-P^{-1}(x^k-x^{k+1}), A^\top y \rangle+ \langle Ax, \hat{y}^{k+1}-y^k-Q^{-1}(y^{k+1}-y^k)\rangle
    \end{aligned}
\end{align}
Using
\[
\bar x^{k+1}=x^k+P^{-1}(x^{k+1}-x^k),
\qquad
\bar y^k=y^k+Q^{-1}(y^k-y^{k-1}),
\]
a direct rearrangement of the bilinear terms gives
\begin{align}
\mathcal B_k
=\;&
\langle A(\bar x^{k+1}-x),
Q^{-1}(y^{k+1}+y^{k-1}-2y^k)\rangle
\nonumber\\
&+
\langle A\bar x^{k+1},v^k\rangle
-
\langle Au^k,\bar y^k\rangle .
\label{eq:Bk-decomp}
\end{align}

Combining \eqref{eq:realized-pre} and \eqref{eq:Bk-decomp}, and using the definition
of \(\Gamma_k\), we obtain
\begin{align}
H(x^{k+1},y^{k+1},x,y)
\le\;&
D_g^{P^{-1}-I}(x^k;z)-D_g^{P^{-1}-I}(x^{k+1};z)
\nonumber\\
&+
D_{f^*}^{Q^{-1}-I}(y^k;z)-D_{f^*}^{Q^{-1}-I}(y^{k+1};z)
\nonumber\\
&+
\langle x,u^k\rangle_{\tau^{-1}}
+
\langle y,v^k\rangle_{\sigma^{-1}}
+
\varepsilon_k
+
\Gamma_k,
\label{eq:realized-with-Gamma}
\end{align}
where
\begin{align}\label{eq:epsilon-def}
\varepsilon_k
:=
R_g^k+R_{f^*}^k+\delta_k
+\langle A\bar x^{k+1},v^k\rangle
-\langle Au^k,\bar y^k\rangle .
\end{align}

It remains to verify that \(\varepsilon_k\) has zero expectation. This follows from
the block update rules. Indeed, for the primal part of $\varepsilon_k$, we have
\[
\mathbb E[u^k\mid\mathcal F_k]=0,
\qquad
\mathbb E[R_g^k\mid\mathcal F_k]=0,
\qquad
\mathbb E[\delta_k^x\mid\mathcal F_k]=0.
\]
For the dual part of $\varepsilon_k$, we have
\[
\mathbb E[v^k\mid\mathcal F_{k+\frac{1}{2}}]=0,
\qquad
\mathbb E[R_{f^*}^k\mid\mathcal F_{k+\frac{1}{2}}]=0,
\qquad
\mathbb E[\delta_k^y\mid\mathcal F_{k+\frac{1}{2}}]=0.
\]
Here \(\delta_k^x\) and \(\delta_k^y\) are the \(x\)- and \(y\)-parts of
\(\delta_k\), respectively. Since \(\bar y^k\) is \(\mathcal F_k\)-measurable and
\(\bar x^{k+1}\) is \(\mathcal F_{k+\frac{1}{2}}\)-measurable, we also have
\[
\mathbb E\langle Au^k,\bar y^k\rangle=0,
\qquad
\mathbb E\langle A\bar x^{k+1},v^k\rangle=0.
\]
Therefore \(\mathbb E[\varepsilon_k]=0\).

Finally, combining \eqref{eq:realized-with-Gamma} with
\eqref{eq:Vk-descent} gives \eqref{eq:realized-one-step}.
\end{proof}

To control the accumulated correction terms in Lemma~\ref{lem:realized-one-step},
we need to estimate $D_g^{P^{-1}-I}$ and $D_{f^*}^{Q^{-1}-I}$.
These terms do not admit a direct descent structure and therefore cannot be
handled by the Lyapunov argument alone.

The idea is to relate them to the primal--dual gap and to distances to the
solution set using saddle-point optimality and convexity.
To decouple these two effects, we allow two (possibly different) saddle points,
leading to the following estimate.

\begin{lemma}\label{lem:correction-bound}
Suppose Assumption~\ref{ass:basic} holds. Let
\(z_1^\star=(x_1^\star,y_1^\star)\) and
\(z_2^\star=(x_2^\star,y_2^\star)\) be two saddle points. Define
\[
\underline p:=\min_{j\in[m]}p_j,
\qquad
\underline q:=\min_{i\in[n]}q_i,
\]
and
\begin{align*}
    \begin{aligned}
        \gamma_4^2
:=
\left\|
\tau^{1/2}(P^{-1/2}-P^{1/2})A^\top Q^{1/2}\sigma^{1/2}
\right\|\\
\gamma_5^2
:=
\left\|
\tau^{1/2}P^{1/2}A^\top(Q^{-1/2}-Q^{1/2})\sigma^{1/2}
\right\|
    \end{aligned}
\end{align*}
Then, for any positive integer \(K\) and any \(z=(x,y)\in\mathcal Z\),
\begin{align}
D_g^{P^{-1}-I}(x^0;z)-D_g^{P^{-1}-I}(x^K;z)
\le\;&
\left(\frac{1}{\underline p}-1\right)
\left(
\Phi(x^0,y_1^\star)-\Phi(x_1^\star,y_1^\star)
\right)
\nonumber\\
&\hspace{-8 em}+
\frac{\gamma_4^2}{2}
\left(
2\|x^K-x_2^\star\|_{\tau^{-1}P^{-1}}^2
+
2\|x^0-x_2^\star\|_{\tau^{-1}P^{-1}}^2
+
\|y-y_1^\star\|_{\sigma^{-1}Q^{-1}}^2
\right),
\label{eq:Dg-correction-bound}
\end{align}
and
\begin{align}
D_{f^*}^{Q^{-1}-I}(y^0;z)-D_{f^*}^{Q^{-1}-I}(y^K;z)
\le\;&
\left(\frac{1}{\underline q}-1\right)
\left(
\Phi(x_1^\star,y_1^\star)-\Phi(x_1^\star,y^0)
\right)
\nonumber\\
&\hspace{-8 em}+
\frac{\gamma_5^2}{2}
\left(
\|x-x_1^\star\|_{\tau^{-1}P^{-1}}^2
+
2\|y^K-y_2^\star\|_{\sigma^{-1}Q^{-1}}^2
+
2\|y^0-y_2^\star\|_{\sigma^{-1}Q^{-1}}^2
\right).
\label{eq:Df-correction-bound}
\end{align}
Consequently,
\begin{align}
&D_g^{P^{-1}-I}(x^0;z)-D_g^{P^{-1}-I}(x^K;z)
+
D_{f^*}^{Q^{-1}-I}(y^0;z)-D_{f^*}^{Q^{-1}-I}(y^K;z)
\nonumber\\
&\qquad\le
\alpha H(x^0,y^0,x_1^\star,y_1^\star)
+
\beta\|z^K-z_2^\star\|_V^2
+
\beta\|z^0-z_2^\star\|_V^2
+
\frac{\beta}{2}\|z-z_1^\star\|_V^2,
\label{eq:correction-bound}
\end{align}
where
\[
\alpha:=
\max\left\{
\frac{1}{\underline p}-1,\,
\frac{1}{\underline q}-1
\right\},
\qquad
\beta:=\max\{\gamma_4^2,\gamma_5^2\}.
\]
\end{lemma}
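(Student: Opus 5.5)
The plan is to prove \eqref{eq:Dg-correction-bound} directly from the definition \eqref{eq:Dg-def}, obtain \eqref{eq:Df-correction-bound} by the symmetric argument, and add the two. By definition,
\[
D_g^{P^{-1}-I}(x^0;z)-D_g^{P^{-1}-I}(x^K;z)
= g_{P^{-1}-I}(x^0)-g_{P^{-1}-I}(x^K)+\langle A(P^{-1}-I)(x^0-x^K),y\rangle .
\]
I will insert and subtract $y_1^\star$ in the bilinear term. This splits the right-hand side into a ``function part''
\[
g_{P^{-1}-I}(x^0)-g_{P^{-1}-I}(x^K)+\langle (P^{-1}-I)(x^0-x^K),A^\top y_1^\star\rangle
\]
and a ``coupling part'' $\langle A(P^{-1}-I)(x^0-x^K),y-y_1^\star\rangle$.

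\textbf{Function part.} The KKT system \eqref{eq:kkt} gives $-(A^j)^\top y_1^\star\in\partial g_j(x_{1,j}^\star)$. For each $j$, define the blockwise Lagrangian gap
\[
\ell_j(u):=g_j(u)-g_j(x_{1,j}^\star)+\langle (A^j)^\top y_1^\star,u-x_{1,j}^\star\rangle .
\]
By the subgradient inequality, $\ell_j(u)\ge 0$ for every $u$. The function part equals $\sum_j(\frac1{p_j}-1)\bigl(\ell_j(x_j^0)-\ell_j(x_j^K)\bigr)$, because the $x_1^\star$ contributions cancel. I will drop $-\ell_j(x_j^K)\le 0$ and bound $(\frac1{p_j}-1)\le(\frac1{\underline p}-1)$, which is legitimate since $\ell_j(x_j^0)\ge0$. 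Then $\sum_j\ell_j(x^0_j)=g(x^0)-g(x_1^\star)+\langle Ax^0-Ax_1^\star,y_1^\star\rangle=\Phi(x^0,y_1^\star)-\Phi(x_1^\star,y_1^\star)$, which is the first term of \eqref{eq:Dg-correction-bound}. The only delicate point in the whole proof is this sign argument: $\ell_j\ge0$ is what allows replacing the blockwise weights by their maximum.

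\textbf{Coupling part.} I will write it as
\[
\bigl\langle \tau^{1/2}(P^{-1/2}-P^{1/2})A^\top Q^{1/2}\sigma^{1/2}\,\sigma^{-1/2}Q^{-1/2}(y-y_1^\star),\ \tau^{-1/2}P^{-1/2}(x^0-x^K)\bigr\rangle ,
\]
which is valid because all the diagonal scalings commute blockwise. Cauchy--Schwarz and Young then bound it by
\[
\frac{\gamma_4^2}{2}\Bigl(\|x^0-x^K\|_{\tau^{-1}P^{-1}}^2+\|y-y_1^\star\|_{\sigma^{-1}Q^{-1}}^2\Bigr).
\]
Finally, $\|x^0-x^K\|^2\le 2\|x^0-x_2^\star\|^2+2\|x^K-x_2^\star\|^2$ in the same norm, which yields \eqref{eq:Dg-correction-bound}.

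\textbf{Dual bound and combination.} The dual bound is identical. It uses $A_ix_1^\star\in\partial f_i^*(y_{1,i}^\star)$, the nonnegative gaps $f_i^*(v)-f_i^*(y^\star_{1,i})-\langle A_ix_1^\star,v-y^\star_{1,i}\rangle$, the identity $\sum_i(\text{gap at }y_i^0)=\Phi(x_1^\star,y_1^\star)-\Phi(x_1^\star,y^0)$, and the factorization through $\tau^{1/2}P^{1/2}A^\top(Q^{-1/2}-Q^{1/2})\sigma^{1/2}$, which produces $\gamma_5^2$. Adding the two bounds gives \eqref{eq:correction-bound}:
\begin{itemize}
\item $\alpha$ dominates both prefactors, and the two $\Phi$-differences sum to $H(x^0,y^0,x_1^\star,y_1^\star)$.
\item $\gamma_4^2\|x^K-x_2^\star\|^2+\gamma_5^2\|y^K-y_2^\star\|^2\le\beta\|z^K-z_2^\star\|_V^2$, and likewise for $z^0$.
\item $\frac{\gamma_4^2}{2}\|y-y_1^\star\|^2+\frac{\gamma_5^2}{2}\|x-x_1^\star\|^2\le\frac\beta2\|z-z_1^\star\|_V^2$.
\end{itemize}
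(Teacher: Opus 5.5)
Your proposal is correct and follows essentially the same route as the paper. The paper lower-bounds $g_{P^{-1}-I}(x^K)$ via the subgradient inequality at $x_1^\star$, which is exactly your step of dropping $-(\frac{1}{p_j}-1)\ell_j(x_j^K)\le 0$; it then uses the same weight bound by $\frac{1}{\underline p}-1$, the same $\gamma_4^2$ Cauchy--Schwarz/Young estimate followed by splitting $x^0-x^K$ through $x_2^\star$, and the symmetric dual argument. Your explicit remark that nonnegativity of the blockwise gaps is what justifies replacing the weights by their maximum (and, when adding the two bounds, by $\alpha$) states a point the paper only uses implicitly.
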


\begin{proof}
We first prove the bound for the primal correction term. By definition,
\begin{align}
    \begin{aligned}
        D_g^{P^{-1}-I}(x^0;z)-D_g^{P^{-1}-I}(x^K;z)
=&
g_{P^{-1}-I}(x^0)-g_{P^{-1}-I}(x^K)\\
&+
\langle A(P^{-1}-I)(x^0-x^K),y\rangle.
    \end{aligned}
    \label{eq:Dg-diff}
\end{align}
Since \(z_1^\star\) is a saddle point, we have
\[
-(A^j)^\top y_1^\star\in \partial g_j((x_1^\star)_j),
\qquad j\in[m].
\]
Hence, by convexity of \(g_j\),
\[
g_j(x_j^K)
\ge
g_j((x_1^\star)_j)
-
\left\langle (A^j)^\top y_1^\star,
x_j^K-(x_1^\star)_j
\right\rangle .
\]
Multiplying by \(p_j^{-1}-1\) and summing over \(j\) gives
\[
g_{P^{-1}-I}(x^K)
\ge
g_{P^{-1}-I}(x_1^\star)
-
\langle A(P^{-1}-I)(x^K-x_1^\star),y_1^\star\rangle .
\]
Substituting this inequality into \eqref{eq:Dg-diff}, we obtain
\begin{align}
&D_g^{P^{-1}-I}(x^0;z)-D_g^{P^{-1}-I}(x^K;z)
\nonumber\\
&\le
g_{P^{-1}-I}(x^0)-g_{P^{-1}-I}(x_1^\star)
+
\langle A(P^{-1}-I)(x^0-x_1^\star),y_1^\star\rangle
\nonumber\\
&\qquad
+
\langle A(P^{-1}-I)(x^0-x^K),y-y_1^\star\rangle .
\label{eq:Dg-split}
\end{align}
For the first three terms on the right-hand side, using again the saddle-point
subgradient condition gives
\begin{align}
&g_{P^{-1}-I}(x^0)-g_{P^{-1}-I}(x_1^\star)
+
\langle A(P^{-1}-I)(x^0-x_1^\star),y_1^\star\rangle
\nonumber\\
&=
\sum_{j=1}^m
\left(\frac{1}{p_j}-1\right)
\left[
g_j(x_j^0)-g_j((x_1^\star)_j)
+
\langle A^j(x_j^0-(x_1^\star)_j),y_1^\star\rangle
\right]
\nonumber\\
&\le
\left(\frac{1}{\underline p}-1\right)
\left[
g(x^0)-g(x_1^\star)
+
\langle A(x^0-x_1^\star),y_1^\star\rangle
\right]
\nonumber\\
&=
\left(\frac{1}{\underline p}-1\right)
\left(
\Phi(x^0,y_1^\star)-\Phi(x_1^\star,y_1^\star)
\right).
\label{eq:Dg-first-bound}
\end{align}
For the remaining cross term, by the definition of \(\gamma_4^2\),
\begin{align}
\left|
\langle A(P^{-1}-I)(x^0-x^K),y-y_1^\star\rangle
\right|
&\le
\frac{\gamma_4^2}{2}
\left(
\|x^0-x^K\|_{\tau^{-1}P^{-1}}^2
+
\|y-y_1^\star\|_{\sigma^{-1}Q^{-1}}^2
\right)
\nonumber\\
&\hspace{-7 em}\le
\frac{\gamma_4^2}{2}
\left(
2\|x^K-x_2^\star\|_{\tau^{-1}P^{-1}}^2
+
2\|x^0-x_2^\star\|_{\tau^{-1}P^{-1}}^2
+
\|y-y_1^\star\|_{\sigma^{-1}Q^{-1}}^2
\right).
\label{eq:Dg-cross-bound}
\end{align}
Combining \eqref{eq:Dg-split}, \eqref{eq:Dg-first-bound}, and
\eqref{eq:Dg-cross-bound} proves \eqref{eq:Dg-correction-bound}.

The proof of \eqref{eq:Df-correction-bound} is analogous. Indeed, since
\(z_1^\star\) is a saddle point,
\[
A_i x_1^\star\in \partial f_i^*((y_1^\star)_i),
\qquad i\in[n],
\]
and repeating the same argument with \(f_i^*\), \(Q\), and the operator norm
constant \(\gamma_5^2\) yields \eqref{eq:Df-correction-bound}.

It remains to combine the two estimates. By the definition of the gap kernel,
\[
H(x^0,y^0,x_1^\star,y_1^\star)
=
\Phi(x^0,y_1^\star)-\Phi(x_1^\star,y^0),
\]
and therefore
\begin{align*}
H(x^0,y^0,x_1^\star,y_1^\star)
=
\bigl[
\Phi(x^0,y_1^\star)-\Phi(x_1^\star,y_1^\star)
\bigr]
+
\bigl[
\Phi(x_1^\star,y_1^\star)-\Phi(x_1^\star,y^0)
\bigr].
\end{align*}
 Summing \eqref{eq:Dg-correction-bound} and \eqref{eq:Df-correction-bound}, and using the
definitions of \(\alpha\) and \(\beta\), yields \eqref{eq:correction-bound}.
\end{proof}

To control the random linear terms in Lemma~\ref{lem:realized-one-step}, we introduce
an auxiliary sequence driven by the stochastic errors \(u^k\) and \(v^k\). This
allows us to bound their accumulated contribution uniformly over a bounded test
region \(B\).

\begin{lemma}\label{lem:random-linear-bound}
Suppose Assumptions~\ref{ass:basic} and~\ref{ass:stepsizes-general} hold.
Let \(z^\star\in\mathcal Z^\star\) be a saddle point. Given
\(\tilde z^0=(\tilde x^0,\tilde y^0)\), define
\[
\tilde x^{k+1}=\tilde x^k+Pu^k,
\qquad
\tilde y^{k+1}=\tilde y^k+Qv^k,
\qquad k\ge 0.
\]
Then, for any \(K\ge 1\), any \(z=(x,y)\in\mathcal Z\), and any set
\(B\subseteq\mathcal Z\), we have
\begin{align}
\sum_{k=0}^{K-1}\langle x,u^k\rangle_{\tau^{-1}}
&\le
\frac12\|\tilde x^0-x\|_{\tau^{-1}P^{-1}}^2
+\frac12\sum_{k=0}^{K-1}\|u^k\|_{\tau^{-1}P}^2
+\sum_{k=0}^{K-1}\langle \tilde x^k,u^k\rangle_{\tau^{-1}},
\label{eq:uk-linear-bound}
\\
\sum_{k=0}^{K-1}\langle y,v^k\rangle_{\sigma^{-1}}
&\le
\frac12\|\tilde y^0-y\|_{\sigma^{-1}Q^{-1}}^2
+\frac12\sum_{k=0}^{K-1}\|v^k\|_{\sigma^{-1}Q}^2
+\sum_{k=0}^{K-1}\langle \tilde y^k,v^k\rangle_{\sigma^{-1}} .
\label{eq:vk-linear-bound}
\end{align}
Moreover,
\begin{align}\label{eq:uv-summability}
\mathbb E\!\left[
\sum_{k=0}^{\infty}
\left(
\|u^k\|_{\tau^{-1}P}^2
+
\|v^k\|_{\sigma^{-1}Q}^2
\right)
\right]
\le
\frac{2}{1-2\gamma_1^2}
\mathbb E\|z^0-z^\star\|_V^2 .
\end{align}
Consequently,
\begin{align}
    \begin{aligned}
        \mathbb{E}&\left[\sup_{z\in B}\left\{ \sum_{k=0}^{K-1}\langle x, u^k\rangle_{\tau^{-1}}+\langle y, v^k\rangle_{\sigma^{-1}}\right\}\right]\\
        &\hspace{1em}\leq \mathbb{E}\sup_{z \in B}\left\{\frac{1}{2}\|\tilde{x}^0-x \|_{\tau^{-1}P^{-1}}^2+\frac{1}{2}\|\tilde{y}^0-y\|_{\sigma^{-1}Q^{-1}}^2\right\} 
        + \frac{1}{1-2\gamma_1^2}\mathbb{E}\|z^0-z^\star\|_V^2.
    \end{aligned}
    \label{eq:random-linear-sup-bound}
\end{align}
\end{lemma}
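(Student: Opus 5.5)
The plan is to use the standard ``ghost sequence'' argument for stochastic errors and then bound the squared residuals by the summable increments from Lemma~\ref{lem:fundamental-recursion}.

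\emph{Deterministic bounds \eqref{eq:uk-linear-bound}--\eqref{eq:vk-linear-bound}.}
We expand the auxiliary recursion in the $\tau^{-1}P^{-1}$ metric. Since $\tilde x^{k+1}-x=\tilde x^k-x+Pu^k$, we get
\[
\tfrac12\|\tilde x^{k+1}-x\|_{\tau^{-1}P^{-1}}^2
=\tfrac12\|\tilde x^{k}-x\|_{\tau^{-1}P^{-1}}^2
+\langle \tilde x^k-x,u^k\rangle_{\tau^{-1}}
+\tfrac12\|u^k\|_{\tau^{-1}P}^2 .
\]
Here we used that $P$ and $\tau$ are diagonal and commute, so $\langle a,Pu\rangle_{\tau^{-1}P^{-1}}=\langle a,u\rangle_{\tau^{-1}}$ and $\|Pu\|_{\tau^{-1}P^{-1}}^2=\|u\|_{\tau^{-1}P}^2$. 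Rearranging gives $\langle x,u^k\rangle_{\tau^{-1}}$ in terms of a telescoping difference, $\langle\tilde x^k,u^k\rangle_{\tau^{-1}}$, and $\tfrac12\|u^k\|^2$. Summing over $k=0,\dots,K-1$ and dropping $-\tfrac12\|\tilde x^K-x\|^2\le0$ gives \eqref{eq:uk-linear-bound}. The dual bound \eqref{eq:vk-linear-bound} follows identically with $Q,\sigma$.

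\emph{Summability \eqref{eq:uv-summability}.}
We compute $\mathbb E[\|u^k\|_{\tau^{-1}P}^2\mid\mathcal F_k]$ blockwise. Writing $d_j:=\hat x^{k+1}_j-x^k_j$, the block $u^k_j$ equals $d_j(1-1/p)$ with probability $p$ and $d_j$ otherwise. Its conditional second moment is therefore $(1/p-1)\|d_j\|^2\le \tfrac1p\|d_j\|^2$, with the factor $\tau^{-1}p$ giving $\le \tau^{-1}\|d_j\|^2$. On the other hand, $\mathbb E[\|x^{k+1}-x^k\|^2_{\tau^{-1}P^{-1}}\mid\mathcal F_k]=\sum_j \tau^{-1}\|d_j\|^2$. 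Hence $\mathbb E\|u^k\|_{\tau^{-1}P}^2\le \mathbb E\|x^{k+1}-x^k\|_{\tau^{-1}P^{-1}}^2$, and the same holds for $v^k$ conditioned on $\mathcal F_{k+\frac12}$. Summing and invoking \eqref{f 33} of Lemma~\ref{lem:fundamental-recursion} yields \eqref{eq:uv-summability}.

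\emph{Supremum bound \eqref{eq:random-linear-sup-bound}.}
We add the two deterministic bounds and take $\sup_{z\in B}$. Only the $\tfrac12\|\tilde z^0-z\|^2$ terms depend on $z$, so the remaining terms pass outside the supremum. We then take expectations. The cross terms vanish: $\tilde x^k$ is $\mathcal F_k$-measurable (it is built from $u^0,\dots,u^{k-1}$) and $\mathbb E[u^k\mid\mathcal F_k]=0$; likewise $\tilde y^k$ is $\mathcal F_{k}\subseteq\mathcal F_{k+\frac12}$-measurable and $\mathbb E[v^k\mid\mathcal F_{k+\frac12}]=0$. The squared-residual sums contribute $\tfrac12\cdot\frac{2}{1-2\gamma_1^2}\mathbb E\|z^0-z^\star\|_V^2$, which is the stated bound.

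The main point of care is the measurability and integrability justifying $\mathbb E\langle\tilde x^k,u^k\rangle=0$, together with the exact conditional variance computation for $u^k$. The sup over $B$ causes no difficulty because $\tilde z^k$ is independent of $z$; this is precisely why the ghost sequence is used.
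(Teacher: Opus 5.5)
Your proposal is correct and follows essentially the same argument as the paper: the same telescoping expansion of the auxiliary sequence in the $\tau^{-1}P^{-1}$ (resp.\ $\sigma^{-1}Q^{-1}$) metric, the bound $\mathbb E\|u^k\|_{\tau^{-1}P}^2\le\mathbb E\|x^{k+1}-x^k\|_{\tau^{-1}P^{-1}}^2$ combined with the square-summability estimate of Lemma~\ref{lem:fundamental-recursion}, and the vanishing of the cross terms $\langle\tilde x^k,u^k\rangle_{\tau^{-1}}$, $\langle\tilde y^k,v^k\rangle_{\sigma^{-1}}$ by measurability. The only cosmetic difference is that you compute the conditional second moment $(1/p-1)\|d_j\|^2$ blockwise, whereas the paper expands $\|u^k\|_{\tau^{-1}P}^2$ and evaluates the cross term in conditional expectation; both give the same identity.
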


\begin{proof}
We first prove \eqref{eq:uk-linear-bound}. By the definition of
\(\tilde x^{k+1}\),
\[
\frac12\|\tilde x^{k+1}-x\|_{\tau^{-1}P^{-1}}^2
=
\frac12\|\tilde x^k-x\|_{\tau^{-1}P^{-1}}^2
+
\langle u^k,\tilde x^k-x\rangle_{\tau^{-1}}
+
\frac12\|u^k\|_{\tau^{-1}P}^2 .
\]
Rearranging and summing over \(k=0,\ldots,K-1\) gives
\[
\sum_{k=0}^{K-1}\langle x-\tilde x^k,u^k\rangle_{\tau^{-1}}
\le
\frac12\|\tilde x^0-x\|_{\tau^{-1}P^{-1}}^2
+
\frac12\sum_{k=0}^{K-1}\|u^k\|_{\tau^{-1}P}^2 ,
\]
which implies \eqref{eq:uk-linear-bound}. The proof of
\eqref{eq:vk-linear-bound} is identical.

Next, by the definition of \(u^k\),
\[
u^k=\hat x^{k+1}-x^k-P^{-1}(x^{k+1}-x^k).
\]
we have
\begin{align}
    \begin{aligned}
     \|u^k\|_{\tau^{-1}P}^2=&\|x^k-\hat{x}^{k+1}\|_{\tau^{-1}P}^2+\|x^{k+1}-x^{k}\|_{\tau^{-1}P^{-1}}^2\\
     &-2\langle x^k-\hat{x}^{k+1},x^k-x^{k+1}\rangle_{\tau^{-1}}.
    \end{aligned}
\end{align}
Since $x^k,\hat{x}^{k+1}$ are $\mathcal{F}_k$ measurable and $\mathbb E[x^k-x^{k+1}\mid \mathcal{F}_k]=P(x^k-\hat{x}^{k+1})$, we have
\begin{align}
    \begin{aligned}
            \mathbb{E} \left[\langle x^k-\hat{x}^{k+1},x^k-x^{k+1}\rangle_{\tau^{-1}}\mid \mathcal{F}_k\right]
            &= \langle x^k-\hat{x}^{k+1}, P(x^k-\hat{x}^{k+1})\rangle_{\tau^{-1}}\\
            &= \|x^k-\hat{x}^{k+1}\|_{\tau^{-1}P}^2.
    \end{aligned}
\end{align}
Therefore,
\begin{align}
    \begin{aligned}
          \mathbb{E}\left[\|u^k\|_{\tau^{-1}P}^2\right]&= \mathbb{E}\left[\|x^{k+1}-x^{k}\|_{\tau^{-1}P^{-1}}^2\right]-\mathbb{E}\left[\|x^k-\hat{x}^{k+1}\|_{\tau^{-1}P}^2\right]\\
          &\leq \mathbb{E}\left[\|x^{k+1}-x^{k}\|_{\tau^{-1}P^{-1}}^2\right].
    \end{aligned}
     \label{f 68}
\end{align}
Similarly,
\begin{align}
    \begin{aligned}
            \mathbb{E}\left[\|v^k\|_{\sigma^{-1}Q}^2\right]&= \mathbb{E}\left[\|y^{k+1}-y^{k}\|_{\sigma^{-1}Q^{-1}}^2\right]-\mathbb{E}\left[\|y^k-\hat{y}^{k+1}\|_{\sigma^{-1}Q}^2\right]\\
            &\leq \mathbb{E}\left[\|y^{k+1}-y^{k}\|_{\sigma^{-1}Q^{-1}}^2\right].
    \end{aligned}
    \label{f 69}
\end{align}
Combining \eqref{f 68} and \eqref{f 69}, we have
\begin{align}
\mathbb{E}\left[\sum_{k=0}^\infty\left(\|u^k\|_{\tau^{-1}P}^2+\|v^k\|_{\sigma^{-1}Q}^2\right)\right] \leq \mathbb{E}\left[\sum_{k=0}^\infty\|z^{k+1}-z^k\|_V^2\right] \leq \frac{2}{1-2\gamma_1^2}\mathbb{E}\|z^0-z^\star\|_V^2,
\label{f 70}
\end{align}
where the last inequality comes from \eqref{f 33}.

Finally, summing \eqref{eq:uk-linear-bound} and \eqref{eq:vk-linear-bound} and taking
the supremum over \(z\in B\), we obtain
\begin{align}
    \begin{aligned}
        &\sup_{z\in B}\left\{
\sum_{k=0}^{K-1}
\left(
\langle x,u^k\rangle_{\tau^{-1}}
+
\langle y,v^k\rangle_{\sigma^{-1}}
\right)\right\}
\\
&\le
\sup_{z\in B}
\left\{
\frac12\|\tilde x^0-x\|_{\tau^{-1}P^{-1}}^2
+
\frac12\|\tilde y^0-y\|_{\sigma^{-1}Q^{-1}}^2
\right\}
\\
&\quad
+\frac12
\sum_{k=0}^{K-1}
\left(
\|u^k\|_{\tau^{-1}P}^2
+
\|v^k\|_{\sigma^{-1}Q}^2
\right)
+
M_K ,
    \end{aligned}
    \label{eq correlation_terms}
\end{align}
where
\[
M_K:=
\sum_{k=0}^{K-1}
\left(
\langle \tilde x^k,u^k\rangle_{\tau^{-1}}
+
\langle \tilde y^k,v^k\rangle_{\sigma^{-1}}
\right).
\]
Since \(\tilde x^k\) and \(\tilde y^k\) are measurable with respect to $\mathcal{F}_k$ and \(u^k,v^k\) are conditionally mean-zero with respect to $\mathcal{F}_k$ and $\mathcal{F}_{k+\frac{1}{2}}$ respectively, \(\mathbb E[M_K]=0\). Taking expectation on \eqref{eq correlation_terms} and using
\eqref{eq:uv-summability} gives \eqref{eq:random-linear-sup-bound}.
\end{proof}

\subsection{Ergodic $\mathcal O(1/K)$ rate}

Define the ergodic averages
\begin{align}\label{eq:ergodic-averages}
X^K:=\frac1K\sum_{k=1}^K x^k,
\qquad
Y^K:=\frac1K\sum_{k=1}^K y^k .
\end{align}
For a bounded set $B\subseteq\mathcal Z$, define the restricted primal--dual gap
\begin{align}\label{eq:restricted-gap}
G_B(\bar x,\bar y)
:=
\sup_{z=(x,y)\in B} H(\bar x,\bar y,x,y).
\end{align}

The next theorem is the main result of this section.

\begin{theorem}\label{thm:general-convex-gap}
Suppose Assumptions~\ref{ass:basic} and \ref{ass:stepsizes-general} hold.
Let $B\subseteq\mathcal Z$ be bounded. Then there exists a constant $C_B>0$,
depending only on the initial point, the saddle-point set, the step sizes, and $B$,
such that
\begin{align}\label{eq:main-gap-rate}
\mathbb E\!\left[G_B(X^K,Y^K)\right]
\le
\frac{C_B}{K},
\qquad
\forall K\ge 1 .
\end{align}
Hence the ergodic sequence generated by DSPDHG converges at rate
$\mathcal O(1/K)$ in the expected restricted primal--dual gap.
\end{theorem}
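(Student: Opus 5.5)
The plan is to sum the realized one-step bound of Lemma~\ref{lem:realized-one-step} over $k=0,\dots,K-1$, use convexity of $H$ in its first two arguments to pass to the ergodic averages, and then bound every accumulated term uniformly over $z\in B$. Lemmas~\ref{lem:correction-bound} and~\ref{lem:random-linear-bound} supply these uniform bounds.

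\textbf{Step 1: summing and convexity.} For fixed $z=(x,y)$, the map $(\bar x,\bar y)\mapsto H(\bar x,\bar y,x,y)=g(\bar x)+\langle A\bar x,y\rangle-\langle Ax,\bar y\rangle+f^*(\bar y)-g(x)-f^*(y)$ is convex. Hence
\[
K\,H(X^K,Y^K,x,y)\le\sum_{k=0}^{K-1}H(x^{k+1},y^{k+1},x,y).
\]
Summing \eqref{eq:realized-one-step} gives the following. The $D$-terms telescope to $D_g^{P^{-1}-I}(x^0;z)-D_g^{P^{-1}-I}(x^K;z)$ plus the analogous dual term. The $V_k$-terms telescope to $V_0(x^0-x,y^0-y)-V_K(x^K-x,y^K-y)\le V_0(x^0-x,y^0-y)$, using \eqref{eq:Vk-positive}. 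The terms $-V(\cdot)$ are nonpositive by \eqref{eq:V-coercive} and can be dropped. What remains is the random linear sum $\sum_k\langle x,u^k\rangle_{\tau^{-1}}+\langle y,v^k\rangle_{\sigma^{-1}}$ and the noise $\sum_k\varepsilon_k$.

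\textbf{Step 2: taking the supremum over $B$ before the expectation.} This is the delicate point, because the supremum is taken before the expectation. Several pieces carry no $z$-dependence and can be handled after the expectation: $\sum_k\varepsilon_k$ has zero expectation by Lemma~\ref{lem:realized-one-step}, provided I check that $\varepsilon_k$ really does not depend on $z$. I need to verify this, since $R_g^k,R_{f^*}^k,\delta_k$ and the two bilinear terms indeed involve only iterates.

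The remaining pieces I bound uniformly in $z$:
\begin{itemize}
\item[(a)] $V_0(x^0-x,y^0-y)\le\|z^0-z\|_V^2$, by Cauchy--Schwarz together with \eqref{eq:gamma2} (recall $y^{-1}=y^0$, so the extra terms vanish). Its supremum over the bounded set $B$ is a deterministic constant.
\item[(b)] The $D$-differences. I apply \eqref{eq:correction-bound} with $z_1^\star=z_2^\star=z^\star$. This gives $\alpha H(z^0,z^\star)+\beta\|z^K-z^\star\|_V^2+\beta\|z^0-z^\star\|_V^2+\tfrac{\beta}{2}\|z-z^\star\|_V^2$. The last term is bounded by $\sup_B$ of a norm. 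The middle random term has expectation at most $2\beta\|z^0-z^\star\|_V^2$ by \eqref{f 32}, and crucially it does not depend on $z$.
\item[(c)] The random linear sum. This is handled by \eqref{eq:random-linear-sup-bound} with $\tilde z^0=z^0$. It is exactly designed so that the supremum sits inside the expectation, giving $\tfrac12\sup_B\|z^0-z\|_V^2+\frac{1}{1-2\gamma_1^2}\|z^0-z^\star\|_V^2$.
\end{itemize}
I must check that $\beta$ is finite. This is automatic, since $\gamma_4^2$ and $\gamma_5^2$ are operator norms.

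\textbf{Step 3: collecting constants.} Collecting the bounds yields
\[
K\,\mathbb E[G_B(X^K,Y^K)]\le C_B,
\]
with
\[
C_B=\tfrac32\sup_{z\in B}\|z^0-z\|_V^2+\tfrac{\beta}{2}\sup_{z\in B}\|z-z^\star\|_V^2+\alpha H(z^0,z^\star)+\Bigl(3\beta+\tfrac{1}{1-2\gamma_1^2}\Bigr)\|z^0-z^\star\|_V^2 .
\]
Here $z^0$ is deterministic. If $H(z^0,z^\star)$ is infinite (for instance if $x^0\notin\mathrm{dom}\,g$), the statement should be read with $x^0,y^0$ in the domains, which I will assume.

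The main obstacle is the interchange of supremum and expectation. It is resolved only because every $z$-dependent random quantity is either linear in $z$ (handled by the ghost sequence of Lemma~\ref{lem:random-linear-bound}) or is split by Lemma~\ref{lem:correction-bound} into $z$-independent random terms plus deterministic $z$-terms. Step 2 thus amounts to carefully checking that no cross term mixing $z$ with randomness survives, in particular in $\varepsilon_k$ and in $-V_K$.
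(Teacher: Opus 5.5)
Your proposal is correct and follows the paper's proof essentially step for step. Like the paper, you apply Jensen to the summed realized one-step bound, drop $V_K$ and the $V$-terms, then use Lemma~\ref{lem:correction-bound} with $z_1^\star=z_2^\star=z^\star$ together with $\mathbb E\|z^K-z^\star\|_V^2\le 2\,\mathbb E\|z^0-z^\star\|_V^2$, and finally \eqref{eq:random-linear-sup-bound} with $\tilde z^0=z^0$.

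The only difference is in the constant. Since $y^{-1}=y^0$, one has exactly $V_0(x^0-x,y^0-y)=\tfrac12\|z-z^0\|_V^2$, with no need for Cauchy--Schwarz or \eqref{eq:gamma2}. The paper's constant therefore carries $1$ rather than your $\tfrac32$ in front of $\sup_{z\in B}\|z-z^0\|_V^2$.
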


\begin{proof}
Fix $z=(x,y)\in B$. Since $H(\cdot,\cdot,x,y)$ is convex in its first two
arguments, Jensen's inequality gives
\[
H(X^K,Y^K,x,y)
\le
\frac1K\sum_{k=0}^{K-1}
H(x^{k+1},y^{k+1},x,y).
\]
Therefore,
\begin{align}\label{eq:ergodic-gap-jensen}
K\,G_B(X^K,Y^K)
\le
\sup_{z\in B}
\sum_{k=0}^{K-1}
H(x^{k+1},y^{k+1},x,y).
\end{align}

Summing the realized one-step inequality \eqref{eq:realized-one-step} from
$k=0$ to $K-1$, using \eqref{eq:ergodic-gap-jensen}, and dropping the nonnegative
terms
\[
V_K(x^K-x,y^K-y),
\qquad
\sum_{k=0}^{K-1}V(x^{k+1}-x^k,y^k-y^{k-1}),
\]
we obtain
\begin{align}
K\,\mathbb E\!\left[G_B(X^K,Y^K)\right]
\le\;&
\mathbb E\!\left[
\sup_{z\in B}
\left\{
D_g^{P^{-1}-I}(x^0;z)-D_g^{P^{-1}-I}(x^K;z)
\right.\right.
\nonumber\\
&\hspace{7em}
\left.\left.
+
D_{f^*}^{Q^{-1}-I}(y^0;z)-D_{f^*}^{Q^{-1}-I}(y^K;z)
\right\}
\right]
\nonumber\\
&+
\mathbb E\!\left[
\sup_{z\in B}
\left\{\sum_{k=0}^{K-1}
\left(
\langle x,u^k\rangle_{\tau^{-1}}
+
\langle y,v^k\rangle_{\sigma^{-1}}
\right)\right\}
\right]
\nonumber\\
&+
\mathbb E\!\left[
\sup_{z\in B}
V_0(x^0-x,y^0-y)
\right].
\label{eq:ergodic-proof-start}
\end{align}
Here the martingale residuals $\varepsilon_k$ vanish after taking expectation.

Since $y^{-1}=y^0$, the definition of $V_0$ gives
\[
V_0(x^0-x,y^0-y)
=
\frac12\|z-z^0\|_V^2 .
\]
Next, applying the weighted correction bound \eqref{eq:correction-bound} with
$z_1^\star=z_2^\star=z^\star\in\mathcal Z^\star$ yields
\begin{equation}
    \begin{aligned}
        &\mathbb E\!\left[
\sup_{z\in B}
\left\{
D_g^{P^{-1}-I}(x^0;z)-D_g^{P^{-1}-I}(x^K;z)
+
D_{f^*}^{Q^{-1}-I}(y^0;z)-D_{f^*}^{Q^{-1}-I}(y^K;z)
\right\}
\right]\\
&\le
\alpha\,\mathbb E\!\left[H(x^0,y^0,x^\star,y^\star)\right]
+
\beta\,\mathbb E\|z^K-z^\star\|_V^2
+
\beta\,\mathbb E\|z^0-z^\star\|_V^2
+
\frac{\beta}{2}
\mathbb E\!\left[\sup_{z\in B}\|z-z^\star\|_V^2\right].
    \end{aligned}
    \label{eq:ergodic-correction-bound}
\end{equation}
Moreover, by  \eqref{eq:random-linear-sup-bound} with
$\tilde x^0=x^0$ and $\tilde y^0=y^0$,
\begin{align}
&\mathbb E\!\left[
\sup_{z\in B}
\left\{\sum_{k=0}^{K-1}
\left(
\langle x,u^k\rangle_{\tau^{-1}}
+
\langle y,v^k\rangle_{\sigma^{-1}}
\right)\right\}
\right]
\nonumber\\
&\le
\frac12\mathbb E\!\left[
\sup_{z\in B}
\|z-z^0\|_V^2
\right]
+
\frac{1}{1-2\gamma_1^2}
\mathbb E\|z^0-z^\star\|_V^2 .
\label{eq:ergodic-stochastic-bound}
\end{align}

Combining \eqref{eq:ergodic-proof-start}, \eqref{eq:ergodic-correction-bound},
and \eqref{eq:ergodic-stochastic-bound}, and using
\[
\mathbb E\|z^K-z^\star\|_V^2
\le
2\,\mathbb E\|z^0-z^\star\|_V^2,
\]
we obtain
\begin{align}
K\,\mathbb E\!\left[G_B(X^K,Y^K)\right]
\le\;&
\alpha\,\mathbb E\!\left[H(x^0,y^0,x^\star,y^\star)\right]
+
\left(
3\beta+\frac{1}{1-2\gamma_1^2}
\right)
\mathbb E\|z^0-z^\star\|_V^2
\nonumber\\
&+
\frac{\beta}{2}
\mathbb E\!\left[
\sup_{z\in B}\|z-z^\star\|_V^2
\right]
+
\mathbb E\!\left[
\sup_{z\in B}\|z-z^0\|_V^2
\right].
\label{eq:ergodic-final-bound}
\end{align}
The right-hand side is finite because $B$ is bounded. Defining this quantity as
$C_B$ and dividing both sides by $K$ proves \eqref{eq:main-gap-rate}.
\end{proof}

\begin{remark}\label{rem:CB-explicit}
An explicit admissible constant in Theorem~\ref{thm:general-convex-gap} is
\begin{align}
C_B
:={}&
\alpha\,\mathbb E\!\left[H(x^0,y^0,x^\star,y^\star)\right]
+
\left(
3\beta+\frac{1}{1-2\gamma_1^2}
\right)
\mathbb E\|z^0-z^\star\|_V^2
\nonumber\\
&+
\frac{\beta}{2}\,
\mathbb E\!\left[\sup_{z\in B}\|z-z^\star\|_V^2\right]
+
\mathbb E\!\left[\sup_{z\in B}\|z-z^0\|_V^2\right],
\label{eq:CB-explicit}
\end{align}
where $z^\star\in\mathcal Z^\star$ is arbitrary.
\end{remark}

\begin{remark}
Theorem~\ref{thm:general-convex-gap} extends the classical
$\mathcal O(1/K)$ expected restricted-gap guarantee for SPDHG to the doubly
stochastic setting. The main additional difficulty is that stochasticity appears
on both primal and dual sides, which requires controlling two correction sequences,
$u^k$ and $v^k$, instead of a single one-sided correction term.
\end{remark}

\section{Linear Convergence under Smoothed Quadratic Growth}
\label{sec:linear}

We now show that a restarted variant of DSPDHG achieves  linear convergence 
under a regularity condition formulated in terms of a smoothed primal--dual gap.
Unlike strong convexity--concavity assumptions imposed on specific components of the
objective, the condition below is expressed directly in terms of the saddle-point
geometry, and is therefore more flexible.

\subsection{Smoothed quadratic growth}

Recall the smoothed gap functional
\[
G_\mu(\bar z,\dot z)
:=
\sup_{z\in\mathcal Z}
\left\{
H(\bar x,\bar y,x,y)
-
\frac{\mu}{2}\|z-\dot z\|_V^2
\right\},
\qquad
\bar z=(\bar x,\bar y),\ \dot z\in\mathcal Z,
\]
introduced in \eqref{eq:smoothed-gap}. The notion of smoothed quadratic growth with
respect to $G_\mu$ was introduced in \cite{fercoq2022quadratic} and has since been
used in several related works.

\begin{definition}[Smoothed quadratic growth]
Suppose $\mu>0$. We say that the smoothed duality gap $G_\mu$ satisfies quadratic
growth on a  set $\mathcal R$ with parameter $\zeta$ if, for every
$\bar z\in\mathcal R$ and every $z^\star\in\mathcal Z^\star$,
\begin{align}\label{eq:sqg-main}
G_\mu(\bar z,z^\star)
\ge
\zeta\,\dist_V(\bar z,\mathcal Z^\star)^2.
\end{align}
\end{definition}

This condition lower bounds the smoothed gap by the squared distance to the solution
set in the weighted geometry induced by $\|\cdot\|_V$. It plays the role of an error
bound condition and serves as the key ingredient for establishing linear convergence.

\begin{remark}
The smoothed quadratic growth condition holds in a variety of settings, including
strongly convex--strongly concave problems \cite{fercoq2022quadratic}, linear programs
\cite{fercoq2022quadratic}, quadratic programs (QPs) \cite{lu2025practical}, and piecewise linear quadratic (PLQ) functions
\cite{fercoq2022quadratic}. In particular, the two classes of convex problems
considered in our numerical experiments, namely soft-margin SVM and model predictive
control (MPC), fall into the PLQ and QP categories, respectively, and therefore satisfy
this condition automatically.
\end{remark}

Other regularity conditions have also been considered in the literature, such as metric
subregularity (see \cite[Assumption 2]{alacaoglu2022convergence}) and
the two-sided quadratic functional growth (QFG) condition (see
\cite[Definition 2.7]{Hquadraticgrowth}). For quadratic programming,
\cite{lu2025practical} showed that smoothed quadratic growth can be implied by metric
subregularity. However, whether such an implication holds more generally remains open.
Moreover, to the best of our knowledge, there is currently no result clarifying whether
smoothed quadratic growth and two-sided QFG imply each other.

\subsection{Restarted scheme}

We consider a restarted outer scheme built from the ergodic output of DSPDHG.
Given an outer iterate $\mathbf{Z}^t\in\mathcal Z$, we run DSPDHG for $K$ inner
iterations initialized at $z^0=\mathbf{Z}^t$, and define
\begin{align}\label{eq:restart-output}
\mathbf{Z}^{t+1}
=
\frac{1}{K}\sum_{k=1}^K z^k
=
\left(
\frac{1}{K}\sum_{k=1}^K x^k,\,
\frac{1}{K}\sum_{k=1}^K y^k
\right).
\end{align}
Equivalently, if $\operatorname{DSPDHG}(\mathbf{Z}^t,K)$ denotes the one-epoch
ergodic output obtained by running $K$ iterations of Algorithm~\ref{alg:DSPDHG}
from the initial point $\mathbf{Z}^t$, then
\[
\mathbf{Z}^{t+1}=\operatorname{DSPDHG}(\mathbf{Z}^t,K).
\]

\begin{algorithm}[t]
\caption{Restarted DSPDHG}
\label{alg:restarted-DSPDHG}
\begin{algorithmic}[1]
\STATE \textbf{Input:} initial point $\mathbf{Z}^0\in\mathcal Z$ and epoch length $K$
\FOR{$t=0,1,2,\dots$}
    \STATE Run Algorithm~\ref{alg:DSPDHG} for $K$ iterations with $z^0=\mathbf{Z}^t$
    \STATE Set $\mathbf{Z}^{t+1}:=\frac1K\sum_{k=1}^K z^k$
\ENDFOR
\end{algorithmic}
\end{algorithm}

\begin{assumption}\label{ass:sqg-main}
There exist constants $\mu>0$ and $\zeta>0$, together with a compact set
$\mathcal{R}\subseteq\mathcal{Z}$, such that $G_\mu$ satisfies quadratic growth
on $\mathcal{R}$ with parameter $\zeta$. Moreover, the outer iterates satisfy
$\mathbf{Z}^t\in\mathcal{R}$ for all $t\ge 0$.
\end{assumption}

\begin{remark}
The requirement $\mathbf{Z}^t\in\mathcal{R}$ for all $t\ge 0$ may appear somewhat
restrictive, but it is needed for the linear convergence analysis. In the strongly convex
case, where one may take $\mathcal{R}=\mathcal{Z}$, this condition is automatic.
In other settings, however, such as quadratic programming \cite{lu2025practical},
the set $\mathcal{R}$ must be chosen as a proper compact subset of $\mathcal{Z}$,
and verifying that the iterates remain in $\mathcal{R}$ can be nontrivial. The difficulty
comes from the fact that the trajectory is only controlled in expectation, so one cannot
exclude low-probability excursions far away from $\mathcal{R}$. A similar assumption
is also imposed in \cite[Assumption 2]{alacaoglu2022convergence} in the linear
convergence analysis of SPDHG.
\end{remark}

\subsection{Linear convergence of the restarted method}
We can now state the main theorem of this section.
\begin{theorem}\label{thm:linear-restarted}
Suppose Assumptions~\ref{ass:basic}, \ref{ass:stepsizes-general}, and
\ref{ass:sqg-main} hold. Let $\{\mathbf Z^t\}_{t\ge 0}$ be generated by
Algorithm~\ref{alg:restarted-DSPDHG}. Assume that the epoch length $K$ satisfies
\begin{align}\label{eq:K-condition-full}
K
\ge
\max\left\{
\frac{4+2\beta}{\mu},
\ \frac{100}{\zeta}\left(\alpha\zeta+5\beta+2+\frac{1}{1-2\gamma_1^2}\right),
\ \frac{400\beta}{\zeta}
\right\}.
\end{align}
Then the restarted DSPDHG scheme converges linearly. More precisely, there exists
a constant $M>0$ such that
\begin{align}\label{eq:linear-gap}
\mathbb E\!\left[
G_\mu(\mathbf Z^t,\Pi^{t-1})
\right]
\le
M\,5^{-t},
\qquad \forall t\ge 1,
\end{align}
where $\Pi^{t-1}:=\proj_{\mathcal Z^\star}(\mathbf Z^{t-1})$. Consequently,
\begin{align}\label{eq:linear-distance}
\mathbb E\!\left[\dist_V(\mathbf Z^t,\mathcal Z^\star)^2\right]
\le
\frac{M}{\zeta}\,5^{-t},
\qquad \forall t\ge 1.
\end{align}
\end{theorem}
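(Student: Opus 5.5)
The proof first establishes a one-epoch bound on the smoothed gap and then turns it into a contraction using smoothed quadratic growth. Fix an epoch $t$ and write $z^0=\mathbf Z^t$ and $\Pi^t=\proj_{\mathcal Z^\star}(\mathbf Z^t)$. We rerun the argument of Theorem~\ref{thm:general-convex-gap}, but with the supremum taken over all of $\mathcal Z$ with the penalty $-\frac{\mu}{2}\|z-\Pi^t\|_V^2$, instead of over a bounded set $B$. Jensen's inequality, summation of \eqref{eq:realized-one-step}, and dropping the nonnegative terms $V_K$ and $\sum V$ give
\[
K\,G_\mu(\mathbf Z^{t+1},\Pi^t)\le \sup_{z}\Big\{\text{corrections}+\sum_k\big(\langle x,u^k\rangle_{\tau^{-1}}+\langle y,v^k\rangle_{\sigma^{-1}}\big)+\tfrac12\|z-z^0\|_V^2-\tfrac{K\mu}{2}\|z-\Pi^t\|_V^2\Big\}+\sum_k\varepsilon_k .
\]
We then take expectations.

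The $z$-dependent terms are controlled as follows.
\begin{itemize}
\item For the correction terms we apply Lemma~\ref{lem:correction-bound} with $z_1^\star=z_2^\star=\Pi^t$. This contributes $\alpha H(z^0,\Pi^t)+\beta\|z^K-\Pi^t\|_V^2+\beta\|z^0-\Pi^t\|_V^2+\frac{\beta}{2}\|z-\Pi^t\|_V^2$.
\item For the random linear terms we use Lemma~\ref{lem:random-linear-bound} with $\tilde z^0=z^0$. This contributes $\frac12\|z-z^0\|_V^2$ plus an expected term $\frac{1}{1-2\gamma_1^2}\|z^0-\Pi^t\|_V^2$.
\item We bound $\|z-z^0\|_V^2\le 2\|z-\Pi^t\|_V^2+2\|z^0-\Pi^t\|_V^2$.
\end{itemize}
The total coefficient of $\|z-\Pi^t\|_V^2$ is then $\frac{\beta}{2}+2-\frac{K\mu}{2}$. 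The condition $K\ge(4+2\beta)/\mu$ makes this nonpositive, so the supremum over the unbounded set $\mathcal Z$ is finite and that term can be dropped. Conditioning on $\mathcal F$ at the start of the epoch, so that $\Pi^t$ is fixed, and using \eqref{f 32} for $\mathbb E\|z^K-\Pi^t\|_V^2\le 2\|z^0-\Pi^t\|_V^2$, we get
\[
\mathbb E\,G_\mu(\mathbf Z^{t+1},\Pi^t)\le \frac1K\Big(\alpha\,\mathbb E H(\mathbf Z^t,\Pi^t)+c_1\,\mathbb E\,\dist_V(\mathbf Z^t,\mathcal Z^\star)^2\Big),
\]
where $c_1$ is roughly $5\beta+2+\frac{1}{1-2\gamma_1^2}$.

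The main obstacle is the term $H(\mathbf Z^t,\Pi^t)$. It is an unsmoothed gap evaluated at a saddle point, and we need to bound it by the smoothed gap from the previous epoch. Since the point $z=\Pi^t$ is feasible in the supremum defining $G_\mu(\mathbf Z^t,\Pi^{t-1})$,
\[
H(\mathbf Z^t,\Pi^t)\le G_\mu(\mathbf Z^t,\Pi^{t-1})+\tfrac{\mu}{2}\|\Pi^t-\Pi^{t-1}\|_V^2 .
\]
Nonexpansiveness of the projection gives $\|\Pi^t-\Pi^{t-1}\|_V\le\|\mathbf Z^t-\mathbf Z^{t-1}\|_V$, and this is at most $\dist_V(\mathbf Z^t,\mathcal Z^\star)+\dist_V(\mathbf Z^{t-1},\mathcal Z^\star)$. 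Here the projection must be taken in the $V$-metric. The factor $\mu$ appearing here is the reason for the $\alpha\zeta$-type term in \eqref{eq:K-condition-full}, and I expect some care will be needed to match its exact constants. The epoch $t=0$ is handled separately and absorbed into $M$.

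We then close the recursion using Assumption~\ref{ass:sqg-main}. It gives $\dist_V(\mathbf Z^t,\mathcal Z^\star)^2\le G_\mu(\mathbf Z^t,\Pi^{t-1})/\zeta$ for $t\ge1$, and this is where we need $\mathbf Z^t\in\mathcal R$. Set $a_t=\mathbb E\,G_\mu(\mathbf Z^t,\Pi^{t-1})$ and $d_t=\mathbb E\,\dist_V(\mathbf Z^t,\mathcal Z^\star)^2$. This yields a two-step recursion
\[
a_{t+1}\le \tfrac{1}{K}\big(c\,a_t+c'\,d_{t-1}\big),
\]
whose coefficients are of order $(\alpha\zeta+5\beta+2+\frac1{1-2\gamma_1^2})/(K\zeta)$ and $\beta/(K\zeta)$. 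The second and third lower bounds on $K$ make these coefficients at most about $1/100$ and $1/400$. Induction on the combined potential $a_t+\zeta d_{t-1}$ then gives a contraction factor of at most $1/5$, which is \eqref{eq:linear-gap}. Finally, \eqref{eq:linear-distance} follows from one more application of the quadratic growth inequality $\zeta d_t\le a_t$.
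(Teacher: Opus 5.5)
There is a genuine gap in how you control $\|\Pi^t-\Pi^{t-1}\|_V$. The inequality $\|\mathbf Z^t-\mathbf Z^{t-1}\|_V\le\dist_V(\mathbf Z^t,\mathcal Z^\star)+\dist_V(\mathbf Z^{t-1},\mathcal Z^\star)$ is false as soon as $\mathcal Z^\star$ is not a singleton: two far-apart points of $\mathcal Z^\star$ both have zero distance to it. Nonexpansiveness of the projection cannot repair this. What you need is the stability of the previous epoch. Apply Jensen and \eqref{f 32}, conditionally on $\mathbf Z^{t-1}$, to the epoch started at $\mathbf Z^{t-1}$ with $z^\star=\Pi^{t-1}$. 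This gives $\mathbb E\|\mathbf Z^t-\Pi^{t-1}\|_V^2\le 2\,\mathbb E\,\dist_V(\mathbf Z^{t-1},\mathcal Z^\star)^2$. Combined with $\|\Pi^t-\Pi^{t-1}\|_V^2\le 2\|\mathbf Z^t-\Pi^t\|_V^2+2\|\mathbf Z^t-\Pi^{t-1}\|_V^2$, it yields $\mathbb E\|\Pi^t-\Pi^{t-1}\|_V^2\le 2d_t+4d_{t-1}$.

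Even after this repair, your anchor shift $H(\mathbf Z^t,\Pi^t)\le G_\mu(\mathbf Z^t,\Pi^{t-1})+\frac{\mu}{2}\|\Pi^t-\Pi^{t-1}\|_V^2$ leaves coefficients of order $\alpha\mu/(K\zeta)$ in front of $a_t$ and $a_{t-1}$. Nothing in \eqref{eq:K-condition-full} controls $\mu/\zeta$. For a purely bilinear problem with non-unique saddle points, one computes $G_\mu(\bar z,z^\star)=\frac{1}{2\mu}\bigl(\|A\bar x\|_{\sigma Q}^2+\|A^\top\bar y\|_{\tau P}^2\bigr)$. So $\zeta$ is of order $1/\mu$, and your recursion would need $K$ of order $\alpha\mu^2$, whereas \eqref{eq:K-condition-full} grows only linearly in $\mu$. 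The $\alpha\zeta$ term in \eqref{eq:K-condition-full} is just $\alpha/K$ written over the common denominator $K\zeta$; it does not come from a factor $\mu$.

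The missing idea is to use the two-saddle-point form of Lemma~\ref{lem:correction-bound} with $z_1^\star=\Pi^{t-1}$ and $z_2^\star=\Pi^t$.
\begin{itemize}
\item The $\alpha$-term becomes $\alpha H(\mathbf Z^t,\Pi^{t-1})\le\alpha\,G_\mu(\mathbf Z^t,\Pi^{t-1})$ at no cost: take $z=\Pi^{t-1}$ in the supremum.
\item The mismatch is paid only through the cross term, $\frac{\beta}{2}\|z-z_1^\star\|_V^2\le\beta\|z-z_2^\star\|_V^2+\beta\|\Pi^t-\Pi^{t-1}\|_V^2$, which contributes $\frac{\beta}{K}(2d_t+4d_{t-1})$.
\end{itemize}
This is exactly where the $5\beta$ (rather than $3\beta$) and the condition $K\ge 400\beta/\zeta$ come from. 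Together with $\zeta d_t\le a_t$, it gives $a_{t+1}\le\frac{1}{100}(a_t+a_{t-1})$ for $t\ge 2$, and the $5^{-t}$ rate follows by induction.
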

\begin{remark}
Theorem~\ref{thm:linear-restarted} shows that DSPDHG achieves a geometric rate once
the smoothed quadratic growth condition holds. The restart mechanism converts the
ergodic $\mathcal O(1/K)$ behavior of the inner method into a linear decrease of the
smoothed gap and, consequently, of the distance to the saddle-point set.

The constants $\alpha$ and $\beta$ come from the one-epoch estimate and the correction
bounds. They are not needed to run the inner DSPDHG iterations; they only enter the
theoretical lower bound on the restart length $K$.
\end{remark}

The rest of this section presents the proof of Theorem~\ref{thm:linear-restarted}.

\subsubsection{A one-epoch estimate}

We next derive an estimate for one restart epoch. Throughout this subsection, we write
\[
Z^K:=(X^K,Y^K):=\frac1K\sum_{k=1}^K z^k,
\qquad
z^k=(x^k,y^k),
\]
where $Z^K$ denotes the ergodic output of one epoch, while $z^K$ denotes the last
inner iterate.

Summing the realized one-step inequality \eqref{eq:realized-one-step} from
$k=0$ to $K-1$, and using the convexity of
$H(\cdot,\cdot,x,y)$ in its first two arguments, we have that for any
$z=(x,y)\in\mathcal Z$,
\begin{align}\label{eq:epoch-start}
\begin{aligned}
& K H(X^K,Y^K,x,y) \\
\le\;&
D_g^{P^{-1}-I}(x^0;z)-D_g^{P^{-1}-I}(x^K;z) +
D_{f^*}^{Q^{-1}-I}(y^0;z)-D_{f^*}^{Q^{-1}-I}(y^K;z)
\\
&+
\sum_{k=0}^{K-1}
\left(
\langle x,u^k\rangle_{\tau^{-1}}
+
\langle y,v^k\rangle_{\sigma^{-1}}
+
\varepsilon_k
\right)+
V_0(x^0-x,y^0-y).
\end{aligned}
\end{align}
This inequality is the starting point for the linear convergence analysis.

The following lemma gives a one-epoch estimate for the smoothed gap.

\begin{lemma}\label{lem:epoch-smoothed-bound}
Suppose Assumptions~\ref{ass:basic} and \ref{ass:stepsizes-general} hold. Then, for any
$z_1^\star,z_2^\star\in\mathcal Z^\star$ and any $K$ satisfying
$\mu K\geq 2\beta+4$,
\begin{align}
\mathbb E\!\left[G_\mu(Z^K,z_2^\star)\right]
\le\;&
\frac{\alpha}{K}\,
\mathbb E\!\left[G_\mu(z^0,z_1^\star)\right]
+
\frac{\beta}{K}\,
\mathbb E\!\left[\|z_1^\star-z_2^\star\|_V^2\right]
\nonumber\\
&+
\frac{1}{K}
\left(
3\beta
+
2
+
\frac{1}{1-2\gamma_1^2}
\right)
\mathbb E\!\left[\|z^0-z_2^\star\|_V^2\right].
\label{eq:epoch-smoothed-bound}
\end{align}
\end{lemma}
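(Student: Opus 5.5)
Starting from the epoch inequality \eqref{eq:epoch-start}, we divide by $K$ and subtract $\frac{\mu}{2}\|z-z_2^\star\|_V^2$ from both sides. Then we take the supremum over $z\in\mathcal Z$ and finally take expectations. The martingale residuals $\sum_k\varepsilon_k$ do not depend on $z$, so they leave the supremum and vanish in expectation by Lemma~\ref{lem:realized-one-step}. Three groups of $z$-dependent terms remain, and we absorb each into the quadratic penalty $-\frac{\mu K}{2}\|z-z_2^\star\|_V^2$ (after multiplying through by $K$).

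\emph{Initial Lyapunov term.} Since $y^{-1}=y^0$, we have $V_0(x^0-x,y^0-y)=\frac12\|z-z^0\|_V^2\le \|z-z_2^\star\|_V^2+\|z^0-z_2^\star\|_V^2$.

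\emph{Random linear terms.} We apply \eqref{eq:uk-linear-bound}--\eqref{eq:vk-linear-bound} with $\tilde z^0=z_2^\star$, not $z^0$. This gives $\frac12\|z-z_2^\star\|_V^2$ plus a $z$-independent remainder. By \eqref{eq:uv-summability} and $\mathbb E[M_K]=0$, the remainder has expectation at most $\frac{1}{1-2\gamma_1^2}\mathbb E\|z^0-z_2^\star\|_V^2$, using $z^\star=z_2^\star$ in Lemma~\ref{lem:fundamental-recursion}. Alternatively, we take $\tilde z^0=z^0$ and split as above; the choice only shifts constants.

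\emph{Correction terms.} We use \eqref{eq:correction-bound} with the given pair $z_1^\star,z_2^\star$, which yields
\[
\alpha H(z^0,z_1^\star)+\beta\|z^K-z_2^\star\|_V^2+\beta\|z^0-z_2^\star\|_V^2+\tfrac{\beta}{2}\|z-z_1^\star\|_V^2 .
\]
We bound $\mathbb E\|z^K-z_2^\star\|_V^2\le 2\mathbb E\|z^0-z_2^\star\|_V^2$ by \eqref{f 32}, so the $\beta$ terms contribute $3\beta\,\mathbb E\|z^0-z_2^\star\|_V^2$. For the last piece we write $\frac{\beta}{2}\|z-z_1^\star\|_V^2\le \beta\|z-z_2^\star\|_V^2+\beta\|z_1^\star-z_2^\star\|_V^2$, which produces the $\beta\|z_1^\star-z_2^\star\|_V^2$ term in the statement.

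Collecting all $z$-dependent quadratics gives at most $(\beta+\frac32)\|z-z_2^\star\|_V^2$, or slightly more depending on how the split is done. The hypothesis $\mu K\ge 2\beta+4$, i.e.\ $\frac{\mu K}{2}\ge \beta+2$, makes the net coefficient of $\|z-z_2^\star\|_V^2$ nonpositive, so those terms drop out of the supremum. The main obstacle is the term $\alpha H(z^0,z_1^\star)$, which must become $\alpha G_\mu(z^0,z_1^\star)$. Since $z_1^\star$ is itself admissible in the supremum defining $G_\mu(z^0,z_1^\star)$ and the penalty vanishes at $\dot z=z_1^\star$, we get $H(z^0,z_1^\star)\le G_\mu(z^0,z_1^\star)$, so this step is immediate. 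The real delicacy is arranging the splits so that the $\|z^0-z_2^\star\|_V^2$ coefficients total exactly $3\beta+2+\frac{1}{1-2\gamma_1^2}$. The $2$ is budgeted from the $V_0$ term and the $\tilde z^0$ choice, and we will adjust Young's-inequality weights there if needed. Dividing by $K$ then gives \eqref{eq:epoch-smoothed-bound}.
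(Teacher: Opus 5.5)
Your proposal is correct and follows the paper's proof. It uses the same ingredients combined in the same way: the correction bound \eqref{eq:correction-bound} with the split $\frac{\beta}{2}\|z-z_1^\star\|_V^2\le\beta\|z-z_2^\star\|_V^2+\beta\|z_1^\star-z_2^\star\|_V^2$, the random-linear bounds together with \eqref{eq:uv-summability} and $\mathbb E[M_K]=0$, the bound \eqref{f 32} for $z^K$, and $H(z^0,z_1^\star)\le G_\mu(z^0,z_1^\star)$. The only deviations are minor: you take $\tilde z^0=z_2^\star$ and absorb $\|z-z^0\|_V^2$ by the triangle inequality rather than the paper's exact quadratic maximization; with your choices the coefficient comes out as $3\beta+1+\frac{1}{1-2\gamma_1^2}$, so no adjustment of Young's weights is needed to reach the stated constant.
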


\begin{proof}
Starting from \eqref{eq:epoch-start}, we use \eqref{eq:correction-bound} with two arbitrary saddle points $z_1^\star,z_2^\star$  and \eqref{eq:uk-linear-bound}, \eqref{eq:vk-linear-bound} with the auxiliary initialization
$\tilde x^0=x^0$ and $\tilde y^0=y^0$. This gives, for every
$z=(x,y)\in\mathcal Z$,
\begin{align}
 & K H(X^K,Y^K,x,y)  \nonumber\\
\le\;&
\alpha H(x^0,y^0,x_1^\star,y_1^\star)
+
\beta\|z^K-z_2^\star\|_V^2
+
\beta\|z^0-z_2^\star\|_V^2
+
\frac{\beta}{2}\|z-z_1^\star\|_V^2
+
\|z-z^0\|_V^2
\nonumber\\
&+
\frac12\sum_{k=0}^{K-1}
\left(
\|u^k\|_{\tau^{-1}P}^2
+
\|v^k\|_{\sigma^{-1}Q}^2
\right) +
\sum_{k=0}^{K-1}
\left(
\langle \tilde x^k,u^k\rangle_{\tau^{-1}}
+
\langle \tilde y^k,v^k\rangle_{\sigma^{-1}}
+
\varepsilon_k
\right).
\label{eq:epoch-intermediate}
\end{align}
The expectation of the last summation is zero by $\mathbb E[M_K]=0$ and $\mathbb E[\varepsilon_k=0]$. Dividing \eqref{eq:epoch-intermediate} by $K$ and subtracting
$\frac{\mu}{2}\|z-z_2^\star\|_V^2$ from both sides yields
\begin{align}
&H(X^K,Y^K,x,y)
-
\frac{\mu}{2}\|z-z_2^\star\|_V^2
\nonumber\\
\le\;&
\frac{\alpha}{K}H(x^0,y^0,x_1^\star,y_1^\star)
+
\frac{\beta}{K}\|z^K-z_2^\star\|_V^2
+
\frac{\beta}{K}\|z^0-z_2^\star\|_V^2
+
\frac{\beta}{2K}\|z-z_1^\star\|_V^2 \nonumber\\
&
+
\frac{1}{K}\|z-z^0\|_V^2
-
\frac{\mu}{2}\|z-z_2^\star\|_V^2
+
\frac{1}{2K}\sum_{k=0}^{K-1}
\left(
\|u^k\|_{\tau^{-1}P}^2
+
\|v^k\|_{\sigma^{-1}Q}^2
\right)
\nonumber\\
&+
\frac{1}{K}
\sum_{k=0}^{K-1}
\left(
\langle \tilde x^k,u^k\rangle_{\tau^{-1}}
+
\langle \tilde y^k,v^k\rangle_{\sigma^{-1}}
+
\varepsilon_k
\right).
\label{eq:epoch-smoothed-pre-sup}
\end{align}
Since $G_\mu(z^0,z_1^\star)=\sup_{z}\{H(x^0,y^0,x,y)-\frac{\mu}{2}\|z-z_1^\star\|_V^2\}$,
\[
H(x^0,y^0,x_1^\star,y_1^\star)
\le
G_\mu(z^0,z_1^\star).
\]

It remains to bound the quadratic terms involving $z$. Using
\[
\|z-z_1^\star\|_V^2
\le
2\|z-z_2^\star\|_V^2
+
2\|z_1^\star-z_2^\star\|_V^2,
\]
we have
\begin{align}
&\sup_{z\in\mathcal Z}
\left\{
\frac{\beta}{2K}\|z-z_1^\star\|_V^2
+
\frac{1}{K}\|z-z^0\|_V^2
-
\frac{\mu}{2}\|z-z_2^\star\|_V^2
\right\}
\nonumber\\
&\le
\frac{\beta}{K}\|z_1^\star-z_2^\star\|_V^2
+
\sup_{z\in\mathcal Z}
\left\{
-\left(\frac{\mu}{2}-\frac{\beta}{K}\right)\|z-z_2^\star\|_V^2
+
\frac{1}{K}\|z-z^0\|_V^2
\right\}.
\label{eq:quad-sup-1}
\end{align}
Since $\mu K>2\beta+2$, the quadratic function in the last supremum is concave.
The standard quadratic maximization formula gives
\begin{align}\label{eq:quad-sup-2}
& \sup_{z\in\mathcal Z}
\left\{
-\left(\frac{\mu}{2}-\frac{\beta}{K}\right)\|z-z_2^\star\|_V^2
+
\frac{1}{K}\|z-z^0\|_V^2
\right\}  \nonumber \\
\le \; &
\frac{\mu K-2\beta}{K(\mu K-2\beta-2)}
\|z^0-z_2^\star\|_V^2  \leq \frac{2}{K} \|z^0-z_2^\star\|_V^2 .
\end{align}
where the last inequality holds because of the assumption $\mu K\geq 2\beta+4$. Combining \eqref{eq:quad-sup-1} and \eqref{eq:quad-sup-2}, taking the
supremum over $z\in\mathcal Z$ in \eqref{eq:epoch-smoothed-pre-sup}, and  then using the fact $H(z^0,z_1^\star) \leq G_\mu(z^0,z_1^\star)$, we obtain
\begin{align}
G_\mu(Z^K,z_2^\star)
\le\;&
\frac{\alpha}{K}G_\mu(z^0,z_1^\star)
+
\frac{\beta}{K}\|z^K-z_2^\star\|_V^2
+
\frac{\beta}{K}\|z^0-z_2^\star\|_V^2
\nonumber\\
&+
\frac{\beta}{K}\|z_1^\star-z_2^\star\|_V^2
+
\frac{2}{K}
\|z^0-z_2^\star\|_V^2
+
\frac{1}{2K}\sum_{k=0}^{K-1}
\left(
\|u^k\|_{\tau^{-1}P}^2
+
\|v^k\|_{\sigma^{-1}Q}^2
\right)
\nonumber\\
&+
\frac{1}{K}
\sum_{k=0}^{K-1}
\left(
\langle \tilde x^k,u^k\rangle_{\tau^{-1}}
+
\langle \tilde y^k,v^k\rangle_{\sigma^{-1}}
+
\varepsilon_k
\right).
\label{eq:epoch-before-expectation}
\end{align}

Taking expectations in \eqref{eq:epoch-before-expectation}, the last summation
vanishes. Moreover, by the uniform second-moment bound \eqref{f 32},
\[
\mathbb E\|z^K-z_2^\star\|_V^2
\le
2\,\mathbb E\|z^0-z_2^\star\|_V^2,
\]
and by the square-summability estimation \eqref{eq:uv-summability},
\[
\mathbb E\!\left[
\sum_{k=0}^{K-1}
\left(
\|u^k\|_{\tau^{-1}P}^2
+
\|v^k\|_{\sigma^{-1}Q}^2
\right)
\right]
\le
\frac{2}{1-2\gamma_1^2}
\mathbb E\|z^0-z_2^\star\|_V^2 .
\]
Therefore,
\begin{align}
\mathbb E\!\left[G_\mu(Z^K,z_2^\star)\right]
\le\;&
\frac{\alpha}{K}
\mathbb E\!\left[G_\mu(z^0,z_1^\star)\right]
+
\frac{\beta}{K}
\mathbb E\!\left[\|z_1^\star-z_2^\star\|_V^2\right]
\nonumber\\
&+
\frac{1}{K}
\left(
3\beta
+
2
+
\frac{1}{1-2\gamma_1^2}
\right)
\mathbb E\!\left[\|z^0-z_2^\star\|_V^2\right],
\end{align}
which proves \eqref{eq:epoch-smoothed-bound}.
\end{proof}

\subsubsection{Inter-epoch recursion}

We now derive a recursion across consecutive restart epochs. 
\begin{lemma}
    Suppose Assumptions~\ref{ass:basic}, \ref{ass:stepsizes-general}, and
\ref{ass:sqg-main} hold. For each outer iterate,
define $\Pi^t:=\proj_{\mathcal Z^\star}(\mathbf Z^t).$
Then, for all $K$ satisfying
$\mu K\geq 2\beta+4$ and $t\ge 2$,
\begin{align}
\mathbb E\!\left[G_\mu(\mathbf Z^{t+1},\Pi^t)\right]
\le\;&
\frac{4\beta}{K\zeta}\,
\mathbb E\!\left[
G_\mu(\mathbf Z^{t-1},\Pi^{t-2})
\right]
\nonumber\\
&+
\frac{1}{K\zeta}
\left(
\alpha\zeta
+
5\beta
+
2
+
\frac{1}{1-2\gamma_1^2}
\right)
\mathbb E\!\left[
G_\mu(\mathbf Z^t,\Pi^{t-1})
\right].
\label{eq:second-order-recursion}
\end{align}
\end{lemma}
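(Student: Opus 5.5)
We apply Lemma~\ref{lem:epoch-smoothed-bound} to the epoch started at $z^0=\mathbf Z^t$, whose ergodic output is $Z^K=\mathbf Z^{t+1}$. The two saddle points are $z_2^\star=\Pi^t=\proj_{\mathcal Z^\star}(\mathbf Z^t)$ and $z_1^\star=\Pi^{t-1}$. Conditioning on the history up to the start of the epoch (so that $\mathbf Z^t,\Pi^t,\Pi^{t-1}$ are fixed, and the sampling sets of the new epoch are independent of them by Assumption~\ref{ass:stepsizes-general}), and then taking total expectation, the lemma gives
\begin{align*}
\mathbb E\!\left[G_\mu(\mathbf Z^{t+1},\Pi^t)\right]
\le\;&
\frac{\alpha}{K}\mathbb E\!\left[G_\mu(\mathbf Z^t,\Pi^{t-1})\right]
+\frac{\beta}{K}\mathbb E\|\Pi^{t-1}-\Pi^t\|_V^2\\
&+\frac1K\Bigl(3\beta+2+\tfrac{1}{1-2\gamma_1^2}\Bigr)\mathbb E\!\left[\dist_V(\mathbf Z^t,\mathcal Z^\star)^2\right].
\end{align*}
Here we used $\|\mathbf Z^t-\Pi^t\|_V=\dist_V(\mathbf Z^t,\mathcal Z^\star)$, with the projection taken in the $V$-metric, consistent with $\dist_V$.

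The remaining two quantities are converted into smoothed gaps via Assumption~\ref{ass:sqg-main}. Since $\mathbf Z^t\in\mathcal R$, quadratic growth applied with the saddle point $\Pi^{t-1}$ yields $\dist_V(\mathbf Z^t,\mathcal Z^\star)^2\le \zeta^{-1}G_\mu(\mathbf Z^t,\Pi^{t-1})$. For the projection difference, the triangle inequality gives
\[
\|\Pi^{t-1}-\Pi^t\|_V^2\le 2\|\Pi^{t-1}-\mathbf Z^{t-1}\|_V^2+2\|\mathbf Z^{t-1}-\mathbf Z^t\|_V^2 .
\]
A sharper route is to write
\[
\|\Pi^{t-1}-\Pi^t\|_V\le \|\Pi^{t-1}-\mathbf Z^t\|_V+\|\mathbf Z^t-\Pi^t\|_V\le \|\Pi^{t-1}-\mathbf Z^{t-1}\|_V+\|\mathbf Z^{t-1}-\mathbf Z^t\|_V+\dist_V(\mathbf Z^t,\mathcal Z^\star),
\]
but the target coefficients suggest the cleaner split
\[
\|\Pi^{t-1}-\Pi^t\|_V^2\le 2\,\dist_V(\mathbf Z^{t-1},\mathcal Z^\star)^2+2\,\dist_V(\mathbf Z^t,\mathcal Z^\star)^2 .
\]
This split is justified by projection geometry if $\mathcal Z^\star$ is convex: the saddle-point set is closed and convex, so the projection is nonexpansive. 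Concretely, we would bound $\|\Pi^{t-1}-\Pi^t\|_V\le\|\Pi^{t-1}-\mathbf Z^t\|_V+\|\mathbf Z^t-\Pi^t\|_V$.

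The main obstacle is controlling $\|\Pi^{t-1}-\mathbf Z^t\|_V$ by distances alone. We would handle it by a second application of quadratic growth: the smoothed gap $G_\mu(\mathbf Z^t,\Pi^{t-1})$ dominates $\zeta\|\mathbf Z^t-\Pi^{t-1}\|_V^2$ up to the projection. If that is unavailable, we would instead use $\|\Pi^{t-1}-\Pi^t\|_V\le\|\Pi^{t-1}-\mathbf Z^{t-1}\|_V+\|\mathbf Z^{t-1}-\Pi^t\|_V$ together with $\|\mathbf Z^{t-1}-\Pi^t\|_V$ bounds. Either way we aim for
\[
\mathbb E\|\Pi^{t-1}-\Pi^t\|_V^2\le 2\zeta^{-1}\Bigl(\mathbb E\, G_\mu(\mathbf Z^{t-1},\Pi^{t-2})+\mathbb E\, G_\mu(\mathbf Z^t,\Pi^{t-1})\Bigr),
\]
where $\mathbf Z^{t-1}\in\mathcal R$ and $t\ge2$ guarantees that $\Pi^{t-2}$ exists.

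Multiplying this bound by $\beta/K$ produces the coefficient $2\beta/(K\zeta)\le 4\beta/(K\zeta)$ on $G_\mu(\mathbf Z^{t-1},\Pi^{t-2})$. It also produces $2\beta/(K\zeta)$ on $G_\mu(\mathbf Z^t,\Pi^{t-1})$, which adds to $(3\beta+2+\tfrac1{1-2\gamma_1^2})/(K\zeta)$ and to $\alpha/K=\alpha\zeta/(K\zeta)$. The total is at most $(\alpha\zeta+5\beta+2+\tfrac{1}{1-2\gamma_1^2})/(K\zeta)$, which gives \eqref{eq:second-order-recursion}. The factor $4$ instead of $2$ leaves room for the less sharp triangle-inequality variant, in which the constant is $4$.
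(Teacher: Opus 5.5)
Your first step, applying Lemma~\ref{lem:epoch-smoothed-bound} with $z^0=\mathbf Z^t$, $z_2^\star=\Pi^t$, $z_1^\star=\Pi^{t-1}$, matches the paper. So does your conversion $\dist_V(\mathbf Z^t,\mathcal Z^\star)^2\le\zeta^{-1}G_\mu(\mathbf Z^t,\Pi^{t-1})$. The gap is the bound on $\mathbb E\|\Pi^t-\Pi^{t-1}\|_V^2$, and none of your routes works.

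Your ``cleaner split'' $\|\Pi^{t-1}-\Pi^t\|_V^2\le 2\dist_V(\mathbf Z^{t-1},\mathcal Z^\star)^2+2\dist_V(\mathbf Z^t,\mathcal Z^\star)^2$ is false as a geometric statement. If $\mathcal Z^\star$ is not a singleton and both points are distinct elements of $\mathcal Z^\star$, the right side is $0$ and the left side is not. Nonexpansiveness of the projection only gives $\|\Pi^{t-1}-\Pi^t\|_V\le\|\mathbf Z^{t-1}-\mathbf Z^t\|_V$, which leaves the movement between outer iterates uncontrolled.

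Your claim $G_\mu(\mathbf Z^t,\Pi^{t-1})\ge\zeta\|\mathbf Z^t-\Pi^{t-1}\|_V^2$ is not implied by Assumption~\ref{ass:sqg-main}, which controls only the distance to the set. For $\Phi\equiv 0$ one has $G_\mu\equiv 0$ and quadratic growth holds trivially, yet $\|\bar z-z^\star\|_V$ can be arbitrary. The discrepancy ``up to the projection'' is exactly $\|\Pi^t-\Pi^{t-1}\|_V$, so this route is circular. Your fallback via $\|\mathbf Z^{t-1}-\Pi^t\|_V$ needs the same movement across an epoch to be controlled, and you give no argument for it.

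The missing idea is that $\Pi^{t-1}$ and $\Pi^t$ can only be linked through the algorithm. $\mathbf Z^t$ is the average of the inner iterates $z^1,\dots,z^K$ of the epoch started at $\mathbf Z^{t-1}$, and $\Pi^{t-1}$ is a saddle point fixed at the start of that epoch. The argument then runs as follows.

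\begin{enumerate}
\item Split $\|\Pi^t-\Pi^{t-1}\|_V^2\le 2\|\mathbf Z^t-\Pi^t\|_V^2+2\|\mathbf Z^t-\Pi^{t-1}\|_V^2$.
\item Use Jensen and the stability bound \eqref{f 32}, applied conditionally on the start of the previous epoch with $z^\star=\Pi^{t-1}$. This gives $\mathbb E\|\mathbf Z^t-\Pi^{t-1}\|_V^2\le 2\,\mathbb E\,\dist_V(\mathbf Z^{t-1},\mathcal Z^\star)^2$.
\item Hence $\mathbb E\|\Pi^t-\Pi^{t-1}\|_V^2\le 2\,\mathbb E\,\dist_V(\mathbf Z^t,\mathcal Z^\star)^2+4\,\mathbb E\,\dist_V(\mathbf Z^{t-1},\mathcal Z^\star)^2$.
\item Apply quadratic growth at $(\mathbf Z^t,\Pi^{t-1})$ and at $(\mathbf Z^{t-1},\Pi^{t-2})$. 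This yields exactly the coefficients $4\beta/(K\zeta)$ and $(\alpha\zeta+2\beta+3\beta+2+\frac{1}{1-2\gamma_1^2})/(K\zeta)$.
\end{enumerate}

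So the factor $4$ is $2\times 2$ from this step, not slack. Your intermediate target, with coefficient $2/\zeta$ on both gaps, is not obtainable from the paper's tools, because \eqref{f 32} carries a factor $2$ rather than Fej\'er monotonicity.
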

\begin{proof}
    Applying Lemma~\ref{lem:epoch-smoothed-bound} to the epoch initialized at
$z^0=\mathbf Z^t$, with ergodic output $Z^K=\mathbf Z^{t+1}$, and choosing $z_2^\star=\Pi^t$ and $z_1^\star=\Pi^{t-1},$
we obtain, for all $t\ge 1$,
\begin{align}
&\mathbb E\!\left[G_\mu(\mathbf Z^{t+1},\Pi^t)\right] \nonumber\\
\le\;&
\frac{\alpha}{K}\,
\mathbb E\!\left[G_\mu(\mathbf Z^t,\Pi^{t-1})\right]
+
\frac{\beta}{K}\,
\mathbb E\!\left[\|\Pi^t-\Pi^{t-1}\|_V^2\right]
+
\frac{c_0(K)}{K}\,
\mathbb E\!\left[\dist_V(\mathbf Z^t,\mathcal Z^\star)^2\right],
\label{eq:pre-recursion}
\end{align}
where
\[
c_0(K)
:=
3\beta
+
2
+
\frac{1}{1-2\gamma_1^2}.
\]

It remains to control the distance between two consecutive projected saddle points.
Consider the previous epoch, initialized at $\mathbf Z^{t-1}$ and producing the
ergodic output $\mathbf Z^t$. By the triangle inequality,
\[
\|\Pi^t-\Pi^{t-1}\|_V^2
\le
2\|\mathbf Z^t-\Pi^t\|_V^2
+
2\|\mathbf Z^t-\Pi^{t-1}\|_V^2.
\]
Taking expectations gives
\begin{align}
\mathbb E\!\left[\|\Pi^t-\Pi^{t-1}\|_V^2\right]
\le\;&
2\,\mathbb E\!\left[\dist_V(\mathbf Z^t,\mathcal Z^\star)^2\right]
+
2\,\mathbb E\!\left[\|\mathbf Z^t-\Pi^{t-1}\|_V^2\right].
\label{eq:projection-diff-1}
\end{align}
Since $\mathbf Z^t$ is the ergodic average of the inner iterates of the previous
epoch, Jensen's inequality yields
\[
\|\mathbf Z^t-\Pi^{t-1}\|_V^2
\le
\frac1K\sum_{k=1}^K
\|z^k-\Pi^{t-1}\|_V^2,
\]
where $z^k$ denotes the inner iterates generated during the epoch initialized at
$\mathbf Z^{t-1}$. 
Applying the uniform bound in \eqref{f 32} to the previous
epoch with $z^\star=\Pi^{t-1}$ gives
\[
\mathbb E\!\left[\|z^k-\Pi^{t-1}\|_V^2\right]
\le
2\,\mathbb E\!\left[\|\mathbf Z^{t-1}-\Pi^{t-1}\|_V^2\right]
=
2\,\mathbb E\!\left[\dist_V(\mathbf Z^{t-1},\mathcal Z^\star)^2\right].
\]
Therefore,
\begin{align}
    \mathbb E\|\mathbf Z^t-\Pi^{t-1}\|_V^2 \leq 2\,\mathbb E\!\left[\dist_V(\mathbf Z^{t-1},\mathcal Z^\star)^2\right]
    \label{eq: average_bound_expectation}
\end{align}
Substituting this bound into \eqref{eq:projection-diff-1}, we obtain
\begin{align}
\mathbb E\!\left[\|\Pi^t-\Pi^{t-1}\|_V^2\right]
\le\;&
2\,\mathbb E\!\left[\dist_V(\mathbf Z^t,\mathcal Z^\star)^2\right]
+
4\,\mathbb E\!\left[\dist_V(\mathbf Z^{t-1},\mathcal Z^\star)^2\right].
\label{eq:projection-diff}
\end{align}
Combining \eqref{eq:pre-recursion} and \eqref{eq:projection-diff}, we get
\begin{align}
\mathbb E\!\left[G_\mu(\mathbf Z^{t+1},\Pi^t)\right]  
\le\;&
\frac{\alpha}{K}\,
\mathbb E\!\left[G_\mu(\mathbf Z^t,\Pi^{t-1})\right] \nonumber\\
&+
\frac{2\beta+c_0(K)}{K}\,
\mathbb E\!\left[\dist_V(\mathbf Z^t,\mathcal Z^\star)^2\right]
+
\frac{4\beta}{K}\,
\mathbb E\!\left[\dist_V(\mathbf Z^{t-1},\mathcal Z^\star)^2\right].
\label{eq:pre-second-order-recursion}
\end{align}
By Assumption~\ref{ass:sqg-main}, for any saddle point $z^\star\in\mathcal Z^\star$,
\[
\dist_V(\bar z,\mathcal Z^\star)^2
\le
\frac{1}{\zeta}G_\mu(\bar z,z^\star).
\]
Applying this inequality with $(\bar z,z^\star)=(\mathbf Z^t,\Pi^{t-1})$ and
$(\bar z,z^\star)=(\mathbf Z^{t-1},\Pi^{t-2})$, we obtain, for all $t\ge 2$,
\begin{align}
\mathbb E\!\left[G_\mu(\mathbf Z^{t+1},\Pi^t)\right]
\le\;&
\frac{4\beta}{K\zeta}\,
\mathbb E\!\left[
G_\mu(\mathbf Z^{t-1},\Pi^{t-2})
\right]
\nonumber\\
&+
\frac{1}{K\zeta}
\left(
\alpha\zeta
+
2\beta
+
c_0(K)
\right)
\mathbb E\!\left[
G_\mu(\mathbf Z^t,\Pi^{t-1})
\right].
\label{eq:second-order-recursion-pre}
\end{align}
Equivalently, using the definition of $c_0(K)$,
\begin{align}
\mathbb E\!\left[G_\mu(\mathbf Z^{t+1},\Pi^t)\right]
\le\;&
\frac{4\beta}{K\zeta}\,
\mathbb E\!\left[
G_\mu(\mathbf Z^{t-1},\Pi^{t-2})
\right]
\nonumber\\
&+
\frac{1}{K\zeta}
\left(
\alpha\zeta
+
5\beta
+
2
+
\frac{1}{1-2\gamma_1^2}
\right)
\mathbb E\!\left[
G_\mu(\mathbf Z^t,\Pi^{t-1})
\right],
\end{align}
for all $t\ge 2$.
\end{proof}

Now we are ready to prove theorem~\ref{thm:linear-restarted}.
\begin{proof}[Proof of Theorem~\ref{thm:linear-restarted}]
Define
\[
a_t
:=
\mathbb E\!\left[
G_\mu(\mathbf Z^t,\Pi^{t-1})
\right],\quad t\ge 1,\quad a_0:=  E\!\left[
G_\mu(\mathbf Z^0,\Pi^{0})
\right]
.
\]
By \eqref{eq:second-order-recursion}, the sequence $\{a_t\}_{t\ge 1}$ satisfies a
second-order linear recursion. To simplify the constants, define
\[
C
:=
\frac{1}{\zeta}
\max\left\{
\alpha\zeta+5\beta+2+\frac{1}{1-2\gamma_1^2},
\ 4\beta
\right\}.
\]
The definition of $C$, together withe the condition \eqref{eq:K-condition-full}, implies $K \geq 100C$,
and thus both coefficients in \eqref{eq:second-order-recursion} are bounded by
$1/100$. Hence
\begin{align}\label{eq:a-recursion}
a_{t+1}
\le
\frac{1}{100}(a_t+a_{t-1}),
\qquad t\ge 2.
\end{align}
Next, let $r:=1/5$, then $r^2=\frac{1}{25}
>
\frac{1+r}{100}.$ By choosing $M:=\max\{a_0,5a_1,25a_2\}$, we will show by induction that 
\[
a_t\le M\,r^{-t},
\qquad \forall t\ge 1.
\]
The claim holds for
$t=1,2$ by the definition of $M$. Suppose it holds for $t-1$ and $t$. Then
\[
a_{t+1}
\le
\frac{1}{100}(a_t+a_{t-1})
\le
\frac{M}{100}(r^t+r^{t-1})
=
Mr^{t-1}\frac{1+r}{100}
\le
Mr^{t+1},
\]
where the last inequality follows from $r^2> (1+r)/100$. This completes the
induction.

Since $\Pi^{t-1}\in\mathcal Z^\star$, Assumption~\ref{ass:sqg-main} gives
\[
\zeta\,\dist_V(\mathbf Z^t,\mathcal Z^\star)^2
\le
G_\mu(\mathbf Z^t,\Pi^{t-1}).
\]
Taking expectations and using \eqref{eq:linear-gap} yields
\eqref{eq:linear-distance}.
\end{proof}

Finally, as a consequence of the linear convergence result, we can show that the sequence \(\{\mathbf{Z}^t\}_{t\ge 0}\) remains bounded in expectation within a ball centered at \(\mathbf{Z}^0\). This provides further support for the assumption \(\mathbf{Z}^t\in\mathcal{R}\) for all \(t\ge 0\) in Assumption~\ref{ass:sqg-main}, and also yields an estimate of the radius of \(\mathcal{R}\).

\begin{proposition}\label{prop}
   Under the same conditions as  theorem~\ref{thm:linear-restarted}, we have
   \begin{align}
       \mathbb E\left\|\mathbf{Z}^N-\mathbf{Z}^0\right\|_V^2 \leq \frac{20M}{\zeta}, \quad \forall N \ge 0.
   \end{align}
\end{proposition}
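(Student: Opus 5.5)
We bound the total displacement by a telescoping sum of the per-epoch displacements, each controlled by the distance to the saddle-point set, which decays geometrically by Theorem~\ref{thm:linear-restarted}.

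\textbf{Step 1 (one-epoch displacement).} For each $t\ge 0$, consider the epoch started at $\mathbf Z^t$ and the saddle point $\Pi^t=\proj_{\mathcal Z^\star}(\mathbf Z^t)$. By Jensen's inequality and \eqref{f 32} applied with $z^\star=\Pi^t$, exactly as in \eqref{eq: average_bound_expectation},
\[
\mathbb E\|\mathbf Z^{t+1}-\Pi^t\|_V^2\le 2\,\mathbb E\,\dist_V(\mathbf Z^t,\mathcal Z^\star)^2 .
\]
With $d_t:=\big(\mathbb E\,\dist_V(\mathbf Z^t,\mathcal Z^\star)^2\big)^{1/2}$, Minkowski's inequality in $L^2$ gives
\[
\big(\mathbb E\|\mathbf Z^{t+1}-\mathbf Z^t\|_V^2\big)^{1/2}\le (1+\sqrt2)\,d_t .
\]

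\textbf{Step 2 (geometric bounds on $d_t$).} For $t\ge1$, \eqref{eq:linear-distance} gives $d_t^2\le (M/\zeta)5^{-t}$. For $t=0$ we need $d_0^2\le a_0/\zeta$, where $a_0=\mathbb E\,G_\mu(\mathbf Z^0,\Pi^0)\le M$ by the choice $M=\max\{a_0,5a_1,25a_2\}$ in the proof of the theorem. This follows from Assumption~\ref{ass:sqg-main}, since $\mathbf Z^0\in\mathcal R$. Hence $d_t\le \sqrt{M/\zeta}\,5^{-t/2}$ for all $t\ge0$.

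\textbf{Step 3 (summation).} By Minkowski's inequality again,
\[
\big(\mathbb E\|\mathbf Z^N-\mathbf Z^0\|_V^2\big)^{1/2}\le (1+\sqrt2)\sqrt{M/\zeta}\sum_{t\ge0}5^{-t/2}
=(1+\sqrt2)\frac{\sqrt5}{\sqrt5-1}\sqrt{M/\zeta}.
\]
Squaring gives the constant $(1+\sqrt2)^2\cdot\frac{5}{(\sqrt5-1)^2}\approx 5.83\times 3.27\approx 19.1\le 20$, which is the claim.

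\textbf{Main obstacle.} The only delicate point is the constant bookkeeping. One must use Minkowski's inequality rather than the crude bound $\|a+b\|^2\le 2\|a\|^2+2\|b\|^2$ in both steps. One must also use the sharp factor $2$ from \eqref{f 32}, and correctly justify the $t=0$ term through $a_0\le M$. If that justification fails, the fallback is to bound the $t=0$ contribution separately and absorb it into a slightly larger constant. The case $N=0$ is trivial.
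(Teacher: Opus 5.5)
Your proof is correct and follows essentially the paper's argument. Both telescope over epochs, bound each epoch's displacement by $\mathbf Z^{t+1}-\Pi^t$ (Jensen plus the factor-$2$ second-moment bound) and $\Pi^t-\mathbf Z^t$, and invoke the geometric decay from Theorem~\ref{thm:linear-restarted}, with the $t=0$ term handled through $a_0\le M$ exactly as the paper implicitly does. The only difference is how the sum is squared: the paper uses the pathwise weighted Cauchy--Schwarz bound $\|\mathbf Z^N-\mathbf Z^0\|_V^2\le\sum_{t=1}^N 2^t\|\mathbf Z^t-\mathbf Z^{t-1}\|_V^2$ with the crude per-epoch factor $6$, which already lands exactly on $20$, so your remark that Minkowski is needed in both steps is overcautious.
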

\begin{proof}
According to Cauchy-Schwarz inequality, we have
\begin{align*}
    \|\mathbf{Z}^N-\mathbf{Z}^0\|_V^2 \leq \left( \sum_{t=1}^N \|\mathbf{Z}^t-\mathbf{Z}^{t-1}\|_V \right)^2 & \leq \left( \sum_{t=1}^N 2^{-t} \right) \left( \sum_{t=1}^N 2^t\|\mathbf{Z}^t-\mathbf{Z}^{t-1}\|_V^2 \right) \\
    & \leq \sum_{t=1}^N 2^t\|\mathbf{Z}^t-\mathbf{Z}^{t-1}\|_V^2.
\end{align*}
Besides, for any $t \ge 1$, 
    \begin{align}
        \begin{aligned}
            \mathbb E\|\mathbf{Z}^t-\mathbf{Z}^{t-1}\|_V^2 
            &\leq 2\mathbb E\|\mathbf{Z}^t-\Pi^{t-1}\|_V^2 +2\mathbb E\left[\dist_V (\mathbf{Z}^{t-1},\mathcal{Z}^\star)^2\right]\\
            &\leq 6 \mathbb E\left[\dist_V (\mathbf{Z}^{t-1},\mathcal{Z}^\star)^2\right] \leq \frac{6M}{\zeta}5^{-(t-1)},
        \end{aligned}
    \end{align}
where the second inequality comes from \eqref{eq: average_bound_expectation} and the third inequality comes from \eqref{eq:linear-distance}. Hence, we have
\begin{align*}
    \mathbb E \|\mathbf{Z}^N-\mathbf{Z}^0\|_V^2 \leq \sum_{t=1}^N 2^t \mathbb E\|\mathbf{Z}^t-\mathbf{Z}^{t-1}\|_V^2 \leq \sum_{t=1}^N \frac{12M}{\zeta} \left(\frac{2}{5}\right)^{t-1} \leq \frac{20M}{\zeta}.
\end{align*}
\end{proof}

\section{Numerical Experiments}\label{sec:numerics}

In this section, we illustrate the practical performance of DSPDHG and its restarted
variant RDSPDHG on two classes of convex problems: soft-margin SVM under entrywise sampling and model predictive control (MPC) under blockwise sampling. Our experiments serve two
purposes. First, they verify the sublinear convergence behavior predicted by the theory
for general convex problems. Second, they demonstrate the effectiveness of restarting
on problem instances that exhibit smoothed duality-gap quadratic growth.

Throughout this section, we compare deterministic PDHG, stochastic PDHG, doubly
stochastic PDHG, and their restarted variants. We use uniform sampling on both primal
and dual blocks. 
All methods are initialized from the same starting point. The step sizes are chosen according to the uniform rule
\[
\sigma = 0.99\,\frac{q}{\Lambda_{r,s}},
\qquad
\tau = 0.99\,\frac{p}{\Lambda_{r,s}},
\]
where $\Lambda_{r,s}$ is defined in \eqref{eq:Lambda_rs} with $r = nq$ and $s=pm$. For SPDHG, we take $p=1$,
and for PDHG we take $p=q=1$, in which case the rule reduces to the standard PDHG
choice $\sigma=\tau=\frac{0.99}{\|A\|}.$ 
For the restarted methods, we adopt an adaptive restart rule based on the relative KKT
residual, following \cite{lu2025practical, huang2024restarted}: a restart is triggered whenever
\begin{align}
    \mathrm{relKKT}(z^k)\le 0.8\,\mathrm{relKKT}(z^0).
    \label{eq:relKKT}
\end{align}
This criterion is simple to implement and proved robust in our experiments.
The relative error measures the gap between the objective value at the current iterate
and the optimal objective value computed by the commercial solver COPT
\cite{ge2022cardinal}.

\subsection{Soft-margin SVM}
\begin{figure}[t]
    \centering
    \begin{minipage}[b]{0.32\textwidth}
        \centering
        \includegraphics[width=\textwidth]{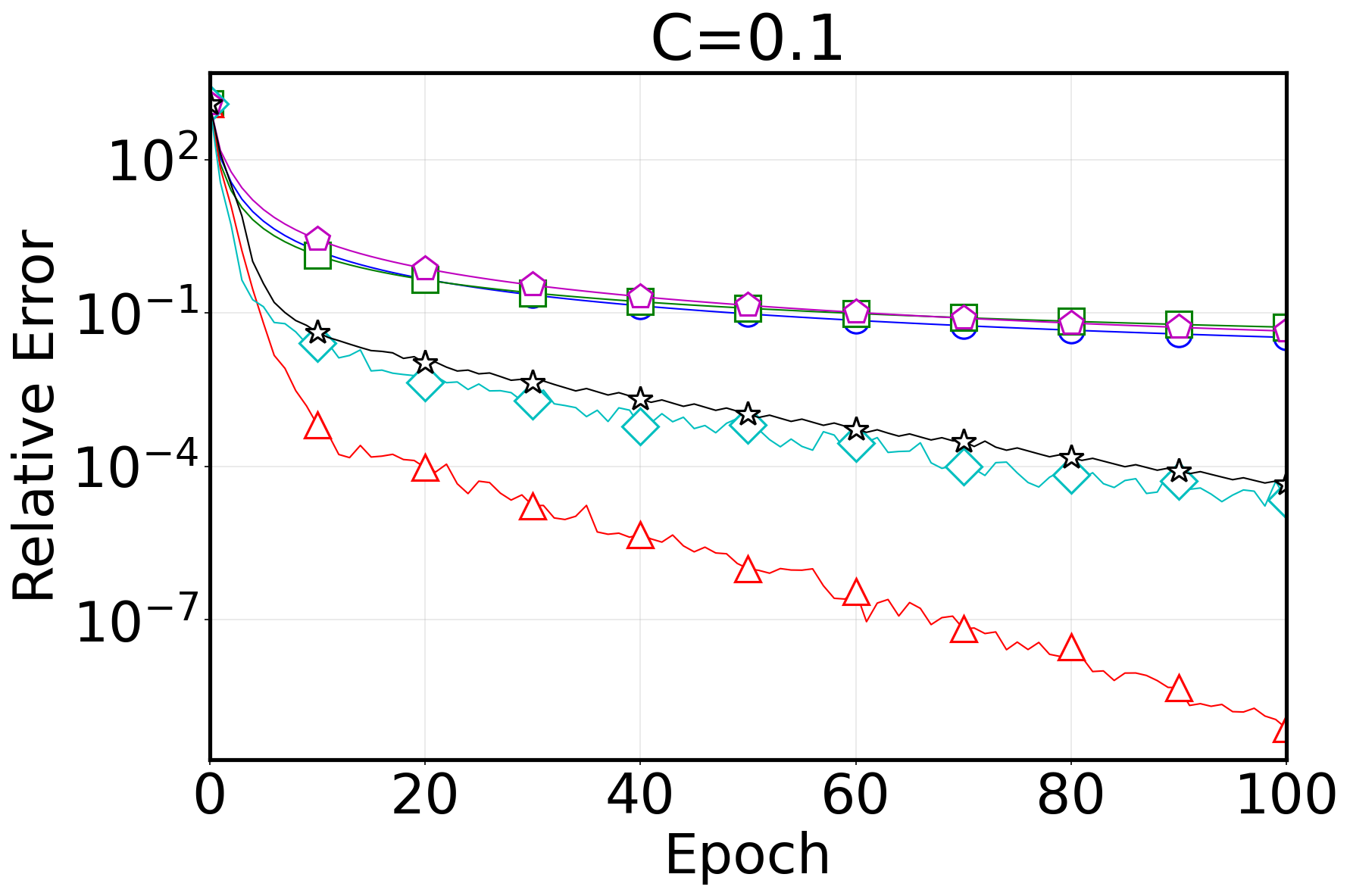}
    \end{minipage}\hfill
    \begin{minipage}[b]{0.32\textwidth}
        \centering
        \includegraphics[width=\textwidth]{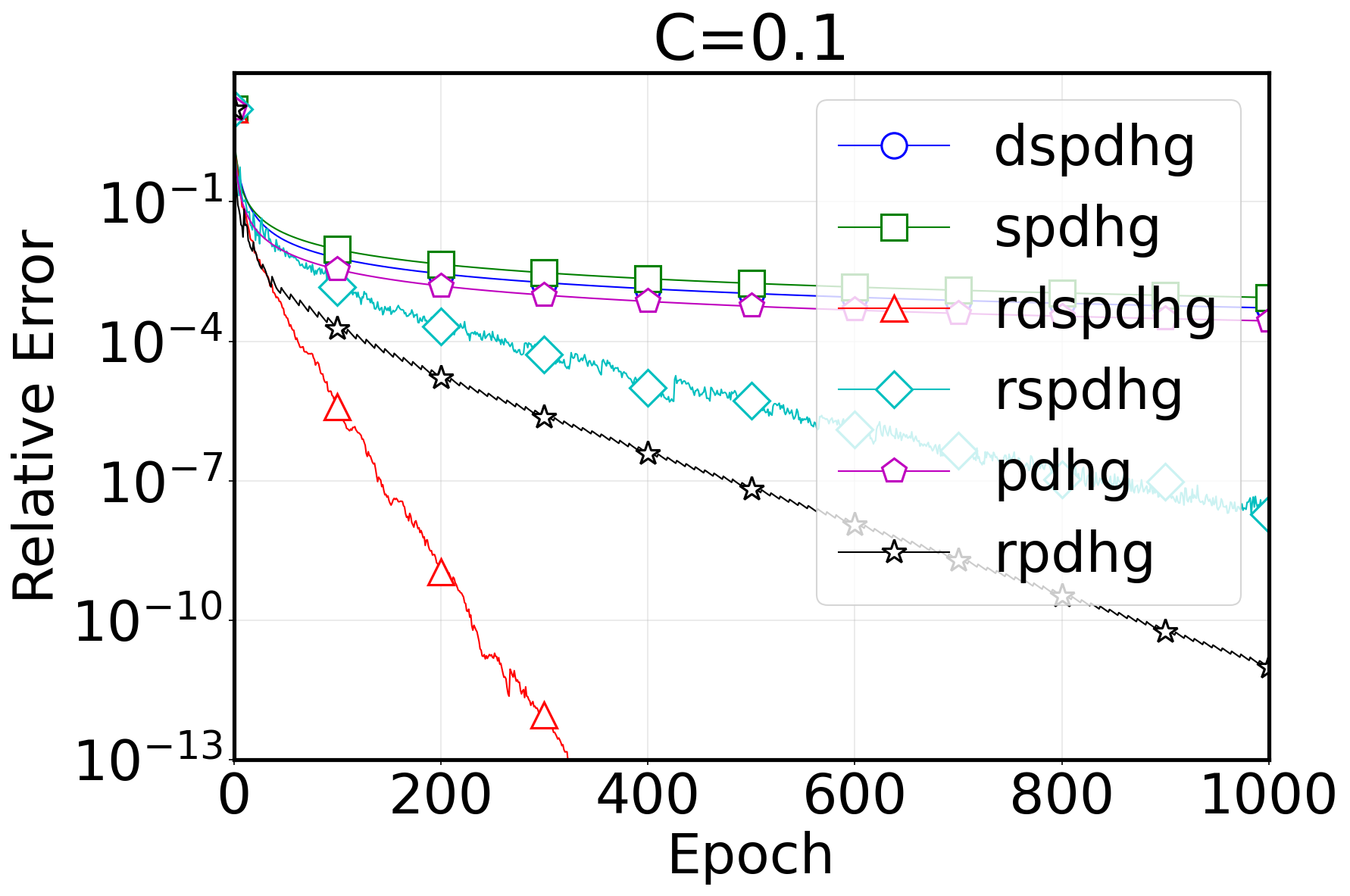}
    \end{minipage}\hfill
    \begin{minipage}[b]{0.32\textwidth}
        \centering
        \includegraphics[width=\textwidth]{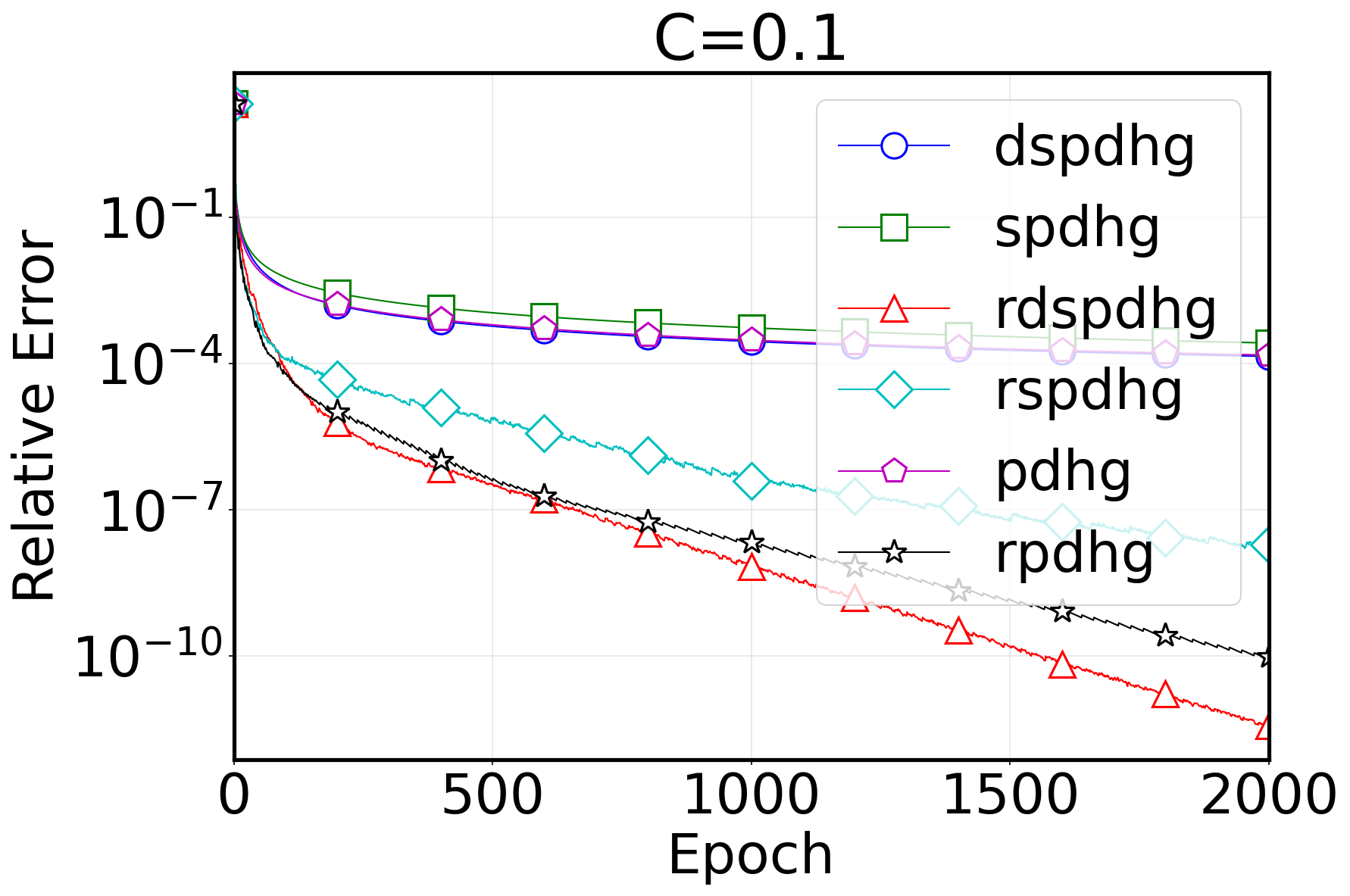}
    \end{minipage}

    \vspace{1em} 

    \begin{minipage}[b]{0.32\textwidth}
        \centering
        \includegraphics[width=\textwidth]{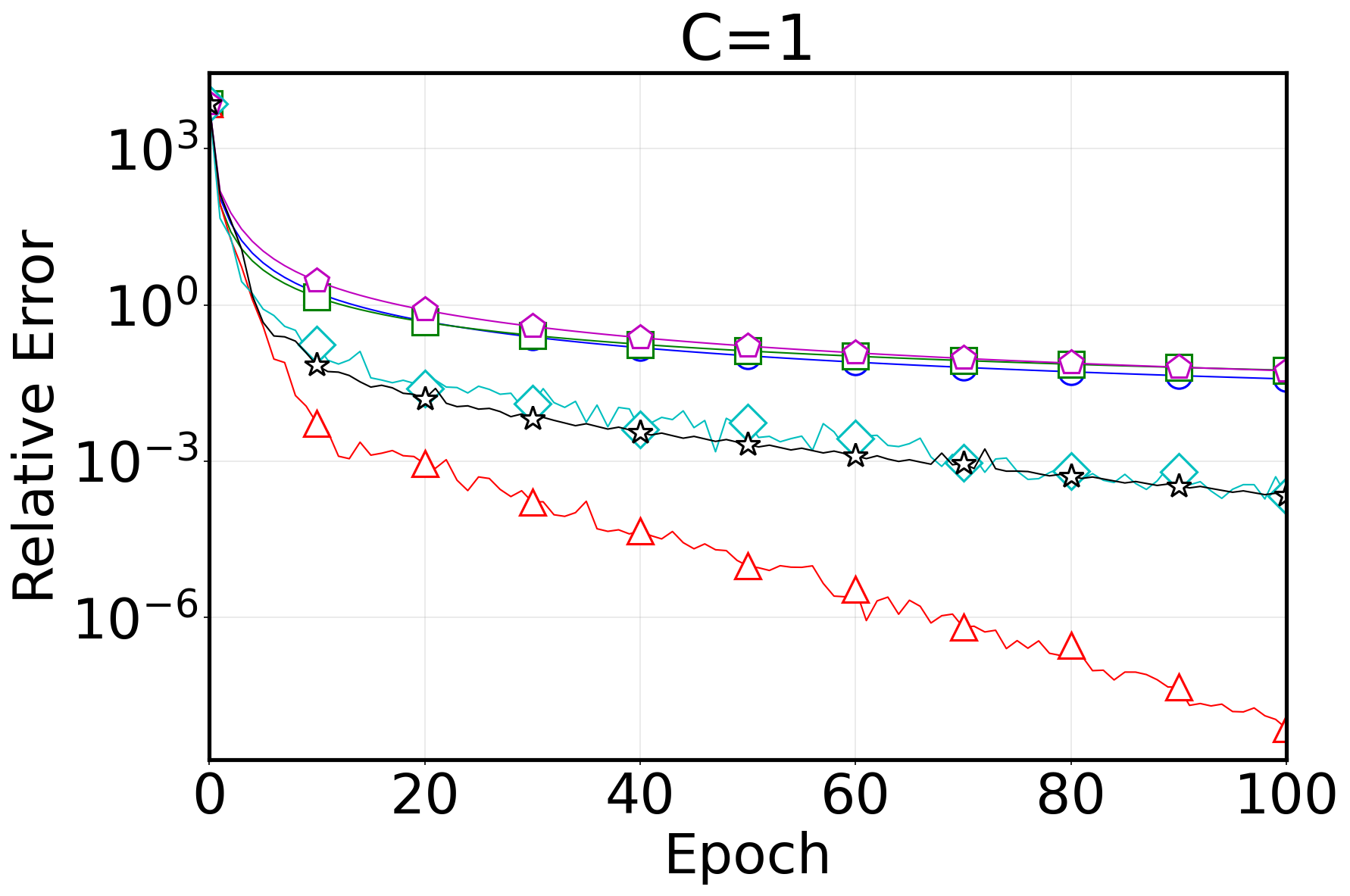}
    \end{minipage}\hfill
    \begin{minipage}[b]{0.32\textwidth}
        \centering
        \includegraphics[width=\textwidth]{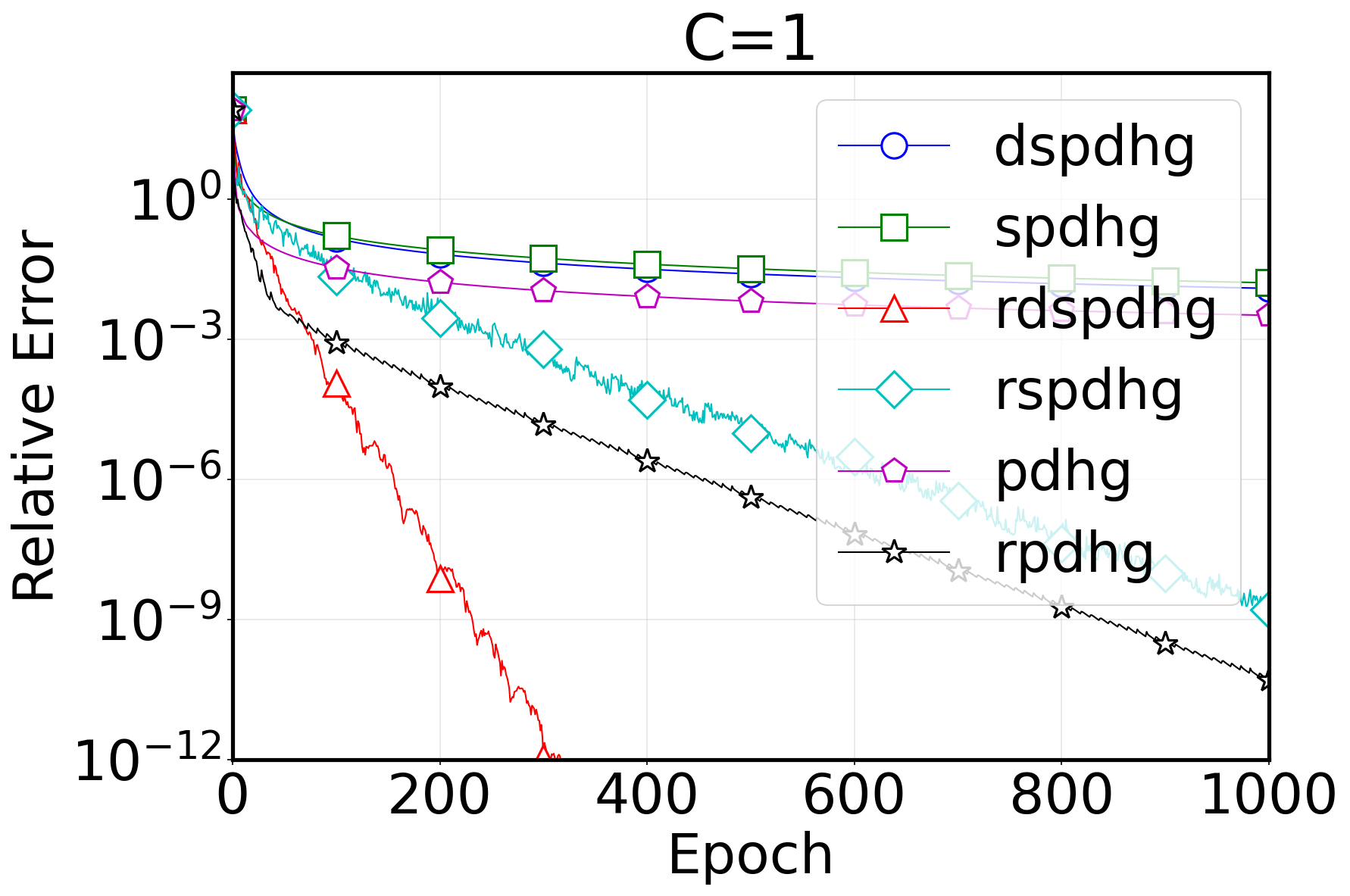}
    \end{minipage}\hfill
    \begin{minipage}[b]{0.32\textwidth}
        \centering
        \includegraphics[width=\textwidth]{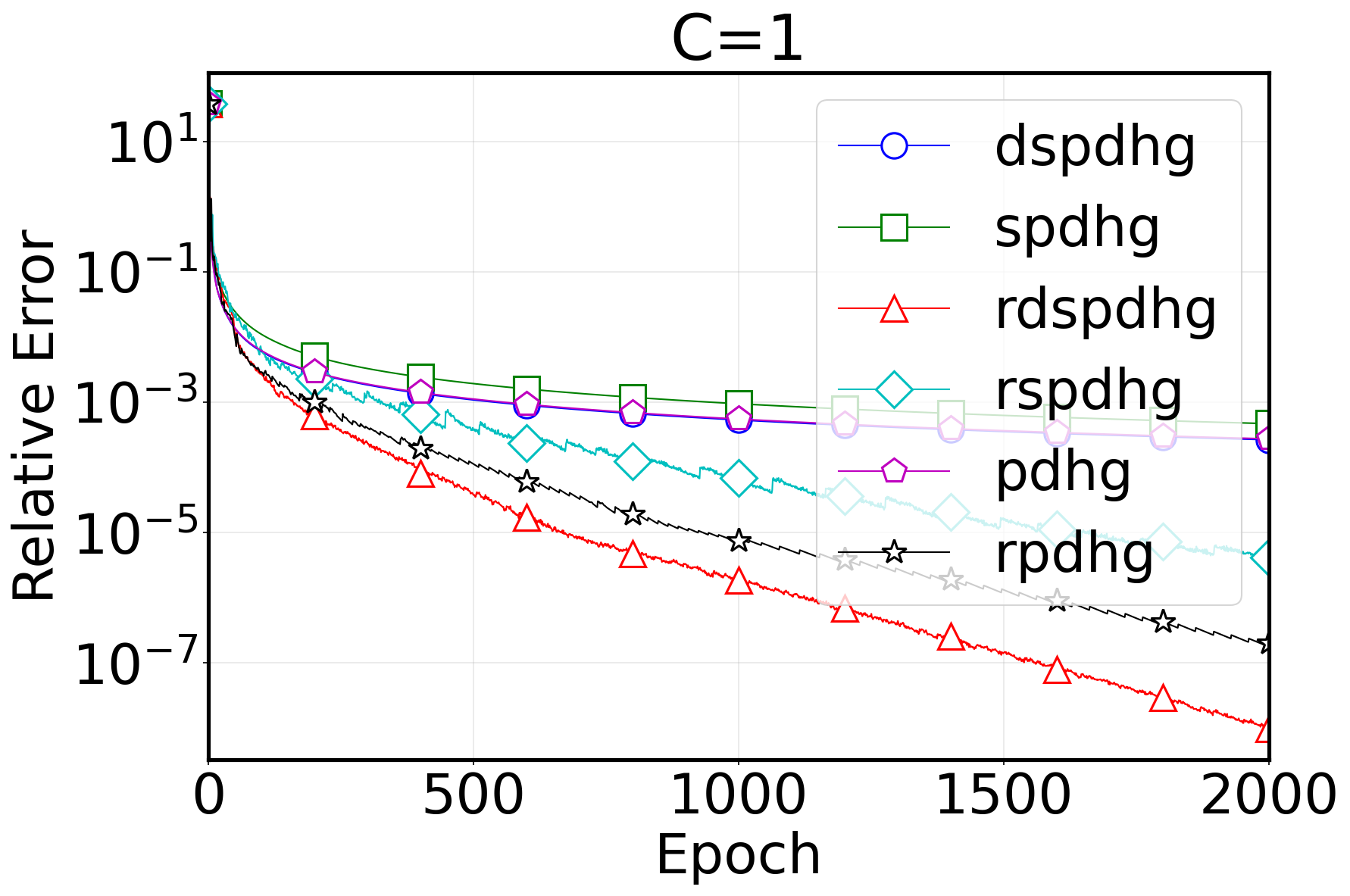}
    \end{minipage}
\caption{Relative error for soft-margin SVM. First column: colon-cancer ($n=62, m=2000$) with  $p=0.3, q=0.15$; second column: mushrooms ($n=m=100$) with $p=q=0.1$; third column: w1a ($n=2477, m=300$) with $p=0.2, q=0.4$.}
    \label{fig:svm}
\end{figure}

We first consider the soft-margin support vector machine problem
\begin{align}
    \min_{w,d}\quad \frac{1}{2}\|w\|^2
    + C\sum_{i=1}^n \max\bigl(0,\,1-b_i(w^\top a_i+d)\bigr),
    \label{eq:svm_problem}
\end{align}
where $w\in\mathbb{R}^m$, $d\in\mathbb{R}$, $a_i\in\mathbb{R}^m$ is the feature vector
of the $i$th sample, and $b_i\in\{-1,+1\}$ is the corresponding label. The constant
$C>0$ is the penalty parameter.

\paragraph{Reformulation}
Problem \eqref{eq:svm_problem} can be written in the form of
\eqref{eq:primal-problem} by introducing the variable $x=(w,d)\in\mathbb{R}^{m+1},$
and the matrix $A\in\mathbb{R}^{n\times (m+1)}$ whose $i$th row is $A_i=(b_i a_i^\top,\; b_i).$
We define
\[
f_i(z)=C\max(0,1-z),\; i=1,\dots,n, \quad \text{and} \quad g_j(x_j)=
\begin{cases}
\frac12 x_j^2, & j=1,\dots,m,\\
0, & j=m+1.
\end{cases}
\]
Then \eqref{eq:svm_problem} takes the block-separable form
\eqref{eq:primal-problem}. The Fenchel conjugate of $f_i$ and its proximal mapping are
\[
f_i^*(y)=
\begin{cases}
y, & y\in[-C,0],\\
+\infty, & \text{otherwise},
\end{cases} \quad \text{and} \quad \prox_{\sigma f_i^*}(x)=
\begin{cases}
-C, & x-\sigma<-C,\\
x-\sigma, & -C\le x-\sigma\le 0,\\
0, & x-\sigma>0.
\end{cases}
\]
Moreover,
\[
\prox_{\tau g_j}(x)=
\begin{cases}
\dfrac{x}{1+\tau}, & j=1,\dots,m,\\
x, & j=m+1.
\end{cases}
\]

\paragraph{Experimental setup} We use several benchmark datasets from LIBSVM~\cite{LibSVM}. The primal variable is
split coordinatewise, while the dual variable is naturally partitioned by samples. 
The penalty parameter $C$
and the dataset statistics are reported together with the corresponding convergence plots
in Figure~\ref{fig:svm}.

\subsection{Model predictive control}
We next consider the following model predictive control (MPC) problem:
\begin{align}
\begin{aligned}
\min_{\{x_t,u_t\}}\quad
& x_T^\top H_T x_T + \sum_{t=0}^{T-1}\bigl(x_t^\top H_t x_t + u_t^\top R_t u_t\bigr) \\
\text{s.t.}\quad
& x_{t+1}=Ax_t+Bu_t,\qquad t=0,1,\cdots, T-1,\\
& x_{t+1}\in\mathcal X,\quad u_t\in\mathcal U,\qquad t=0,1,\cdots, T-1,\\
& x_0=x_{\mathrm{init}},
\end{aligned}
\label{eq:oc_problem}
\end{align}
where $x_t\in\mathbb{R}^{n_x}$ and $u_t\in\mathbb{R}^{n_u}$ denote the state and control
variables, respectively. The box constraints are given by
\[
\mathcal X=\{x\in\mathbb{R}^{n_x}:-\bar x\le x\le \bar x\},
\qquad
\mathcal U=\{u\in\mathbb{R}^{n_u}:-\bar u\le u\le \bar u\}.
\]
The horizon length is $T$, and $x_{\mathrm{init}}\in\mathbb{R}^{n_x}$ is the initial state.
The matrices $H_t\in\mathbb{S}_+^{n_x}$, $R_t\in\mathbb{S}_+^{n_u}$, and
$H_T\in\mathbb{S}_+^{n_x}$ define the stage and terminal costs.

\paragraph{Reformulation}
Introducing dual variables $y_t$ for the dynamics constraints $x_{t+1}-Ax_t-Bu_t=0,\, t=0,\dots,T-1, $
yields the saddle-point formulation associated with the Lagrangian
\[
\min_{x_t\in\mathcal X,\;u_t\in\mathcal U}
\quad
x_T^\top H_T x_T
+\sum_{t=0}^{T-1}\bigl(x_t^\top H_t x_t+u_t^\top R_t u_t\bigr)
+\sum_{t=0}^{T-1} y_t^\top(x_{t+1}-Ax_t-Bu_t).
\]
This problem is naturally block structured across time stages, making it well suited
for stochastic primal--dual block updates.

\paragraph{Experimental setup}
We generate synthetic instances with $n_x=n_u=20$. The system matrix is chosen as $A=0.5I+\Delta, \; \Delta_{ij}\sim\mathcal N(0,0.1^2),$
and we keep only stable realizations whose eigenvalues lie strictly inside the unit circle.
The input matrix is generated from $B_{ij}\sim\mathcal N(0,1).$
The state cost matrix is $H_0 = \cdots = H_T=\operatorname{diag}(h)$, where $h_i\sim\mathcal U(0,10)$
and $70\%$ of the entries of $h$ are nonzero, and  $R_0 = \cdots = R_{T-1} =0.1I$.
The box constraints are generated from
$\bar x_i\sim\mathcal U(1,2),\,
\bar u_i\sim\mathcal U(0,0.1),$
and the initial state is sampled as $x_{\mathrm{init}}\sim \mathcal U(-0.5\bar x,\,0.5\bar x).$
We report results for horizons $T=20,50,100$. The resulting
convergence curves are reported in Figure~\ref{fig:oc}.

\begin{figure}[H]
    \centering
    \begin{subfigure}[b]{0.32\textwidth}
        \centering
        \includegraphics[width=\textwidth]{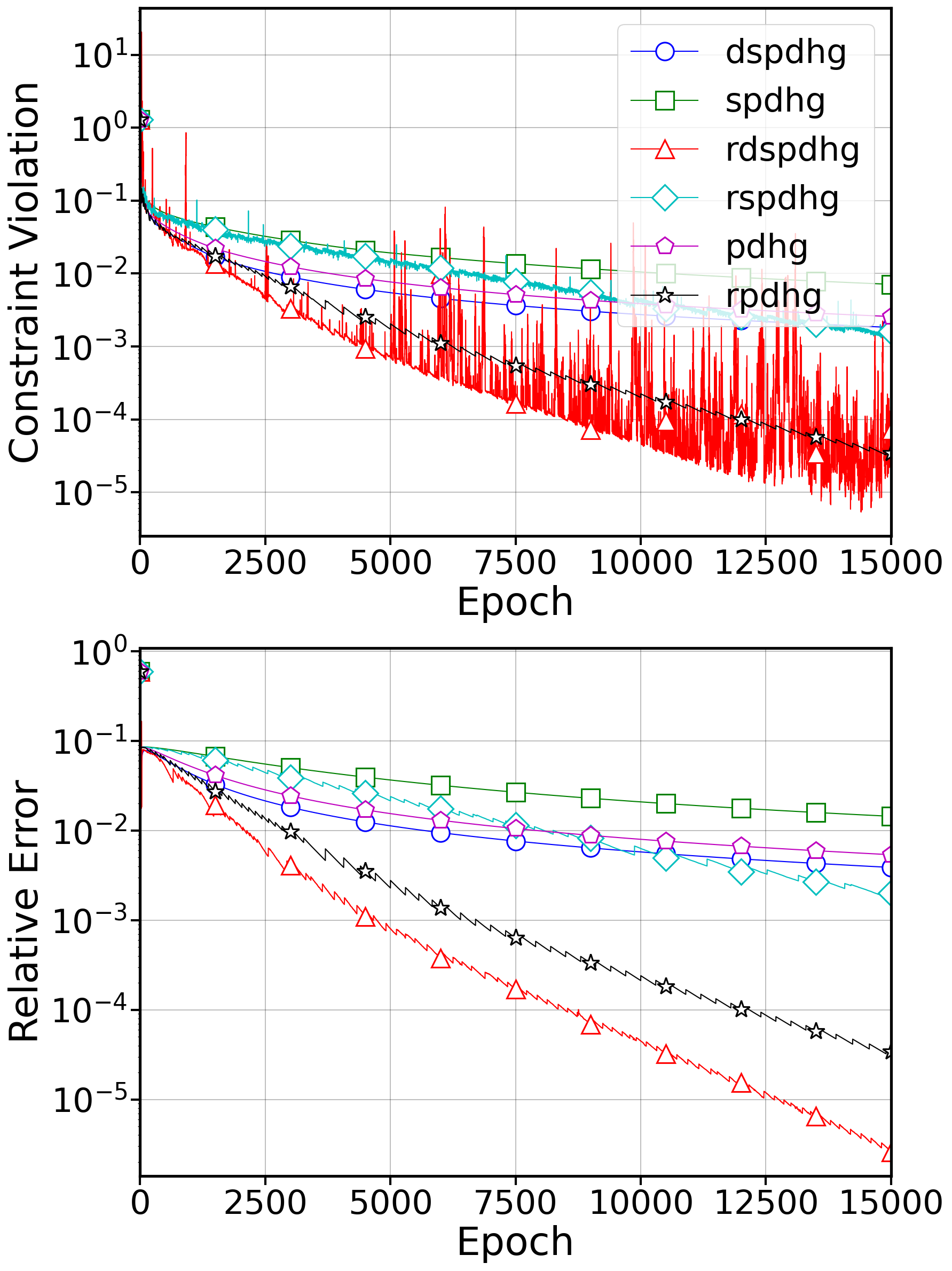}
        \label{fig:T20}
    \end{subfigure}
    \hfill 
    \begin{subfigure}[b]{0.32\textwidth}
        \centering
        \includegraphics[width=\textwidth]{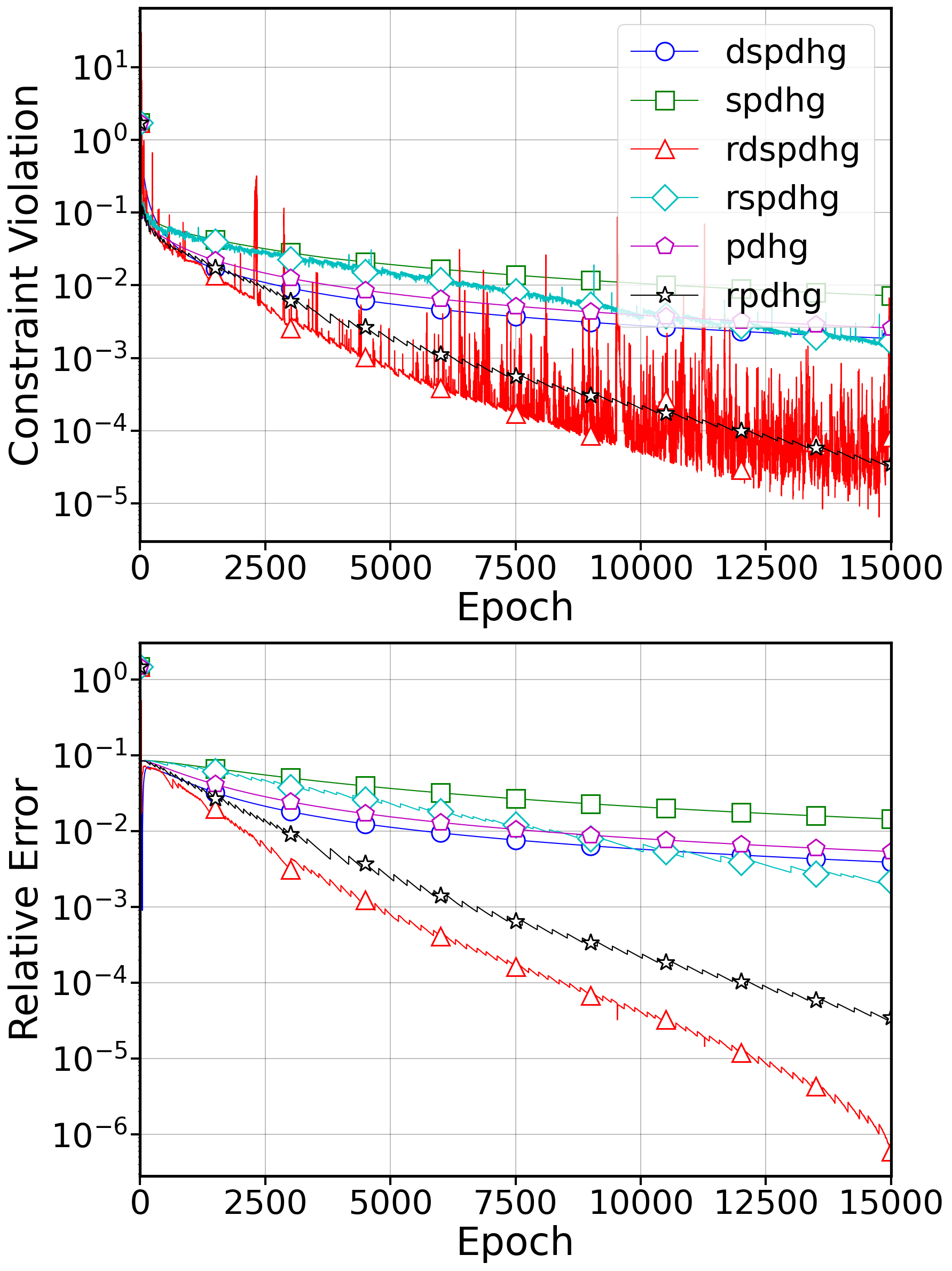}
        \label{fig:T50}
    \end{subfigure}
    \hfill
    \begin{subfigure}[b]{0.32\textwidth}
        \centering
        \includegraphics[width=\textwidth]{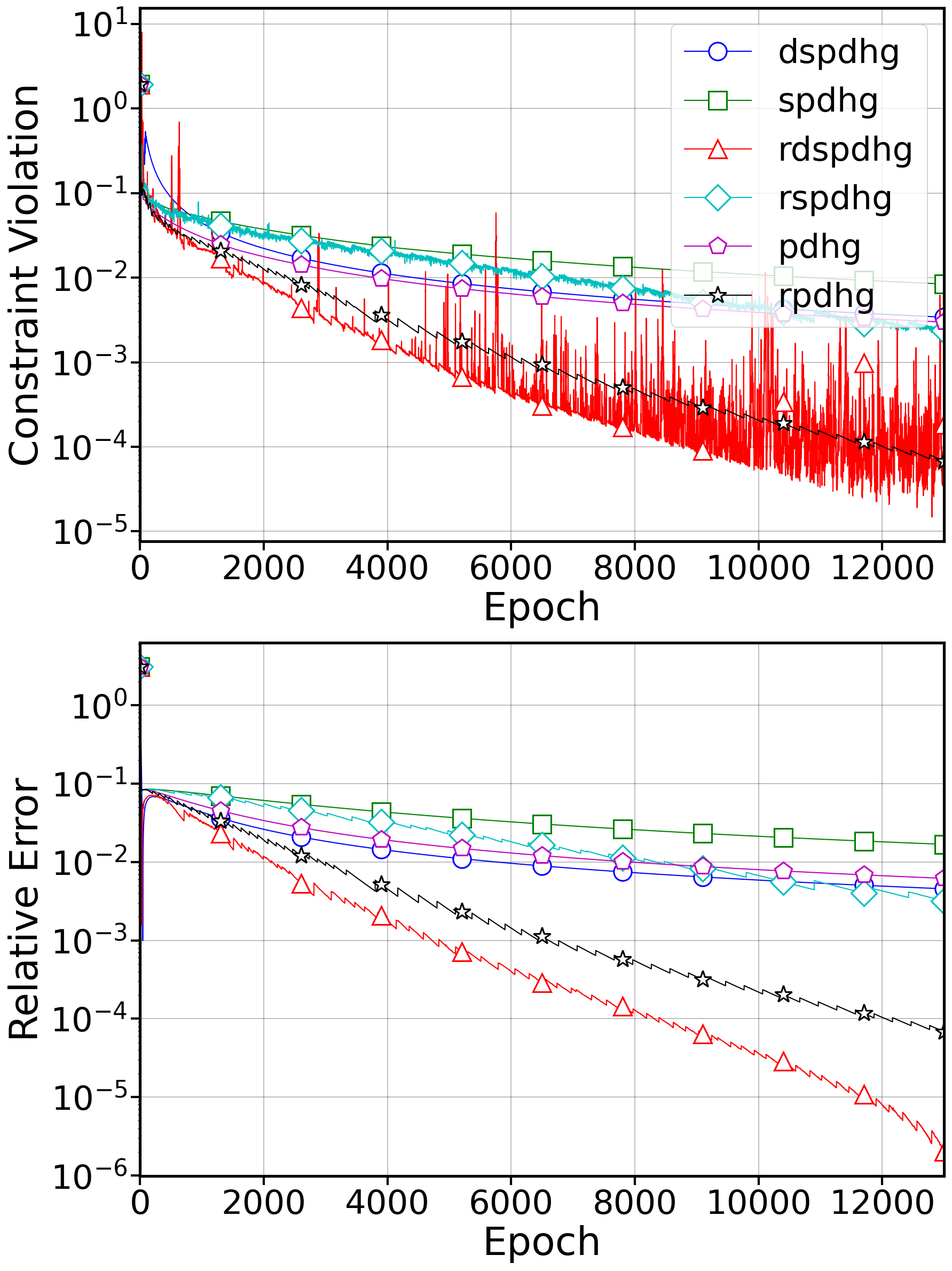}
        \label{fig:T100}
    \end{subfigure}
    \caption{Constraint violation and relative error for MPC problems with $n_x = n_u = 20$ and $p=0.1, q = 0.5$. First column: $T=20$; second column: $T=50$; third column: $T=100$.}
    \label{fig:oc}
\end{figure}

\paragraph{Overall observations}
As shown in both figures, the methods without restarting exhibit the expected sublinear convergence behavior, while restarting can significantly accelerate convergence when the local problem geometry is favorable. This is fully consistent with the theoretical results established in the previous sections. More importantly, across all tested instances, restarted DSPDHG delivers the best overall performance among the methods under comparison.

\section{Conclusion}

In this paper, we proposed DSPDHG, a doubly stochastic primal dual hybrid gradient
method for block-structured convex saddle-point problems, together with its restarted
variant RDSPDHG. By allowing randomized updates on both the primal and dual sides,
the method extends classical PDHG and one-sided stochastic PDHG in a unified manner.
For general convex problems, we established an $\mathcal{O}(1/K)$ ergodic convergence
rate for the expected restricted primal--dual gap. Under a smoothed quadratic growth
condition, we further proved linear convergence of the restarted scheme. Numerical
experiments on soft-margin SVM and model predictive control demonstrate that the
proposed methods are practically effective, and that restarting can substantially improve
performance. These results suggest that doubly stochastic primal--dual updates provide
a promising approach for solving large-scale structured convex optimization problems.

\bibliographystyle{plain}
\bibliography{references}






\end{document}